\documentclass[11pt,a4paper,twoside,reqno]{amsart}

%% packages to be used
%
\usepackage[all]{xy}
\usepackage{graphicx}
\usepackage{epsfig, %psfig,
pstricks} %, curves}
\usepackage{float}
\usepackage{amsfonts}
\usepackage{amssymb}

\usepackage{mathrsfs}

\usepackage{fontenc}
\usepackage{amsmath}
\usepackage{latexsym}
\usepackage{enumerate}

\flushbottom

\newcommand{\G}{{\mathcal G}}
\newcommand{\PP}{{\mathcal P}}
\newcommand{\N}{{\mathbb N}}

\newcommand{\Z}{{\mathbb Z}}
\newcommand{\R}{{\mathbb R}}

\newcommand{\RP}{{\mathbb RP}}

\newcommand{\FF}{\mathcal{F}}

\newcommand{\uu}{\mathfrak{u}}
\newcommand{\vv}{\mathfrak{v}}
\newcommand{\nnabla}{{\scriptscriptstyle \nabla}}
\newcommand{\nnablao}{{\scriptscriptstyle \nabla_0}}

\newcommand{\inj}{{\rm inj}}

\newcommand{\vol}{{\rm vol}}

\newcommand{\id}{{\rm id}}
\newcommand{\length}{{\rm length}}

\newcommand{\Diff}{{\rm Diff}}

\newcommand{\cf}{{\it cf.}}
\newcommand{\ie}{{\it i.e.}}

\numberwithin{equation}{section}

\newtheorem{theorem}{Theorem}[section]
\newtheorem{proposition}[theorem]{Proposition}
\newtheorem{corollary}[theorem]{Corollary}
\newtheorem{lemma}[theorem]{Lemma}

\theoremstyle{definition}
\newtheorem{definition}[theorem]{Definition}

\newtheorem{remark}[theorem]{Remark}

\long\def\forget#1\forgotten{} %

\title[Retraction of the space of Zoll Finsler projective planes]{Strong deformation retraction of the space of Zoll Finsler projective planes}

\author[S.~Sabourau]{St\'ephane Sabourau}

\address{Universit\'e Paris-Est,
Laboratoire d'Analyse et Math\'ematiques Appliqu\'ees (UMR 8050), 
UPEC, UPEMLV, CNRS, F-94010, Cr\'eteil, France}

\email{stephane.sabourau@u-pec.fr}

%\date{\today}

\subjclass[2010]{Primary 53D25; Secondary 53C20, 53C44}

\keywords{Zoll metrics, Finsler metrics, strong deformation retraction, geodesic flow, curvature flow, Crofton formula}

\thanks{Partially supported by the ANR grant Finsler} 

\begin{document}

\begin{abstract}
We show that the infinite-dimensional space of Zoll Finsler metrics on the projective plane strongly deformation retracts to the canonical round metric.
In particular, this space of Zoll Finsler metrics is connected.
Moreover, the strong deformation retraction arises from a deformation of the geodesic flow of every Zoll Finsler projective plane to the geodesic flow of the round metric through a family of smooth free circle actions induced by the curvature flow of the canonical round projective plane.
This construction provides a description of the geodesics of the Zoll Finsler metrics along the retraction.
\end{abstract}

\maketitle

%\tableofcontents

\section{Introduction}

A \emph{Zoll metric} on a closed manifold~$M$ is a Riemannian or (reversible, quadratically convex) Finsler metric all of whose geodesics are simple closed curves of the same length.
We refer to the classical reference~\cite{besse} for an introduction to the subject and historical comments, see also~\cite{berger}.
Zoll manifolds have finite fundamental groups.
Thus, in the two-dimensional case, they are diffeomorphic to either the sphere or the projective plane.
Note that the orientable double cover of a Zoll Finsler projective plane is a Zoll Finsler two-sphere, \cf~Proposition~\ref{prop:equiv} or Remark~\ref{rem:lift}.

\medskip

The canonical round metric on the two-sphere or the projective plane is a Zoll Riemannian metric.
However, there exist Zoll Riemannian two-spheres which are not round; some are rotationally symmetric, \cf~\cite[\S4]{besse}, while others have no symmetry at all, \cf~\cite[Corollary~4.71]{besse}.
Actually, Zoll Riemannian metrics on the two-sphere (modulo isometries and rescaling) form an infinite-dimensional space.
Contrariwise, a Riemannian metric on the projective plane is a Zoll metric if and only if it has constant curvature, which follows from Green's theorem, \cf~\cite[Theorem~5.59]{besse}, since the orientable double cover of a Zoll projective plane is a Blaschke sphere.
This also holds true in higher dimension, \cf~\cite[Appendix~D]{besse}.
However, this rigidity result fails in the Finsler case.
Indeed, Zoll Finsler metrics on the projective plane (modulo isometries and rescaling) form an infinite-dimensional space, \cf~Appendix.

\medskip

The goal of this article is to study the space of Zoll Finsler metrics on the projective plane and the dynamics of their geodesic flow.
More specifically, one can ask the following question about the topology of such space: 

\medskip

\noindent \emph{Is the space of all Zoll Finsler metrics on any closed manifold connected (when nonempty)?} 

\medskip

In the Riemannian case, this is a famous question whose answer is only known for the projective plane: the canonical round metric is the only Zoll Riemannian metric on the projective plane modulo isometries and rescaling.
Even on the two-sphere, the question is wide open, \cf~\cite[Question~200]{berger}.
(Observe that an approach through the Ricci flow does not work as shown in~\cite{jane}.)
Now, in the Finsler case, Zoll metrics are much more flexible (as forementionned, their moduli space -- if nonempty -- is always infinite-dimensional, \cf~Appendix) and the question still makes sense.

\medskip

Our main result shows that the topology of the space of Zoll Finsler metrics on the projective plane is homotopically trivial.
This provides the first (positive) answer to the question above for Zoll Finsler metrics on the projective plane.

\begin{theorem} \label{theo:retractrp2}
The space of Zoll Finsler metrics on the projective plane whose geodesic length is equal to~$\pi$ strongly deformation retracts to the canonical round metric on the projective plane.

Furthermore, this strong deformation retraction is induced by the curvature flow on the canonical round projective plane.
\end{theorem}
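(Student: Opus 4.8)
The plan is to stop thinking of a Zoll Finsler metric as a metric and start thinking of it as its geodesic flow, and then to realize the whole space as a bundle with contractible fibers over a contractible base of free circle actions, contracting the base by the curvature flow. Concretely, a Zoll Finsler metric $F$ on $\RP^2$ with geodesic length $\pi$ has geodesic flow on the unit tangent bundle $U\RP^2$ which is periodic of period $\pi$ with all orbits closed, hence defines a smooth free $S^1$-action; its quotient is the space of oriented geodesics $\G$, a surface diffeomorphic to the two-sphere (for the round metric this is the Hopf-type action with $\G$ the sphere of poles, and Proposition~\ref{prop:equiv} lets me read everything off on the orientable double cover when convenient). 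Thus the assignment $F\mapsto(\text{free }S^1\text{-action on }U\RP^2)$ is the organizing object, and the theorem's clause about deforming the geodesic flow through free circle actions is precisely a statement about a canonical path in the space of such actions. The deformation then splits into two ingredients: straightening the geodesic foliation to that of the round metric, and normalizing the remaining integral-geometric datum.

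For the second ingredient I would use the Crofton formula. Once the geodesics of $F$ are the projective lines, integral geometry (Busemann--Pogorelov--\'Alvarez Paiva) identifies $F$ with a smooth positive measure $\mu$ on $\G$ via
\[
\length_F(c)\;=\;\int_{\G}\#(c\cap\ell)\,d\mu(\ell).
\]
Because any two distinct projective lines meet in exactly one point and the diagonal is $\mu$-negligible, every geodesic $\ell_0$ has $\length_F(\ell_0)=\M(\mu)$; so the Zoll length condition is the single linear constraint $\M(\mu)=\pi$, and the round metric corresponds to the $\SO(3)$-invariant measure $\mu_0$. Since a convex combination of reversible, quadratically convex norms is again such a norm, the admissible measures form a convex set on which positivity is an open condition, and the straight-line homotopy $\mu_t=(1-t)\mu+t\mu_0$ stays inside it and fixes $\mu_0$. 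This contracts the fiber and pins down the round metric as the retract.

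For the first ingredient — contracting the base of foliations — I would invoke the curvature flow of the canonical round $\RP^2$. The geodesics of $F$ form a two-parameter family of non-contractible simple closed curves meeting pairwise in a single point; flowing each leaf by the curve-shortening (curvature) flow of the round metric, Grayson's theorem guarantees convergence to a closed round geodesic, i.e.\ a projective line, while the pairwise intersection number is monotone and hence preserved. Reparametrizing the flow on $[0,1]$ yields a path of foliations of $\RP^2$, hence of free $S^1$-actions on $U\RP^2$, running from the geodesic flow of $F$ to that of the round metric, and this path is stationary when $F$ is already round. Concatenating this base contraction with the fiberwise convex homotopy of the preceding paragraph, and checking that the two glue continuously, produces a strong deformation retraction which is induced by the curvature flow, as required.

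The main obstacle will be the analytic control of the curvature-flow step: I must show that for every intermediate time the flowed curves still constitute an embedded foliation whose leaves are simple, pairwise meet exactly once, and arise as the geodesics of a genuine quadratically convex reversible Finsler metric, so that the path never leaves the space; equivalently, that the induced $S^1$-actions remain free and smooth throughout, converge to the round action, and carry a fiberwise positive Crofton density along the way. Ruling out singularity formation, loss of embeddedness, or topology change before each leaf reaches its limiting projective line, and obtaining the uniform convergence needed to splice the curvature-flow homotopy to the linear Crofton homotopy into a single continuous strong deformation retraction, is the crux of the argument.
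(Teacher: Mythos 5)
Your outline reproduces the paper's strategy almost exactly: deform the geodesic flow (viewed as a free circle action on the unit tangent bundle) to the round one by applying the curvature flow of the canonical round metric to the geodesics, reconstruct Finsler metrics from the intermediate circle actions via the \'Alvarez Paiva--Berck Crofton construction (Theorem~\ref{theo:AB}), and finish with a linear interpolation of the resulting measure on the space of geodesics toward the round one --- this last step being exactly the paper's extension of $F_\tau$ for $1 \leq \tau \leq 2$, needed because the flow limit $F_1$ has round geodesics but a possibly non-round Crofton density $|\lambda_1|$. The only structural difference is that you work directly on $\RP^2$, whereas the paper lifts to the orientable double cover and proves the retraction for balanced Zoll two-spheres (Theorem~\ref{theo:retractS2}), descending at the end by antipodal equivariance; these are essentially equivalent, since a noncontractible simple curve in $\RP^2$ lifts to an antipodally symmetric, hence balanced, curve in $S^2$, which is what rules out collapse to a point and (via Gage's theorem, not Grayson's alone, which only gives curvature decay and does not by itself exclude oscillation between geodesics) yields convergence to an equator.

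However, what you label ``the main obstacle'' is a genuine gap, and it is the mathematical core of the paper. Knowing that the flowed curves remain embedded, pairwise intersect the correct number of times, and converge to projective lines does not by itself produce smooth free $S^1$-actions on the unit tangent bundle: the map $\Psi_t$ sending a unit vector $v$ to the tangent vector of the flowed curve $\gamma_v^t$ could a priori fail to be injective or immersive, e.g.\ if two distinct flowed curves, or infinitesimally close members of a one-parameter family of them, became tangent at positive time; then distinct orbits of the would-be action would collide. The non-infinitesimal intersection control you cite is not sufficient; one needs its infinitesimal counterpart. The paper supplies this in two steps: first, every nontrivial normal Jacobi field of a Zoll Finsler surface has exactly two zeros, both simple (Theorem~\ref{theo:LM}, after LeBrun--Mason); second, differentiating the graph equation of the curvature flow with respect to the variation parameter gives a linear parabolic PDE for the normal variation field, to which Angenent's zero-counting theorem (Theorem~\ref{theo:ang-crelle}) applies: the number of zeros is non-increasing and strictly drops at any multiple zero, so it remains equal to two with simple zeros for all time (Lemma~\ref{lem:2}), which is precisely injectivity of $d\Psi_t$ (Proposition~\ref{prop:immersion}). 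Then $\Psi_t$, being a proper, $\pi_1$-injective local diffeomorphism of a compact manifold, is a diffeomorphism (Theorem~\ref{theo:diffeo}), and the desired family of free smooth actions exists and converges to the round geodesic flow (Theorem~\ref{theo:flowS2}). Without this Jacobi-field/parabolic-zero argument --- or the equivalent input from Hsu's work cited in the paper --- your path of circle actions is not known to exist, so the proposal as written is an accurate plan rather than a proof.
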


We emphasize that the deformation retraction is not given by some abstract existence theorem but proceeds from a natural geometric construction relying on the curvature flow of the canonical round metric, \cf~Theorem~\ref{theo:flowrp2}.
In particular, this implies that the geodesics of the Zoll Finsler metrics~$F_\tau$ along the deformation retraction from a given Zoll Finsler metric~$F$ are obtained by applying the curvature flow of the canonical round metric to the geodesics of the given Zoll Finsler metric~$F$.
The construction of the deformation retraction in Theorem~\ref{theo:retractrp2} is fairly concrete.
It relies on Theorem~\ref{theo:flowrp2} below (and the material developed in the first part of the article) and a construction of Finsler metrics through the Crofton formula due to \'Alvarez Paiva and Berck~\cite{AB}.
This approach would carry over to the case of Zoll Finsler metric on the two-sphere if one could deform the geodesics of these metrics to the equators of the canonical round sphere while preserving their intersection pattern.

\medskip

The following result is a straightforward consequence of Theorem~\ref{theo:retractrp2}.
It immediately follows from a construction of~\cite{wein75} (see also~\cite[Appendix~B]{guillemin} for a more explicit statement) relying on Moser's trick.

\begin{corollary} \label{coro:conj}
%Let $(M,F)$ be a Zoll Finsler metric on the projective plane.
%Denote by~$(F_\tau)$ the family of Zoll Finsler metrics given by the retraction to~$g_0$ constructed in Theorem~\ref{theo:retractrp2}.
Let $(F_\tau)$ be the family of Zoll Finsler metrics on the projective plane~$M$ given by applying the retraction constructed in Theorem~\ref{theo:retractrp2} to a Zoll Finsler metric~$F$ with geodesic length~$\pi$.
There exists a natural one-parameter family of (homogeneous) symplectomorphisms 
\[
\phi_\tau: T^*M \setminus \{0\} \to T^*M \setminus \{0\}
\]
with $F_\tau^* \circ \phi_\tau = g_0^*$.
In particular, the cogeodesic flows of~$F$ and~$g_0$ are symplectically conjugate.

Furthermore, the symplectomorphism~$\phi_\tau$ takes the cogeodesics of~$g_0$ to the curves obtained from the (co)-geodesics of~$F$ by applying the curvature flow of the canonical round metric.
\end{corollary}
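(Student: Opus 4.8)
The plan is to produce $\phi_\tau$ by Moser's trick, exactly as in Weinstein's construction~\cite{wein75} (\cf~\cite[Appendix~B]{guillemin}). Write $H_\tau = F_\tau^*$ for the family of dual Finsler norms on $T^*M\setminus\{0\}$; each $H_\tau$ is smooth, positive, and homogeneous of degree one off the zero section, and its Hamiltonian vector field $X_{H_\tau}$ generates the cogeodesic flow $\varphi_\tau^s$ of $F_\tau$. By Theorem~\ref{theo:flowrp2} together with Theorem~\ref{theo:retractrp2}, every $F_\tau$ is Zoll with geodesic length~$\pi$, so $\varphi_\tau^s$ is periodic of period~$\pi$ on the unit cotangent bundle $\Sigma_\tau := \{H_\tau = 1\}$, and by $1$-homogeneity $\varphi_\tau^\pi = \id$ on all of $T^*M\setminus\{0\}$. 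Normalising so that $\phi_\tau = \id$ at the value of~$\tau$ for which $F_\tau = g_0$, I would seek $\phi_\tau$ as the $\tau$-flow of a non-autonomous Hamiltonian $h_\tau$ that is homogeneous of degree one; such a flow is automatically a homogeneous symplectomorphism of $T^*M\setminus\{0\}$, which accounts for the homogeneity and naturality asserted in the statement.

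Differentiating the desired identity $H_\tau\circ\phi_\tau = g_0^*$ in~$\tau$ and using $\tfrac{d}{d\tau}\phi_\tau = X_{h_\tau}\circ\phi_\tau$ reduces the problem to finding, for each~$\tau$, a $1$-homogeneous function $h_\tau$ whose derivative along the cogeodesic flow satisfies the cohomological equation
\[
X_{H_\tau}(h_\tau) \;=\; -\,\partial_\tau H_\tau .
\]
Reading this along a closed orbit as $\tfrac{d}{ds}\,h_\tau(\varphi_\tau^s(p)) = -(\partial_\tau H_\tau)(\varphi_\tau^s(p))$, one sees it is solvable if and only if the average of $\partial_\tau H_\tau$ over every periodic orbit vanishes, i.e.
\[
\int_0^\pi \big(\partial_\tau H_\tau\big)\!\left(\varphi_\tau^s(p)\right)\,ds \;=\; 0 \qquad\text{for all } p .
\]
Granting this, I would solve for $h_\tau$ by the standard averaged primitive
\[
h_\tau(p) \;=\; -\int_0^\pi \frac{s}{\pi}\,\big(\partial_\tau H_\tau\big)\!\left(\varphi_\tau^s(p)\right)\,ds ,
\]
which is single-valued precisely because of the zero-average condition (integrate by parts and use $\varphi_\tau^\pi=\id$), and is homogeneous of degree one because $\varphi_\tau^s$ preserves the homogeneity degree and $\partial_\tau H_\tau$ is $1$-homogeneous. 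Since $M$ and the bundles $\Sigma_\tau$ are compact and vary smoothly, the $\tau$-flow of $X_{h_\tau}$ stays in a compact region of $T^*M\setminus\{0\}$, so $\phi_\tau$ is well defined on $T^*M\setminus\{0\}$ for every~$\tau$ and maps $\Sigma_0$ onto $\Sigma_\tau$.

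The crux — and the only place where the Zoll hypothesis enters — will be the vanishing of the orbit average, which I would deduce from the constancy of the action. Let $\lambda$ be the canonical one-form on $T^*M$, with $\omega=d\lambda$ and Liouville field~$E$. Euler's identity for the $1$-homogeneous Hamiltonian $H_\tau$ gives $\lambda(X_{H_\tau})=\pm H_\tau$, so the action of the orbit through any $p\in\Sigma_\tau$ equals $\oint\lambda=\pm\pi$, independent of~$p$ and of~$\tau$. On the other hand, the first variation of the action of a smoothly varying family of closed orbits of period~$\pi$ constrained to the levels $\{H_\tau=1\}$ equals, up to sign, $\int_0^\pi(\partial_\tau H_\tau)(\varphi_\tau^s(p))\,ds$. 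Since the action is constant in~$\tau$, this integral vanishes, which is exactly the required condition. This is Weinstein's observation that Zoll Hamiltonians of a common period are symplectically equivalent, and it applies here because all the $F_\tau$ share the geodesic length~$\pi$.

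Finally, the map $\Phi:=\phi_\tau$ at the endpoint $F_\tau=F$ satisfies $F^*\circ\Phi=g_0^*$, so $\Phi$ conjugates the cogeodesic flows of $g_0$ and $F$, which gives the asserted symplectic conjugacy. For the last assertion, the identity $H_\tau\circ\phi_\tau=g_0^*$ shows that $\phi_\tau$ intertwines the Hamiltonian flow of $g_0^*$ with that of $F_\tau^*$, hence carries each cogeodesic of $g_0$ to a cogeodesic of $F_\tau$. By the construction of the retraction in Theorem~\ref{theo:retractrp2} (\cf~Theorem~\ref{theo:flowrp2}), the geodesics — equivalently, the cogeodesics — of $F_\tau$ are precisely the images of the (co)-geodesics of $F$ under the curvature flow of the canonical round metric. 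Combining the two statements yields that $\phi_\tau$ takes the cogeodesics of $g_0$ to the curvature-flow images of the (co)-geodesics of $F$, as claimed.
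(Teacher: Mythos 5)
Your proposal is correct and takes essentially the same approach as the paper: the paper's entire proof of Corollary~\ref{coro:conj} is to invoke Weinstein's construction via Moser's trick (\cite{wein75}, \cite[Appendix~B]{guillemin}) applied to the path of equal-length Zoll metrics~$(F_\tau)$ produced by Theorem~\ref{theo:retractrp2}, which is exactly the argument you carry out in detail, with the constancy of the orbit action (all periods equal to~$\pi$) furnishing the solvability of the cohomological equation. The only difference is explicitness — the paper outsources these computations to the cited references, while you spell them out.
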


The symplectic conjugacy of the cogeodesic flows of Zoll Finsler two-spheres (and Zoll Finsler projective planes by taking their quotient) has recently been established in~\cite{ABHS} by other means.
Therefore, the statement about the symplectic conjugacy in Corollary~\ref{coro:conj} is not new, but our approach provides extra information on the symplectomorphisms~$\phi_\tau$ and yields an alternative proof.

\medskip

In the proof of our main theorem, we will need the following theorem connecting the geodesic flows of Zoll projective planes~$M$.
In this result, we identify the unit tangent bundle~$SM$ of any Finsler metric on~$M$ with the unit tangent bundle~$S_0M$ of the canonical round metric~$g_0$ by radial projection.

\begin{theorem} \label{theo:flowrp2}
Let $(M,F)$ be a Zoll Finsler projective plane.
There exists a natural one-parameter family of smooth free $S^1$-actions~$(\rho_\tau)_{0 \leq \tau \leq 1}$ on~$S_0 M$
\[
\rho_\tau:S^1 \times S_0 M \to S_0 M
\]
between the geodesic flows of~$F$ and~$g_0$ such that every $\rho_\tau$-orbit projects to an embedding of~$S^1$ into~$M$ under the canonical projection $S_0 M \to M$.

Furthermore, this one-parameter family of actions is induced by the curvature flow on the canonical round projective plane.
\end{theorem}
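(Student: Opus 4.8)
The plan is to realize the interpolating family of circle actions geometrically, by letting the \emph{curve shortening (curvature) flow} of the round metric~$g_0$ act on the geodesics of~$F$ and then lifting the resulting curves to~$S_0 M$. First I would record the structure at $\tau = 0$: since $F$ is Zoll, its geodesic flow is a free $S^1$-action on the unit tangent bundle~$SM$, and the radial identification $SM \cong S_0 M$ turns it into a free $S^1$-action~$\rho_0$ whose orbits are the tangent lifts (with $g_0$-normalized velocity) of the oriented $F$-geodesics. Each $F$-geodesic is an embedded non-contractible circle in~$\RP^2$ (its lift to the orientable Zoll double cover is an antipodally symmetric simple closed geodesic), and any two distinct $F$-geodesics meet transversally in exactly one point --- the ``projective line'' incidence pattern that I will need to preserve.

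Next I would introduce the deformation. Applying the curve shortening flow of~$g_0$ to each $F$-geodesic~$\gamma$ produces a family~$\gamma_\tau$ of embedded curves; after rescaling the flow time to $\tau \in [0,1]$, the initial curves $\gamma_0 = \gamma$ flow to $g_0$-geodesics~$\gamma_1$. The analytic inputs I would invoke here are a Grayson-type theorem on $(\RP^2, g_0)$: long-time existence, preservation of embeddedness, and convergence of each non-contractible simple closed curve to a closed $g_0$-geodesic, with smooth dependence on the initial curve and on~$\tau$. The decisive point is that the incidence pattern is preserved along the flow: by Angenent's intersection-comparison principle, the number of intersection points of two curves each evolving by curve shortening flow is finite and non-increasing, and strictly decreases whenever a tangency occurs; since two non-contractible simple closed curves on~$\RP^2$ must meet (the mod-$2$ intersection form is non-trivial), the count is pinned at its initial value~$1$, so no tangency can ever form and each pair $\gamma_\tau, \gamma'_\tau$ continues to meet transversally in exactly one point.

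With the flow in hand, for each fixed~$\tau$ I would build~$\rho_\tau$ from the oriented flowed curves. Lifting each oriented~$\gamma_\tau$ to its $g_0$-normalized velocity curve yields an embedded circle $\tilde\gamma_\tau \subset S_0 M$ that projects to the embedded circle~$\gamma_\tau$, as required. The transversal single intersection from the previous step guarantees that distinct flowed curves never share a common point with a common $g_0$-direction, so the tangent lifts are pairwise disjoint; they are moreover immersed and mutually transverse, so the tangent-lift map from the ($S^1$-bundle of) oriented flowed curves to~$S_0 M$ is a local diffeomorphism, and by a covering/degree argument --- using that it is a diffeomorphism at $\tau = 0$ and that the sheet number is constant in~$\tau$ --- it is a global diffeomorphism. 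Thus the tangent lifts are the fibres of a smooth oriented circle bundle $S_0 M \to \G$ over the space~$\G$ of oriented $F$-geodesics; since an oriented smooth circle bundle admits a principal $S^1$-structure (the structure group $\Diff^+(S^1)$ retracting onto $\SO(2)$), I obtain a free $S^1$-action, which I normalize by rescaling the leafwise velocity field to constant period, so that $\rho_\tau$ depends smoothly on~$\tau$ and agrees with the $F$- and $g_0$-geodesic flows at $\tau = 0$ and $\tau = 1$.

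The main obstacle I expect is the analytic half: establishing the Grayson-type long-time existence and convergence on $(\RP^2, g_0)$ together with the preservation of the transversal one-point incidence pattern, and then upgrading the resulting smoothly varying foliation by circles into a genuine smoothly varying family of free $S^1$-actions with the correct endpoints. The incidence-preservation via intersection-comparison is the conceptual heart: disjointness of the tangent lifts, the bundle structure, and hence the very existence of~$\rho_\tau$ all rest on the intersection count staying equal to one without tangencies throughout the flow.
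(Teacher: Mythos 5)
Your overall route is the same as the paper's: apply the curvature flow of~$g_0$ to the $F$-geodesics, use Angenent's intersection comparison to keep the incidence pattern rigid (hence the tangent lifts disjoint), and turn the resulting decomposition of~$S_0 M$ into a family of free circle actions; the only structural difference is that you work on~$\RP^2$ directly, whereas the paper passes to the balanced double cover and quotients back at the end. The genuine gap is at the step ``immersed and mutually transverse, so the tangent-lift map \dots is a local diffeomorphism.'' Embeddedness of each lift and disjointness of distinct lifts control the differential of your map only along the leaf direction; they say nothing about the two directions transverse to the leaves, i.e.\ about the derivative of the flowed curve with respect to the initial geodesic. A smooth injective map need not be an immersion (think of $x \mapsto x^3$), so your covering/degree argument has no local diffeomorphism to start from: injectivity plus compactness only yields a continuous bijection, hence a homeomorphism, and the circle decomposition could a priori fail to be a \emph{smooth} fibration --- while the theorem asks for \emph{smooth} free $S^1$-actions. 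This missing step is exactly the technical heart of the paper's proof: one must show that the normal variation field~$Y_\perp^t$, obtained by differentiating the flowed curves with respect to the initial geodesic, never acquires a degenerate zero. The paper does this (Proposition~\ref{prop:immersion}, Lemma~\ref{lem:2}, Theorem~\ref{theo:diffeo}) by writing the linearization of the flow equation as a linear parabolic PDE for~$\vv=\uu_\lambda$, invoking Angenent's zero-set theorem (Theorem~\ref{theo:ang-crelle}) --- the number of zeros is non-increasing and strictly drops at any multiple zero --- and pinning the count at its initial value, computed at $t=0$ from Theorem~\ref{theo:LM}: every nontrivial normal Jacobi field on a Zoll Finsler surface has exactly two simple zeros (one, on the projective plane). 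Your intersection-comparison principle for pairs of curves is the integrated version of this statement; the infinitesimal version is logically independent of it, and it is the one your construction actually needs.

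A second, smaller gap: you ``record'' that two distinct $F$-geodesics meet transversally in exactly one point, but this is not a fact you can quote --- it is Theorem~\ref{theo:2} of the paper (stated on the double cover: exactly two points), proved there by a connectedness argument on the space of geodesics together with a digon-deformation argument, and the paper explicitly notes that no reference for it exists. Mod-$2$ intersection theory only gives an \emph{odd} number of intersection points; if the initial count were, say, three, Angenent's principle would permit a tangency along the flow (the count dropping from three to one), at which instant the lifts to~$S_0 M$ would collide and your bundle structure would break. So this lemma must be proved, not assumed. Apart from these two points, your variant is viable: on~$\RP^2$ noncontractibility replaces the equal-area (balanced) hypothesis in guaranteeing long-time existence, in the spirit of~\cite{hsu}, which the paper cites; you should, however, also justify the endpoint $\tau=1$, since the flowed curves reach the round geodesics only asymptotically and as unparametrized curves, which is what the paper's convergence statement (Theorem~\ref{theo:flowS2}) handles.
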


%Now, recall that two $C^1$-close actions of a compact Lie group on a connected closed manifold are conjugate by a diffeomorphism isotopic to the identity map, \cf~\cite{palais} and \cite{GK73}.

Here again, we emphasize that the family of circle actions~$(\rho_\tau)$ connecting the two geodesic flows proceeds in a natural way from the curvature flow: the $\rho_\tau$-orbits correspond to the curves obtained from the $F$-geodesics by applying the curvature flow of the canonical round metric.
This construction makes the family~$(\rho_\tau)$ more trackable.

\medskip

Actually, Theorem~\ref{theo:flowrp2} directly follows from~\cite{hsu} once the intersection pattern of closed geodesics on Zoll Finsler surfaces is established, \cf~Section~\ref{sec:intersection}.
More precisely, the construction of the family of actions~$(\rho_t)$ follows from Theorem~\ref{theo:flowS2}, which is a particular case of a result of~\cite{hsu} on the curvature flow.
Still, we decided to present a proof of Theorem~\ref{theo:flowS2}, since the estimates required in our case are weaker than those established in~\cite{hsu}.

\medskip

Specifically, the construction of the family of actions~$(\rho_\tau)$ proceeds as follows.
First, we examine the infinitesimal and non-infinitesimal intersection properties of the closed geodesics of Zoll Finsler two-spheres, \cf~Section~\ref{sec:intersection}.
Then, we apply the curvature flow of the canonical round sphere to simultaneously deform these simple closed curves into equators of the round sphere.
Here, we need to assume that the geodesics of the Zoll Finsler sphere divide the sphere into two domains of the same $g_0$-area, otherwise the curves shrink to points under the curvature flow of the canonical round sphere, \cf~Theorem~\ref{theo:prop}.
(This is the case on the orientable double cover of a Zoll Finsler projective plane.)
Note also that for arbitrary metrics on the two-sphere, the curvature flow may not converge as it may oscillate between closed geodesics.
However, it does converge to equators of the round sphere when applied to simple curves dividing the round sphere into domains of the same area, see Theorem~\ref{theo:prop} for a discussion about the convergence of the curvature flow.
%We refer to the proof of Theorem~\ref{theo:prop} for a discussion about the convergence of the curvature flow.
Lifting the curve deformations given by the curvature flow to the unit tangent bundle, we connect the geodesic flow of the Zoll Finsler metric to the geodesic flow of the canonical round metric~$g_0$.

\medskip

We do not know whether our results extend to non-reversible Finsler metrics as several arguments only work in the reversible case.
It would be interesting to clarify this point.

\medskip

\noindent \emph{Acknowledgment:} We are indebted to Juan-Carlos \'Alvarez Paiva for bringing the preprint~\cite{hsu} to our attention after we sent him a first version of our paper at the end of the summer 2015.

\section{Preliminaries}

In this preliminary section, we go over constructions related to the geodesic flow of a Zoll Finsler metric and review the main features of the curvature flow on the canonical round two-sphere.

\begin{definition} \label{def:finsler}
A (reversible) \emph{Finsler metric} on a closed manifold~$M$ is a continuous function $F:TM \to [0,\infty)$ on the tangent bundle~$TM$ of~$M$ satisfying the following properties (here, $F_x:=F_{|T_x M}$ for short):
\begin{enumerate}
\item Smoothness: $F$ is smooth outside the zero section;
\item Homogeneity: $F_x(tv) = |t| \, F_x(v)$ for every $v \in T_x M$ and $t \in \R$;
\item Quadratic convexity: for every $x \in M$, the function~$F_x^2$ has positive definite second derivatives on $T_x M \setminus \{ 0\}$, that is, for every $p,u,v \in T_x M$, the symmetric bilinear for
\[
g_p(u,v) = \frac{1}{2} \, \frac{\partial^2}{\partial s \partial t} F_x^2(p+tu+sv)_{|t=s=0}
\]
is an inner product.
\end{enumerate}
The metric~$F$ induces a Minkowski norm~$F_x$ on each tangent space~$T_x M$.
We will denote by $F^*:T^*M \to \R$ the function whose restriction to each cotangent space~$T_x^* M$ is given by the dual norm~$F_x^*$.

The quadratically convex condition (as opposed to a mere convex condition) allows us to define a \emph{geodesic flow} for~$F$ acting on the unit tangent bundle~$SM$ of~$M$, \cf~\cite[\S1]{besse}.
The geodesic flow of a Zoll Finsler manifold~$(M,F)$ of geodesic length~$2\pi$ is periodic and defines a smooth free action of~$S^1=\R/2\pi \Z$ on~$SM$
\[
\rho_F: S^1 \to \Diff(SM)
\]
given by
\[
\rho_F(\theta)(v) = \gamma_v'(\theta)
\]
where $\gamma_v$ is the (arclength parametrized) $F$-geodesic induced by~$v$.

Recall that the quotient manifold theorem asserts that if $G$ is a Lie group acting smoothly, freely and properly on a smooth manifold~$N$, then the quotient space $N/G$ is a topological manifold with a unique smooth structure such that the quotient map $N \to N/G$ is a smooth submersion.
This result applies to the $S^1$-action~$\rho_F$ of the geodesic flow of~$F$ on~$S M$.

Denote by 
\[
\Gamma_F=S M/\rho_F
\]
the quotient manifold and by 
\begin{equation} \label{eq:q}
q_F:S M \to \Gamma_F
\end{equation}
the quotient submersion.
The quotient manifold~$\Gamma_F$ represents the space of unparametrized oriented geodesics of the Zoll Finsler manifold~$(M,F)$.
When $M$ is a two-sphere, the space~$\Gamma_F$ is diffeomorphic to~$S^2$ as it follows from the homotopy exact sequence of the fibration~$q_F$, \cf~\cite[\S2.10]{besse}.
\end{definition}

Let us review the main features of the curvature flow on the canonical round two-sphere.

\begin{definition} \label{def:flow}
Let $\gamma:S^1 \to M$ be a smooth embedded curve on the canonical round two-sphere~$M$.
There exists a homotopy $\gamma_t:S^1 \to M$ evolving according to the equation
\[
\frac{\partial \gamma_t}{\partial t} = \kappa \, \mathfrak{n}
\]
where $\kappa$ is the curvature of~$\gamma$ in~$M$ and $\mathfrak{n}$ is its unit normal vector.

This flow, referred to as the \emph{curvature flow} on the canonical round two-sphere, is defined for a maximal time interval~$[0,T)$, where $T$ is finite if and only if $(\gamma_t)$ converges to a point when $t$ tends to~$T$, \cf~\cite{grayson}.
\end{definition}

We summarize the properties of the curvature flow on the canonical round two-sphere that we will need in this article as follows.

\begin{theorem}[\cite{angII}, \cite{gage}, \cite{grayson}] \label{theo:prop}
The curvature flow~$(\gamma_t)$ of an embedded closed curve~$\gamma$ on the canonical round two-sphere satisfies the following properties:
\begin{enumerate}
\item the length of~$\gamma_t$ decreases unless $\gamma$ is a geodesic, in which case the flow is constant; \label{c1}
\item the curves $\gamma_t$ remain embedded, \cf~\cite[Theorem~3.1]{gage} (see also~\cite[Theorem~1.3]{angII}); \label{c2}
\item two disjoint smooth simple curves~$\gamma_1$ and~$\gamma_2$ remain disjoint through the curvature flow, that is, $\gamma_{1,t}$ and~$\gamma_{2,t}$ are disjoint, unless one of them shrinks to a point, \cf~\cite[\S1]{angII}. \label{c3}
\item the curvature of~$\gamma_t$ converges to zero in the $C^\infty$-norm unless $\gamma_t$ converges to a point, \cf~\cite{grayson}; \label{c4}
\item if $\gamma$ divides the round sphere into two domains with the same area then the curves $\gamma_t$ also divide the round sphere into two domains with the same area, \cf~\cite[Proof of Theorem~5.1]{gage}, and converge to an equator as unparametrized curves. \label{c5}
\item if $\gamma$ does not divide the round sphere into two domains with the same area then the curvature flow~$(\gamma_t)$ converges to a point. \label{c6}
\end{enumerate}
\end{theorem}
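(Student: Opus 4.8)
The six properties are the standard qualitative features of the curve shortening flow on the round sphere, and the plan is to obtain them by combining two first-variation identities with the parabolic maximum principle and the long-time convergence theory of~\cite{grayson}. Normalize the round metric to have Gaussian curvature~$1$ and total area~$4\pi$, orient $\gamma_t$ so that one of its two complementary disks~$\Omega_t$ lies on the left, and let $\kappa$ denote the signed geodesic curvature for this orientation, so that $\partial_t \gamma_t = \kappa \, \mathfrak{n}$ with $\mathfrak{n}$ the inward (left) unit normal. The first step is the length identity: differentiating $\length(\gamma_t)$ along the flow gives
\[
\frac{d}{dt} \length(\gamma_t) = - \int_{\gamma_t} \kappa^2 \, ds \leq 0 ,
\]
with equality precisely when $\kappa \equiv 0$, i.e.\ when $\gamma$ is a geodesic, in which case the normal velocity vanishes identically and the flow is stationary. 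This is property~(\ref{c1}), and it exhibits the flow as the $L^2$-gradient flow of length.

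For properties~(\ref{c2}) and~(\ref{c3}) I would invoke the maximum principle for the quasilinear parabolic system satisfied by~$\gamma_t$. Disjointness~(\ref{c3}) is the avoidance principle: the extrinsic distance between two disjoint solutions is nondecreasing, so a first contact point would force a tangency forbidden by the strong maximum principle unless one curve has already shrunk to a point. Embeddedness~(\ref{c2}) is the self-avoidance version of the same statement, established in~\cite[Theorem~3.1]{gage} and~\cite[Theorem~1.3]{angII}; I would cite these rather than reprove them.

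The heart of properties~(\ref{c5}) and~(\ref{c6}) is the area identity. Since each $\gamma_t$ remains embedded by~(\ref{c2}), it bounds a disk~$\Omega_t$, and the first variation of area together with the Gauss--Bonnet theorem $\int_{\gamma_t} \kappa \, ds = 2\pi - \area(\Omega_t)$ on the unit sphere yields
\[
\frac{d}{dt} \area(\Omega_t) = - \int_{\gamma_t} \kappa \, ds = \area(\Omega_t) - 2\pi ,
\]
so that $\area(\Omega_t) - 2\pi = e^{t} \bigl( \area(\Omega_0) - 2\pi \bigr)$ for as long as the flow is defined. If $\gamma$ splits the sphere into two domains of equal area~$2\pi$, then $\area(\Omega_t) \equiv 2\pi$, which is the first assertion of~(\ref{c5}); in particular $\gamma_t$ cannot converge to a point, since a point bounds complementary domains of areas~$0$ and~$4\pi$. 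If instead the two areas differ, the quantity $|\area(\Omega_t) - 2\pi|$ grows exponentially until the smaller disk degenerates, so by the extinction criterion of Definition~\ref{def:flow} the flow converges to a point in finite time, which is~(\ref{c6}).

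It remains to feed these identities into the convergence theory, which supplies~(\ref{c4}) and the second half of~(\ref{c5}). Here I would invoke the main theorem of~\cite{grayson}: an embedded closed curve on the round two-sphere either contracts to a point in finite time or exists for all time and converges smoothly, in every $C^k$-norm, to a closed geodesic, its curvature then tending to zero. Combined with the area dichotomy above, the equal-area hypothesis excludes extinction, so the limit is a closed geodesic of the round metric, necessarily a great circle, i.e.\ an equator; this proves~(\ref{c5}). I expect this last step to be the genuine obstacle: the long-time existence and smooth subconvergence constitute the deep analytic input of~\cite{grayson}, resting on the monotonicity and isoperimetric estimates of~\cite{gage} and the decay of intersection numbers of~\cite{angII}. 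Accordingly my proof would reconstruct only the two variational identities and organize the cited convergence results around them, rather than redevelop the parabolic regularity theory.
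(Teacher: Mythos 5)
Your treatment of properties (1), (2), (3), (6) and the first half of (5) is correct and essentially the same as the paper's: the length identity for (1), citing Gage and Angenent for (2) and (3), and for (6) the Gauss--Bonnet computation $\frac{d}{dt}|D_t| = -\int_{\gamma_t}\kappa\,ds = |D_t|-2\pi$, hence $|D_t|-2\pi = e^t\bigl(|D_0|-2\pi\bigr)$, forcing a finite maximal existence time and then invoking the extinction criterion of Definition~\ref{def:flow}; this is word for word the paper's argument, and the equal-area conservation in (5) comes from the same identity.

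The gap is in the second half of (5). You invoke ``the main theorem of~\cite{grayson}'' as asserting that a non-extinct flow converges smoothly, in every $C^k$-norm, to a closed geodesic. Grayson's theorem does not say this: it says the \emph{curvature} of $\gamma_t$ converges to zero in $C^\infty$, which (with the length bounds) yields only \emph{subconvergence} --- every sequence $t_n\to\infty$ has a subsequence along which $\gamma_{t_n}$ converges to some great circle --- not convergence of the flow to a single limit. On the round sphere this distinction is genuine, because geodesics are not isolated: the equators form a continuous family, and a priori the flow could drift or oscillate among them (the paper explicitly warns of exactly this phenomenon for metrics on $S^2$ in its introduction, and your own phrase ``smooth subconvergence'' a sentence later betrays the tension). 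Closing precisely this gap is what the paper's proof of (5) does: having established equal areas (hence no extinction) and curvature tending to zero, it observes that the length of the simple curves $\gamma_t$ tends to $2\pi$, so the total curvature $\int_{\gamma_t}\sqrt{\kappa^2+1}\,ds$ tends to $2\pi$ and is eventually less than $3\pi$; these are the two hypotheses of Gage's Theorem~5.1 in~\cite{gage}, which is the result that upgrades subconvergence to convergence to a single equator as unparametrized curves. Without this step --- or some substitute such as a \L{}ojasiewicz-type stability argument --- your proof of (5) is incomplete.
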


\begin{proof}
The second part of the point~\eqref{c5} on the convergence of the curvature flow to an equator follows by combining the works of Gage~\cite{gage} and Grayson~\cite{grayson}.
Indeed, from the first part of the point~\eqref{c5}, the curves $\gamma_t$ divide the round sphere into two domains with the same area.
In particular, the curvature flow~$(\gamma_t)$ of~$\gamma$ does not converge to a point.
From the point~\eqref{c4}, its curvature converges to zero in the $C^\infty$-norm and, since the loops~$\gamma_t$ are simple, its length tends to~$2\pi$.
In particular, its total geodesic curvature $\int_{\gamma_t} \sqrt{\kappa^2+1} \, ds$ tends to~$2 \pi$, and so, is less than~$3 \pi$ for $t$ large enough.
This ensures that the two conditions of Theorem~5.1 in~\cite{gage} are satisfied.
Therefore, we conclude that the curvature flow~$(\gamma_t)$ converges to an equator as unparametrized curves. 

\medskip

For the point~\eqref{c6}, let $D_t$ be the domain of the round two-sphere bounded by the simple closed curve~$\gamma_t$ such that the orientation of~$D_t$ induces the same orientation as~$\gamma_t$ on its boundary.
By the Gauss-Bonnet formula, the area of~$D_t$ satisfies
\[
|D_t| = 2\pi - \int_{\gamma_t} \kappa_t \, ds.
\]
From~\cite[Lemma~1.3]{gage}, we have
\[
\frac{d}{dt} |D_t| = - \int_{\gamma_t} \kappa_t \, ds = |D_t| - 2 \pi.
\]
Therefore, $|D_t| = (|D_0|-2\pi) e^t + 2\pi$.
Since $|D_0| \neq 2\pi$, it follows that the curvature flow of~$\gamma$ is only defined on a finite time interval.
Hence, the result.
\end{proof}

\section{Geodesic intersections on Zoll Finsler two-spheres} \label{sec:intersection}

In this section, we examine some features satisfied by the geodesics of Zoll Finsler two-spheres.

\medskip

The following result is established in~\cite{LM} for Riemannian metrics but the proof carries over to Finsler metrics.

\begin{proposition}[{\cite[Proposition~2.21]{LM}}] \label{prop:equiv}
Let $(M,F)$ be a Finsler two-sphere.
The following assertions are equivalent:
\begin{enumerate}[(i)]
\item all the geodesics of~$F$ are simple closed curves; \label{ii}
\item all the geodesics of~$F$ are simple closed curves of the same length.
\end{enumerate}

In particular, the orientable double cover of a Zoll Finsler projective plane of geodesic length~$\pi$ is a Zoll Finsler two-sphere of geodesic length~$2\pi$.
\end{proposition}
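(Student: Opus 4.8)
The plan is to treat the two directions separately; the implication (ii)$\Rightarrow$(i) is immediate from the definitions, so all the content lies in (i)$\Rightarrow$(ii). First I would work on the unit tangent bundle~$SM$ and record that, since every geodesic of~$F$ is closed, the geodesic flow~$\phi^t$ is a flow all of whose orbits are periodic; writing $L(v)$ for the minimal period of the orbit through $v\in SM$, this is exactly the length of the geodesic~$\gamma_v$, and the goal becomes to show that $L$ is constant. The tool I would use is the Hilbert form~$\alpha$ of the Finsler metric on~$SM$: it is a contact form whose Reeb vector field is the generator~$X$ of the geodesic flow, so that $\alpha(X)=1$ and $\iota_X\,d\alpha=0$. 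Since $X$ is the speed-one generator, one has $L(v)=\oint_{\gamma_v}\alpha$ for every~$v$, which turns the period into a geometric action integral.

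Next I would prove that $L$ is locally constant on~$SM$. Fix~$v_0$ with embedded orbit $O_0=\gamma_{v_0}$ together with a short transversal arc through~$v_0$; because $O_0$ is an embedded circle, the orbits through the nearby vectors sweep out an embedded, flow-invariant cylinder~$\Sigma$ whose oriented boundary is the difference $\gamma_{v_1}-\gamma_{v_0}$ of two of these orbits. As $\Sigma$ is a union of orbits, $X$ is everywhere tangent to~$\Sigma$; being two-dimensional, $\Sigma$ has tangent space spanned by~$X$ and one further direction, so $d\alpha$ restricts to zero on~$\Sigma$ by the Reeb relation $\iota_X\,d\alpha=0$. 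Stokes' theorem then yields $L(v_1)-L(v_0)=\int_\Sigma d\alpha=0$, so $L$ is constant along the transversal, hence locally constant.

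The delicate point, and the step I expect to be the main obstacle, is that the flow-cylinder~$\Sigma$ above is genuinely embedded only away from exceptional orbits, that is, orbits near which the neighbouring orbits wind several times before closing up, so that $L$ could a priori jump (as happens for Seifert fibrations with exceptional fibres). This is precisely where the hypothesis that all geodesics are \emph{simple} enters: near an orbit of multiplicity $k\geq 2$ the neighbouring geodesics return to a point of~$M$ with a rotated direction before closing, and therefore have a self-intersection in~$M$, contradicting simplicity. Hence there are no exceptional orbits, $L$ is locally constant on all of the connected manifold~$SM$, and therefore constant; this is assertion~(ii).

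Finally, for the ``in particular'' statement I would pass to the orientable double cover $\pi\colon S^2\to\RP^2$ with deck involution~$A$, and pull back~$F$ to an $A$-invariant reversible quadratically convex Finsler metric $\tilde F=\pi^*F$ on~$S^2$, whose geodesics are exactly the lifts of the geodesics of~$F$. Each such lift is a simple closed curve, so by the equivalence just proved, applied to $(S^2,\tilde F)$, all lifted geodesics share a common length, which forces all geodesics of~$F$ to lie in a single free homotopy class. Since $\RP^2$ carries a closed geodesic in the nontrivial class (for instance, by minimizing length in the nontrivial free homotopy class), that common class is nontrivial, so every geodesic of~$F$ lifts to a single embedded circle double-covering it, of length~$2\pi$. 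Thus $(S^2,\tilde F)$ is a Zoll Finsler two-sphere of geodesic length~$2\pi$, as claimed.
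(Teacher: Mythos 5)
Your skeleton for (i)$\Rightarrow$(ii) — the Hilbert form $\alpha$ with $\alpha(X)=1$ and $\iota_X\,d\alpha=0$, the identity $L(v)=\oint_{\gamma_v}\alpha$, and the Stokes argument on flow-invariant cylinders — is the right mechanism, and it is exactly what forces equal lengths. The genuine gap is the dichotomy your argument silently rests on: you assume that near every closed orbit the orbit foliation either is locally trivial (nearby orbits close up after one return, sweeping your cylinder $\Sigma$) or has a Seifert-type exceptional fibre of finite multiplicity $k\ge 2$, which you then exclude via simplicity. That dichotomy is not automatic. For a flow on a compact manifold all of whose orbits are periodic, the minimal-period function need not even be locally bounded, in which case no such local model exists at all: Sullivan built a smooth flow on a compact $5$-manifold with all orbits closed and unbounded periods, and Epstein--Vogt did the same in dimension $4$. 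What rules this out here is a substantial theorem, not an observation: either Epstein's theorem that a periodic flow on a compact $3$-manifold is a Seifert fibration~\cite{eps}, or Wadsley's theorem that a geodesible periodic flow (your form $\alpha$ is precisely the geodesibility certificate) satisfies $\phi_T=\mathrm{id}$ for some common $T$, after which each minimal period is $T/k$ and your exclusion of $k\ge 2$ finishes the proof. One of these must be invoked at the point where you write that $\Sigma$ is embedded ``only away from exceptional orbits''; as written, that sentence presupposes the Seifert structure. Note that the paper itself offers no proof to compare against: it cites \cite[Proposition~2.21]{LM} and asserts the argument carries over to Finsler metrics, and that proof runs through exactly this input (which is presumably why Epstein's paper appears in the bibliography while never being cited in the body).

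Two smaller points. Your exclusion of exceptional orbits is also too loose: a geodesic that ``returns to a point with a rotated direction'' need not cross itself. The correct argument is homological: an orbit near an exceptional fibre of multiplicity $k\ge2$ stays in a fibred solid-torus neighbourhood of the central orbit and winds $k$ times around it, so its projection stays in an annulus neighbourhood of the central (simple) geodesic and winds $k$ times around that annulus; since a simple closed curve in an annulus has winding number $0$ or $\pm1$, it must self-intersect. By contrast, your treatment of the ``in particular'' statement is essentially correct and is a legitimate alternative to the paper's Remark~\ref{rem:lift}: the paper argues directly that a noncontractible geodesic cannot be approximated by contractible geodesics of the same length, whereas you derive the uniform free homotopy class from the common length of the lifts (via the equivalence applied upstairs) together with the existence of a noncontractible closed geodesic, namely the systolic minimizer, which by the Zoll hypothesis is one of the geodesics of $F$. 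Both routes work; yours has the minor merit of reusing the equivalence just proved.
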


\begin{remark} \label{rem:lift}
One could directly prove the last statement of Proposition~\ref{prop:equiv}.
Simply observe that a noncontractible geodesic on a Finsler projective plane cannot be approached by a contractible one of the same length.
Thus, all the simple closed geodesics on a Zoll Finsler projective plane lift to simple closed geodesics of twice their length.
\end{remark}

The following result clarifies the intersection pattern of geodesics on Zoll Finlser surfaces.
Although the result is not surprising, we were unable to find a reference for it in the literature.

\begin{theorem} \label{theo:2}
Let $(M,F)$ be a Zoll Finsler two-sphere.
Every pair of distinct (closed) geodesics has exactly two (transverse) intersection points.
\end{theorem}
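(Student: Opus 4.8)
The plan is to prove the statement in four steps: transversality, evenness and positivity of the intersection count, independence of the count from the chosen pair, and finally the evaluation of the count. Throughout I identify $\Gamma_F \cong S^2$ with the space of oriented geodesics and write $\iota$ for the orientation-reversing involution on it.

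First I would record that two distinct geodesics can only meet transversally. If $\gamma_1$ and $\gamma_2$ were tangent at a point $p$, they would pass through $p$ with parallel tangent lines; since $F$ is reversible, the geodesic through $p$ with a prescribed tangent line is unique, so $\gamma_1$ and $\gamma_2$ would coincide. Hence distinct geodesics cross transversally and meet in a finite set. As each geodesic is a simple closed curve, it separates $S^2$ into two disks (Jordan curve theorem), and any closed curve transverse to it crosses it an even number of times; thus the number of intersection points is even. It is moreover at least $2$, because the geodesics through a fixed point $p$ form an entire pencil, any two members of which already meet at $p$, so the count cannot vanish.

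Next I would show that this even number is the same for every pair of distinct geodesics. Fixing $\gamma_1$, the partner $\gamma_2$ ranges over $\Gamma_F$ minus the two points representing $\pm\gamma_1$, which is connected. Since by Step~1 the two curves remain transverse throughout (a tangency never forms), their transverse intersection points persist and vary smoothly, and none can be created or destroyed, as that would require a tangency. Hence the geometric intersection number is locally constant in $\gamma_2$, so constant; letting $\gamma_1$ vary likewise, we obtain a single even integer $N \geq 2$ realized by all pairs.

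It remains to prove $N = 2$, and this is the main obstacle. Letting $\gamma_2$ degenerate to $\gamma_1$ inside $\Gamma_F$, the nearby geodesic becomes a $C^1$-small normal graph over $\gamma_1$ whose leading term is a nontrivial normal Jacobi field $J$ along $\gamma_1$; the intersection points converge to the zeros of $J$, so $N$ equals the number of zeros of $J$ in one period. As the geodesic flow is $2\pi$-periodic we have $D\phi^{2\pi}=\mathrm{id}$, so every normal Jacobi field is $2\pi$-periodic and the linearized normal flow is a loop $t \mapsto A(t)$ in $SL(2,\R)$ based at the identity, with $N$ equal to twice its winding number. This winding number is a homotopy invariant, constant over the connected $\Gamma_F$, and I would pin it to the value $1$ via the fixed topology of the unit tangent bundle: $SM \cong \RP^3 = L(2,1)$ is the total space of the Euler-number-$2$ circle bundle $q_F:SM \to \Gamma_F \cong S^2$, in agreement with the Euler number $\chi(S^2)=2$ of the unit tangent bundle $SM \to M$. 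The decisive geometric input here is \emph{embeddedness}: the geodesic flow is a \textbf{free} circle action whose orbits meet each tangent fiber $S_pM$ at most once, which is exactly what rigidifies the winding number to its topological value and excludes the extra oscillation a non-simple family would permit. Making the identification ``$N = $ Euler number of the unit tangent bundle'' fully rigorous — for instance by computing the linking of two fibers $S_pM$ and $S_qM$ inside $SM \cong \RP^3$, or the intersection of the dual curves $\ell_p,\ell_q \subset \Gamma_F$ — is the technical heart of the proof and the step I expect to require the most care.
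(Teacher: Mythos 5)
Your first three steps are correct and essentially coincide with the paper's own argument: transversality follows from uniqueness of the geodesic with a prescribed tangent line, evenness from the Jordan curve theorem, and the constancy of the intersection count from local constancy plus connectedness of the space of geodesics (the paper phrases this as the partition of $\G \setminus \{\gamma\}$ into the open sets $\G_{\gamma,k}$, which forces a single nonempty piece). Where you diverge is the evaluation of the constant: the paper's published proof of Theorem~\ref{theo:2} is purely geometric and uses no Jacobi fields or bundle topology at all. It takes a bigon $D$ between $\gamma_1$ and $\gamma_2$ with vertices $p,q$, rotates $v_1$ to $v_2$ and $v_2$ to $-v_1$ so as to deform the pair $(\gamma_1,\gamma_2)$ to $(\gamma_2,-\gamma_1)$ while tracking the bigon, and observes that $D$ and the final bigon $D_1$ together form a disk bounded by $\gamma_1$ and split by an arc of $\gamma_2$; hence $p$ and $q$ are the only intersection points.

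Your step 4, as written, has a genuine gap exactly where you flag it. Two points. First, the degeneration reducing $N$ to the number of zeros of a nontrivial normal Jacobi field $J$ silently requires the zeros of $J$ to be \emph{simple} (near a degenerate zero, the nearby geodesic could contribute zero or several intersection points); simplicity does hold, because $Y_\perp$ satisfies the second-order linear ODE $Y''_\perp + \kappa\, Y_\perp = 0$, but it must be said. Second, and more seriously, the identification of the winding number with half the Euler number is asserted rather than proved, and neither of your proposed rigorizations computes the right invariant directly: the intersection of the dual curves $\ell_p,\ell_q \subset \Gamma_F$ counts geodesics through the two \emph{points} $p$ and $q$, not intersection points of two geodesics, and the fibers $S_pM$, $S_qM$ are not null-homologous in $SM \cong \RP^3$ (the fiber generates $\pi_1(SM)=\Z/2$ by the homotopy exact sequence), so their integer linking number is not even defined. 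The clean way to finish along your lines is the covering argument of Theorem~\ref{theo:LM} (LeBrun--Mason), which the paper proves separately and calls the infinitesimal version of Theorem~\ref{theo:2}: the map $\varphi: SM \to PT\Gamma_F$ sending $v$ to the line of normal Jacobi fields along $\gamma_v$ vanishing at the basepoint of $v$ is a local diffeomorphism (precisely because zeros are simple), hence a covering, of degree $|\pi_1(PT\Gamma_F)|/|\pi_1(SM)| = 4/2 = 2$; therefore every nontrivial normal Jacobi field has exactly two zeros, which is the statement your winding-number/Euler-number step was meant to deliver. With that substitution your proof closes, and it then reproduces the route of the paper's companion result rather than of the paper's actual proof of Theorem~\ref{theo:2}.
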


\begin{proof}
Let $\G=\Gamma_F/\pm$ be the space of unparametrized geodesics of~$F$.
First, observe that two distinct (unparametrized) closed geodesics are either disjoint or have only transverse intersection points.
Now, fix a closed geodesic~$\gamma$ of~$F$.
For every nonnegative integer~$k$, consider the space~$\G_{\gamma,k}$ of closed geodesics different from~$\gamma$, intersecting~$\gamma$ at exactly $k$ (transverse) points.
The space~$\G_{\gamma,k}$ is clearly open in~$\G \setminus \{ \gamma \}$ as every closed geodesic close enough to a geodesic~$\gamma$ in~$\G_{\gamma,k}$ still has exactly $k$ transverse intersection points with~$\gamma$.
Observe that the open sets~$\G_{\gamma,k}$ are disjoint and cover~$\G \setminus \{ \gamma \}$ for $k$ running over all the nonnegative integers.
That is, 
\[
\G \setminus \{\gamma \} = \coprod_{k \in \N} \G_{\gamma,k}.
\]
By connectedness of~$\G \setminus \{ \gamma \}$, we conclude that $\G \setminus \{ \gamma \} = \G_{\gamma,k}$ for some nonnegative integer~$k_\gamma$.
That is, every closed geodesic different from~$\gamma$ intersects~$\gamma$ at exactly~$k_\gamma$ points.

Denote $\G_\gamma = \G_{\gamma,k_\gamma}$.
Observe that the integer~$k_\gamma$ does not depend on~$\gamma$.
Indeed, for every $\gamma_1,\gamma_2 \in \G$ with $\gamma_1 \neq \gamma_2$, we have $|\gamma_1 \cap \gamma_2|=k_{\gamma_1}$ since $\gamma_2 \in \G \setminus \{ \gamma_1 \} = \G_{\gamma_1}$, and, by symmetry, $|\gamma_1 \cap \gamma_2|=k_{\gamma_2}$ since $\gamma_1 \in \G \setminus \{ \gamma_2 \} = \G_{\gamma_2}$.
Hence, $k_{\gamma_1} = k_{\gamma_2}$.

Thus, every pair of distinct closed geodesics has exactly $k$ (transverse) intersection points, where the integer $k$ only depends on the dynamics of the geodesic flow of~$F$.
This integer is at least two for topological reasons.

Let $\gamma_1$ and $\gamma_2$ be two distinct closed geodesics.
Since the closed geodesics $\gamma_1$ and~$\gamma_2$ are simple, there exists a connected component~$D$ of $M \setminus (\gamma_1 \cup \gamma_2)$ bounded by exactly two geodesic arcs (one lying in~$\gamma_1$ and the other lying in~$\gamma_2$).
This connected component forms a bigon with endpoints~$p$ and~$q$.
Changing the parametrization of~$\gamma_1$ and~$\gamma_2$ if necessary, we can assume that the tangent vectors $v_1=\gamma_1'(0)$ and~$v_2 = \gamma_2'(0)$ based at~$p$ span a sector in~$T_p M$ pointing inside~$D$, \cf~Figure~\ref{fig:digon}.

\begin{figure}[htb] 
\def\svgwidth{7cm} 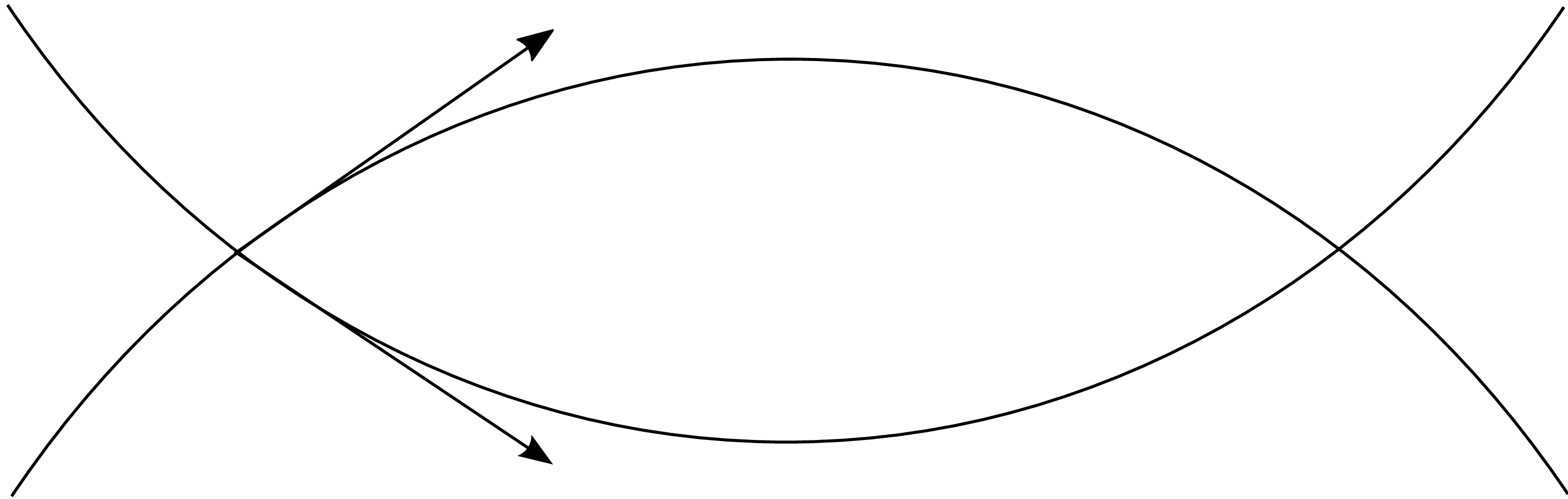
\caption{The digon~$D$} \label{fig:digon}
\end{figure}

Observe that the connected component~$D$ continuously varies with $\gamma_1$ and~$\gamma_2$ as long as $v _1$ and~$v_2$ are not collinear.
In particular, by rotating~$v_1$ to~$v_2$ and $v_2$ to~$-v_1$, we deform~$\gamma_1$ to~$\gamma_2$ and $\gamma_2$ to~$-\gamma_1$ through two homotopies of simple closed geodesics~$\gamma_1^t$ and~$\gamma_2^t$.
Through this process, the digon~$D$ bounded by the two arcs of~$\gamma_1$ and~$\gamma_2$ joining~$p$ to~$q$ and directed by~$v_1$ and~$v_2$ deforms through a family of digons~$D_t$ bounded by the two arcs of~$\gamma_1^t$ and~$\gamma_2^t$ joining~$p$ to some point~$q_t \in \gamma_1^t \cap \gamma_2^t$ and directed by~$v_1^t$ and~$v_2^t$.
The digon~$D_t$ is a connected component of $M \setminus (\gamma_1^t \cup \gamma_2^t)$.
By construction, the point~$q_t$ is the first point of intersection of~$\gamma_1^t$ and~$\gamma_2^t$ when travelling along these two geodesics from~$p$ in the directions of $v_1^t$ and~$v_2^t$.
At the final time~$t=1$, the geodesics $\gamma_1^1$ and~$\gamma_2^1$ agree with~$\gamma_2$ and~$-\gamma_1$.
Hence, $q_1$ agrees with~$q$.
This implies that the two digons~$D$ and~$D_1$, which are connected components of $M \setminus (\gamma_1 \cup \gamma_2)$, form a disk with boundary~$\gamma_1$, split by an arc of~$\gamma_2$.
As a result, the closed geodesics $\gamma_1$ and~$\gamma_2$ have exactly two intersection points, namely $p$ and~$q$.
Hence, $k=2$.
\forget

Let $\beta$ be a closed geodesic (transversaly) intersecting~$\gamma$.
Denote by~$k$ the number of intersection points between~$\gamma$ and~$\beta$.
Consider the space~$\Gamma_{\gamma,k}$ of closed geodesics $\gamma \neq \gamma$ intersecting~$\gamma$ at exactly $k$ (transverse) points.

The space~$\Gamma_{\gamma,k}$ containing~$\beta$ is nonempty.
It is clearly open in~$\Gamma \setminus \{ \gamma \}$ as every closed geodesic close enough to~$\gamma \in \Gamma_{\gamma,k}$ still has exactly $k$ transverse intersection points with~$\gamma$.
It is also closed in~$\Gamma \setminus \{ \gamma \}$.
Indeed, the intersection points of~$\gamma$ with another closed geodesic~$\gamma$ cannot be arbitrarily close.
Thus, the transverse intersection points between~$\gamma$ and~$\gamma \in \Gamma_{\gamma,k}$ cannot ``cancel out" when $\gamma$ converges to a closed geodesic~$\bar{\gamma}$ different from~$\gamma$.
Similarly, extra transverse intersection points cannot appear when $\gamma$ converges to~$\bar{\gamma}$.
Hence, the closed geodesic~$\bar{\gamma}$ lies in~$\Gamma_{\gamma,k}$.
By connectedness of~$\Gamma \setminus \{\gamma\}$, we conclude that $\Gamma_{\gamma,k} = \Gamma \setminus \{\gamma\}$.
That is, every closed geodesic different from~$\gamma$ intersects~$\gamma$ at exactly $k_\gamma$ points.

Denote $\Gamma_\gamma = \Gamma_{\gamma,k}$.
Observe that the integer~$k_\gamma$ does not depend on~$\gamma$.
Indeed, for every $\gamma_1,\gamma_2 \in \Gamma$ with $\gamma_1 \neq \gamma_2$, we have $|\gamma_1 \cap \gamma_2|=k_{\gamma_1}$ since $\gamma_2 \in \Gamma_{\gamma_1}$, and, by symmetry, $|\gamma_1 \cap \gamma_2|=k_{\gamma_2}$ since $\gamma_1 \in \Gamma_{\gamma_2}$.
Hence, $k_{\gamma_1} = k_{\gamma_2}$.

Thus, the integer~$k$ only depends on the dynamics of~$(M,\nabla)$.
It is clearly greater than or equal to two for topological reasons.

Now, consider a variation of~$\gamma$ by closed geodesics~$c_\lambda$, with $c_0=\gamma$, inducing a nontrivial normal vector field~$Y_\perp$ along~$\gamma$.
The closed geodesics~$c_\lambda$ intersect~$\gamma$ at $k$ points.
Therefore, the normal vector field~$Y_\perp$ induced by this geodesic variation of~$\gamma$ vanishes at $k$ points.
From Theorem~\ref{theo:LM}, we have~$k=2$.
\forgotten
\end{proof}

%\section{Normal Jacobi vector fields on Zoll Finsler two-spheres} \label{sec:jacobi}

In the rest of this section, we introduce the (non-metric) notion of normal vector fields along simple loops on a surface.
We also determine the number of zeros of nontrivial normal vector fields defined by geodesic variations on a Zoll Finsler two-sphere.

\begin{definition} \label{def:Y}
Given a closed surface~$M$, let $c:S^1 \times (-\varepsilon,\varepsilon) \to M$ be a smooth map inducing a smooth variation of embedded curves $c_\lambda= c(.,\lambda)$ with $\lambda \in (-\varepsilon,\varepsilon)$.
Here, the curves~$c_\lambda$ are not necessarily geodesics.
Define the following vector field $Y \in \Gamma(c_0^* TM)$ along~$c_0$ as
\[
Y(\theta) = \frac{\partial c}{\partial \lambda}(\theta,0)
\]
for every $\theta \in S^1$.
When the curves~$c_\lambda$ are geodesics for some Finsler metric~$F$ on~$M$, the vector field~$Y$ represents the Jacobi field along~$c_0$ generated by the geodesic variation~$(c_\lambda)$, \cf~\cite[\S11.2]{shen}.
The vector field~$Y$ induces a \emph{normal vector field} $Y_\perp$ along~$c_0$ defined as 
\[
Y_\perp(\theta) \equiv Y(\theta) \mbox{ mod } \R.c_0'(\theta)
\]
for every $\theta \in S^1$, where $Y_\perp(\theta)$ lies in the quotient of the plane~$T_{c_0(\theta)} M$ by the vector line $\R .c_0'(\theta)$ generated by~$c_0'(\theta)$.
On an orientable surface, a normal vector field along~$c_0$ is merely a function.
\end{definition}

%%% To add
%\begin{remark}
%The normal vector fields~$Y_\perp$ along~$c_0$ can also be defined as the orthogonal component of~$Y$ with respect to the inner product~$g_{c_0'(\theta)}$, \cf~Definition~\ref{def:finsler}.
%However, in the future, we need to consider normal vector fields on surfaces without reference to any Finsler metric.
%For this reason, it is more convenient to work in the quotient of the plane~$T_{c_0(\theta)} M$ by the line $\R.c_0'(\theta)$ rather than in the line orthogonal to~$\R.c_0'(\theta)$.
%For the same reason, the curve variations we consider are not necessarily geodesics.
%\end{remark}

We start with the following observation showing that the notion of normal vector field extends to variations of unparametrized (oriented or unoriented) embedded curves.

\begin{lemma}
Let $c_0$ be a curve in a closed manifold~$M$.
The normal vector field~$Y_\perp$ along~$c_0$ induced by a curve variation~$(c_\lambda)$ does not depend on the parametrization of the curves~$c_\lambda$.
\end{lemma}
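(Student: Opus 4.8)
The plan is to make precise what a reparametrization of the family $(c_\lambda)$ is and then to differentiate through it. A general smooth reparametrization is a smooth map $\phi:S^1\times(-\varepsilon,\varepsilon)\to S^1$ such that $\phi_\lambda:=\phi(\cdot,\lambda)$ is a diffeomorphism of $S^1$ for each~$\lambda$; the reparametrized family is $\tilde c(\theta,\lambda)=c(\phi(\theta,\lambda),\lambda)$, so that each $\tilde c_\lambda=c_\lambda\circ\phi_\lambda$ traces the same image as~$c_\lambda$. Note that the $\lambda$-dependence of~$\phi$ is allowed, so this genuinely covers all reparametrizations of the family and not merely of the individual curves. I would first treat the case $\phi_0=\mathrm{id}$, so that the base curve $\tilde c_0=c_0$ is unchanged and the two normal fields are directly comparable; the general case then follows by a harmless identification explained below.

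The heart of the argument is a one-line chain-rule computation. Writing $c_0'$ for the derivative of~$c$ in its curve-parameter slot evaluated at $\lambda=0$, Definition~\ref{def:Y} gives
\[
\tilde Y(\theta)=\frac{\partial\tilde c}{\partial\lambda}(\theta,0)=c_0'(\phi_0(\theta))\,\frac{\partial\phi}{\partial\lambda}(\theta,0)+Y(\phi_0(\theta)).
\]
In the case $\phi_0=\mathrm{id}$ this reads $\tilde Y(\theta)=\tfrac{\partial\phi}{\partial\lambda}(\theta,0)\,c_0'(\theta)+Y(\theta)$. The first summand is a scalar multiple of~$c_0'(\theta)$, hence lies in the line $\R\cdot c_0'(\theta)$ by which we quotient. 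Reducing modulo $\R\cdot c_0'(\theta)$ therefore yields $\tilde Y_\perp(\theta)=Y_\perp(\theta)$, which is exactly the asserted parametrization-independence.

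For the general case $\phi_0\neq\mathrm{id}$, the base curve is itself reparametrized, $\tilde c_0=c_0\circ\phi_0$, and since $\tilde c_0'(\theta)=\phi_0'(\theta)\,c_0'(\phi_0(\theta))$ spans the same line $\R\cdot c_0'(\phi_0(\theta))$, the normal quotient at $\tilde c_0(\theta)$ coincides with the one at $c_0(\phi_0(\theta))$; reading off the display modulo this line gives $\tilde Y_\perp(\theta)=Y_\perp(\phi_0(\theta))$. Thus $Y_\perp$ transforms as a section of the normal line bundle of the \emph{unparametrized} curve, which is precisely the sense in which it is intrinsic. There is no serious obstacle here: the only substantive point is the observation that reparametrizing in~$\theta$ contributes a purely tangential term, and the remaining care is bookkeeping, namely permitting $\phi$ to depend on~$\lambda$ and verifying that reparametrizing the base preserves the line $\R\cdot c_0'$ so that the comparison of normal fields is well posed.
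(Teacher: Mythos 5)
Your proof is correct and follows essentially the same route as the paper: the same chain-rule computation showing that reparametrization contributes only a tangential term $\tfrac{\partial\phi}{\partial\lambda}\,c_0'$, which dies upon reducing modulo $\R\cdot c_0'$. The paper handles the case $\phi_0\neq\mathrm{id}$ in one stroke (concluding $\bar Y_\perp(\bar\theta)=Y_\perp(\theta)$ at the matched points), whereas you split off $\phi_0=\mathrm{id}$ first, but this is only a presentational difference.
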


\begin{proof}
Consider a variation of curves $\bar{c}_\lambda(\cdot)=c_\lambda(\theta(\cdot,\lambda))$, where $\theta(\cdot,\lambda)$ represents a regular change of parameter.
At $\theta = \theta(\bar{\theta},0)$, the points $\bar{c}_0(\bar{\theta})$ and~$c_0(\theta)$ agree as well as the lines generated by $\bar{c}'_0(\bar{\theta})$ and~$c'_0(\theta)$.
Now, the vector field~$\bar{Y}$ induced by the curve variation~$(\bar{c}_\lambda)$ satisfies
\begin{align*}
\bar{Y}(\bar{\theta}) & = \frac{\partial \bar{c}}{\partial \lambda}(\bar{\theta},0) \\
 & = \frac{\partial c}{\partial \lambda}(\theta,0) + \frac{\partial \theta}{\partial \lambda}(\bar{\theta},0) \, c_0'(\theta) \\
 & \equiv Y(\theta) \mbox{ mod } \R.c_0'(\theta)
\end{align*}
Hence, $\bar{Y}_\perp(\bar{\theta}) = Y_\perp(\theta)$ at the point $\bar{c}_0(\bar{\theta}) = c_0(\theta)$.
\end{proof}

The following property satisfied by every normal Jacobi vector field~$Y_\perp$ of a Zoll Finsler two-sphere can be seen as an infinitesimal version of Theorem~\ref{theo:2}.

\begin{theorem}[{\cite[Theorem~2.15]{LM}}] \label{theo:LM}
Let $(M,F)$ be a Zoll Finsler two-sphere.
Every nontrivial normal Jacobi vector field~$Y_\perp$ induced by a variation of an unparametrized (oriented or unoriented) geodesic~$\gamma$ has exactly two zeros.

Furthermore, the zeros of~$Y_\perp$ are simple, that is, the vector fields $Y_\perp$ and $(Y_\perp)'$ do not simultaneously vanish.
\end{theorem}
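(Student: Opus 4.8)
The plan is to reduce the statement to an elementary fact about a scalar second-order linear ODE and then import the global intersection count from Theorem~\ref{theo:2}. Since the two-sphere is orientable, the normal line bundle along~$\gamma$ is trivial, so (as noted after Definition~\ref{def:Y}) the normal field $Y_\perp$ is a genuine scalar function $y(\theta)$ on the circle, and as a normal Jacobi field it satisfies a second-order linear equation $y'' + a(\theta)\,y' + b(\theta)\,y = 0$ coming from the normal (flag-curvature) part of the Finsler Jacobi equation along~$\gamma$. The \emph{simplicity} of the zeros is then immediate and costs nothing: if $y(\theta_0)=y'(\theta_0)=0$ at some point, uniqueness of solutions of a linear ODE forces $y\equiv 0$, contradicting the hypothesis that $Y_\perp$ is nontrivial. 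In particular the zeros are isolated, hence finite in number on the compact curve~$\gamma$.

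For the count itself I would realize $Y_\perp$ as the first-order term of the normal separation between $\gamma=c_0$ and the nearby geodesics $c_\lambda$. Nontriviality of $Y_\perp$ says exactly that $\lambda\mapsto c_\lambda$ has nonzero velocity in the space of unparametrized geodesics~$\G$, so $c_\lambda\neq\gamma$ as unparametrized geodesics for every small $\lambda\neq 0$. Writing $c_\lambda$ as a normal graph over $\gamma$, the signed normal distance is $\lambda\,y(\theta)+O(\lambda^2)$. Because every zero of $y$ is simple, $y$ changes sign transversally there, producing exactly one intersection point of $c_\lambda$ with $\gamma$ near each zero for all small $\lambda\neq 0$; away from the finitely many zeros the normal distance stays bounded away from zero, so no further intersections occur. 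Hence, for small $\lambda\neq 0$, the number of intersection points of $c_\lambda$ and $\gamma$ equals the number of zeros of $Y_\perp$. By Theorem~\ref{theo:2}, two distinct closed geodesics meet in exactly two points, so $Y_\perp$ has exactly two zeros, as claimed.

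If one prefers a self-contained argument that does not invoke Theorem~\ref{theo:2}, the parity can be obtained intrinsically: since the geodesic flow is a free periodic $S^1$-action, its time-$2\pi$ map is the identity on $SM$, so the linearized return map, i.e.\ the monodromy of the normal Jacobi equation over one period, is the identity. A Pr\"ufer-angle computation with $\psi=\arg(y,y')$ shows that at each zero of $y$ one has $\psi'=-1$, independently of the first-order coefficient~$a$, so all zeros are crossed with the same orientation; triviality of the monodromy then gives $\psi(2\pi)\equiv\psi(0)$, forcing an even number $2m$ of zeros. The genuinely global input is only that $m=1$: that nearby geodesics meet (ruling out $m=0$) and meet in at most two points (ruling out $m\ge 2$), which is precisely Theorem~\ref{theo:2} again (or, equivalently, the topology of $\G\cong S^2$). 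I expect the main obstacle to be making the zero–intersection correspondence rigorous \emph{uniformly} in $\lambda$ near $\lambda=0$ and confirming that a nontrivial $Y_\perp$ truly corresponds to a non-stationary deformation of $\gamma$ in~$\G$; everything else is soft ODE theory.
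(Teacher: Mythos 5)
Your proposal is correct, but it proves the theorem by a genuinely different route than the paper. For the zero count, the paper follows LeBrun--Mason: it builds the map $\varphi\colon SM\to PT\Gamma$ sending a unit vector $v$ to the class of normal Jacobi fields along $\gamma_v$ vanishing at the basepoint of $v$, observes that $\varphi$ is a local diffeomorphism hence a covering, computes its index as $|\pi_1(PT\Gamma)|/|\pi_1(SM)|=4/2=2$ (using $\Gamma\simeq S^2$, $SM\simeq \RP^3$), and reads off that every class of a nontrivial normal Jacobi field has exactly two preimages, i.e.\ $Y_\perp$ has exactly two zeros --- no appeal to Theorem~\ref{theo:2} at all. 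You instead derive the infinitesimal statement \emph{from} the global one: writing $c_\lambda$ as a normal graph $\lambda y(\theta)+O(\lambda^2)$ over $\gamma$ and using simplicity of the zeros plus compactness to show that, for small $\lambda\neq 0$, the intersection count $|c_\lambda\cap\gamma|$ equals the number of zeros of $y$, then invoking Theorem~\ref{theo:2}. This is legitimate and not circular, since the paper's proof of Theorem~\ref{theo:2} (the connectedness and digon argument) does not use Theorem~\ref{theo:LM}; and your prerequisite step --- nontrivial $Y_\perp$ means nonzero velocity in $\G$, hence $c_\lambda\neq\gamma$ for small $\lambda\neq0$ --- is justified by the standard identification of $T_\gamma\Gamma$ with normal Jacobi fields that the paper itself uses. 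The trade-offs: your argument is more elementary (soft ODE theory plus an implicit-function/compactness argument), but it forces a change of logical order --- simplicity must be established \emph{before} the count, since the graph argument needs transversal sign changes, whereas in the paper simplicity is an independent afterthought --- and it makes Theorem~\ref{theo:LM} a corollary of Theorem~\ref{theo:2} rather than a parallel ``infinitesimal version'' proved on its own; the paper's covering argument is self-contained, carries over to the more general setting of Zoll affine connections treated by LeBrun--Mason, and yields the constant ``two'' directly from the topology of $SM$ and $PT\Gamma$. One cosmetic discrepancy: you posit an equation $y''+a(\theta)y'+b(\theta)y=0$, while the paper records the normal Jacobi equation in the form $Y_\perp''+\kappa\,Y_\perp=0$ with no first-order term; this is immaterial for the uniqueness argument you (and the paper) use to get simplicity of zeros.
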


\begin{proof}
The first statement of the proposition is established in~\cite[Theorem~2.15]{LM} for Zoll torsion-free affine connexions on the two-sphere.
The arguments carry over in our setting.
For the sake of the reader and since the arguments are so elegant, we briefly reproduce them.

Without loss of generality, we can assume that $\gamma$ is an unparametrized oriented geodesic.
Consider the quotient submersion $q:SM \to \Gamma$ induced by the geodesic flow of~$F$, where $\Gamma = \Gamma_F$ represents the space of unparametrized oriented geodesics of~$F$, \cf~\eqref{eq:q}.
Denote by~$PT\Gamma$ the projectivized tangent space of~$\Gamma$.
The submersion $q:SM \to \Gamma$ factors through a map $\varphi:SM \to PT\Gamma$ under the canonical projection $PT\Gamma \to \Gamma$.
That is, the following diagram is commutative

\begin{center}
\mbox{ } \hfill
\xymatrix{
 & PT\Gamma \ar[d] \\
SM \ar[ru]^\varphi \ar[r]_q & \Gamma
}
\hfill \mbox{ }
\end{center}

The map~$\varphi$ can be defined as follows.
Identify the tangent plane of~$\Gamma$ at~$\gamma$ with the space of normal Jacobi fields along~$\gamma$, \cf~\cite[Proposition~2.13]{besse}.
With this identification, the map~$\varphi$ takes a unit vector $v \in S_x M$ with basepoint~$x \in M$ to the class of normal Jacobi vector fields along~$\gamma_v$ vanishing at~$x$.
Note that the map~$\varphi$ takes every orbit of the geodesic flow of~$F$ to a different fiber of the projection $PT \Gamma \to \Gamma$.
Observe also that the map~$\varphi$ is a local diffeomorphism and so a covering since $SM$ is compact.

The index of the covering is given by
\[
\frac{|\pi_1(PT \Gamma)|}{|\pi_1(SM)|} = \frac{4}{2} = 2
\]
since $\Gamma \simeq S^2$ and $SM \simeq \RP^3$.

Now, two vectors $u$ and~$v$ of~$SM$ are sent by~$\varphi$ to the same class~$[Y_\perp]$ of a nontrivial normal Jacobi field~$Y_\perp$ along a geodesic~$\gamma$ if and only if $\gamma_u$ and~$\gamma_v$ represent the same unparametrized oriented geodesic~$\gamma$, and their basepoints are zeros of~$Y_\perp$.

By definition of the index of a covering, every class of a nontrivial normal Jacobi field along~$\gamma$ has two preimages by~$\varphi$.
It follows that $Y_\perp$ has exactly two zeros.

\medskip

For the second statement, recall that every Jacobi field~$Y$ along~$c_0$ satisfies a second-order linear differential equation, and so does the normal vector field~$Y_\perp$, \cf~\cite[Equation~(3)]{LM} or~\cite[\S11.2]{shen}, namely
\[
Y''_\perp + \kappa \, Y_\perp = 0
\]
where $\kappa$ is a smooth function.
Hence, the zeros of~$Y_\perp$ are simple unless $Y_\perp$ is trivial.
\end{proof}

\section{Curvature flow and circle action deformations on the unit tangent bundles of Zoll Finsler two-spheres} \label{sec:flow}

\forget
We define a natural circle bundle over~$M$ on which $S^1$ acts under the geodesic flow of~$\nabla$ as follows.

\begin{definition}
For every $v \in TM$, denote by $\gamma_v:\R \to M$ the affine geodesic induced by~$v$, that is, with $\gamma'_v(0)=v$.
By definition of a Zoll manifold, the affine geodesic~$\gamma_v$ is periodic with minimal period~$T=T(v)$.
From~\cite[Proposition~2.6]{LM}, the minimal period~$T$ is smooth with respect to~$v \in TM$.
Since $\gamma_{\lambda v}(s) = \gamma_v(\lambda s)$ for every $s, \lambda \in \R$, the affine geodesic~$\gamma_{\lambda v}$ is periodic with minimal period~$\frac{T}{\lambda}$.
Therefore, by the submersion theorem, there exists a sub-bundle $S M \subset TM$ over~$M$, namely a circle bundle, such that every affine geodesic~$\gamma_v$ with $v \in S M$ is periodic with minimal period~$2\pi$.
That is, the geodesic flow of~$\nabla$ induces a smooth free action of~$S^1=\R/2\pi \Z$ on the circle bundle~$S M$ defined as
\begin{equation} \label{eq:action1}
\begin{array}{ccc}
S^1 \times S M & \to & S M \\
(s,v) & \mapsto & \gamma_v'(s)
\end{array}
\end{equation}
for every $v \in S M$ and $s \in S^1=\R/2\pi \Z$.

\medskip

We do not claim that the restriction of~$S M$ to any tangent plane of~$M$ bounds a convex set.
However, when the Zoll connection~$\nabla$ arises from a Zoll Finsler metric~$F$ of geodesic length~$2\pi$, the circle bundle~$S M$ agrees with the unit tangent bundle~$SM$ of~$(M,F)$.

\medskip

Let $(M,F)$ be a Zoll Finsler two-sphere.
Since every geodesic on~$(M,F)$ is periodic with the same minimal period~$2\pi$, the geodesic flow of~$F$ induces a smooth free action of~$S^1=\R/2\pi \Z$ on the unit tangent bundle~$U_F M$ of~$M$ with respect to the Finsler metric~$F$:
\[
\varphi_F : U_F M \times S^1 \to U_F M
\]
with
\begin{equation} \label{eq:phiF}
\varphi_F(v,s) = \gamma'_v(s)
\end{equation}
for every $v \in U_F M$ and $s \in S^1=\R/2\pi \Z$, where $\gamma_v$ is the arclength parametrized $F$-geodesic induced by~$v$, that is, $\gamma'_v(0)=v$.

For every $v \in U_F M$ and $s \in S^1$, the geodesics $\gamma_v$ and~$\gamma_w$ induced by~$v$ and $w=\varphi_F(v,s)$ represent the same oriented (unparametrized) simple closed geodesic.

Let $\Gamma = U_F M / S^1$ be the space of oriented unparametrized closed geodesics on the Zoll Finsler two-sphere~$(M,F)$.
The space~$\Gamma$ is diffeomorphic to the projective plane~$S^2$.
\forgotten

By analyzing the parabolic partial differential equation satisfied by the curvature flow of the canonical round two-sphere, we show that this flow induces an isotopy of diffeomorphisms of the unit tangent bundles of balanced Zoll Finsler spheres. 
As mentioned in the introduction, several results of this section can be derived from~\cite{hsu}.

\medskip

Let $(M,F)$ be a Zoll Finsler two-sphere.
The unit tangent bundle~$SM$ of~$(M,F)$ naturally identifies with the unit tangent bundle $S_0 M$ of the canonical two-sphere~$(M,g_0)$ by radial projection on each tangent plane.
More precisely, the  two radial projections 
\begin{equation} \label{eq:flat}
\pi_\flat:S M \to S_0 M
\end{equation}
and 
\begin{equation} \label{eq:sharp}
\pi_\sharp:S_0 M \to S M
\end{equation}
are reciprocal to each other.
We will refer to these diffeomorphisms as \emph{musical diffeomorphisms}.
With this identification, the smooth free action of~$S^1$ on~$S M$ given by the geodesic flow~$F$ induces a smooth free action on~$S_0 M$ by conjugacy by the musical diffeomorphisms $\pi_\flat$ and~$\pi_\sharp$.
This $S^1$-action is denoted by
\[
\rho:S^1 \to \Diff(S_0 M)
\]
and defined as
\begin{equation} \label{eq:rho}
\rho(\theta)(v) = \pi_\flat[\gamma_{\pi_\sharp(v)}'(\theta)]
\end{equation}
for every $\theta \in S^1$ and $v \in S_0 M$.
Observe that the orbits of the actions of~$S^1$ on~$S M$ and~$S_0 M$ project down to embedded closed curves in~$M$, namely the $F$-geodesics~$\gamma_v$.

\begin{definition}
A Zoll Finsler two-sphere $(M,F)$ is \emph{balanced} if every $F$-geodesic of~$M$ divides the round sphere into two domains $D_1$ and~$D_2$ with the same $g_0$-area, where $g_0$ is the canonical round metric.
This property is satisfied if $(M,F)$ is invariant under the antipodal map, that is, if it is the orientable double cover of a Zoll Finsler projective plane.
\end{definition}

\begin{remark}
We introduce the notion of balanced Zoll Finsler two-sphere for the following reason.
On a balanced Zoll Finsler two-sphere~$(M,F)$, the simple closed geodesics~$\gamma_v$ induced by the vectors $v \in S M$ converge to equators of the canonical round sphere~$(M,g_0)$ through the curvature flow~$\gamma_v^t$ of~$(M,g_0)$, \cf~Theorem~\ref{theo:prop}.\eqref{c5}.
While if $(M,F)$ is not balanced, the convergence does not hold anymore since a simple closed curve not dividing the round sphere into two domains with the same area %total curvature 
shrinks to a point through the curvature flow of the round sphere, \cf~Theorem~\ref{theo:prop}.\eqref{c6}.
\end{remark}

On every balanced Zoll Finsler two-sphere~$(M,F)$, consider the map 
\[
\Psi_t: S_0 M \to S_0 M
\]
defined as
\[
\Psi_t(v) = \pi_\flat [ (\gamma_{\pi_\sharp(v)}^t)'(0) ]
\]
for every $v \in S_0 M$ and $t \in [0,\infty)$.
Here, $\gamma_v$ is the $F$-geodesic induced by~$v$ and $(\gamma_v^t)$ is the curvature flow of~$\gamma_v$ on the canonical round sphere, \cf~Definition~\ref{def:flow}.
Note that $\Psi_0$ is the identity map on~$S_0 M$.

\forget
Similar to~\eqref{eq:PhiF0}, for every balanced Zoll two-sphere, we define
\begin{align*}
S_0 M \times S^1 \times [0,\infty) & \to S_0 M \\
(v,s,t) & \mapsto \Phi_\nnabla^t(v,s) = \pi_\flat [ (\gamma_{\pi_\sharp(v)}^t)'(s) ]
\end{align*}
Note that $\Phi_\nnabla^0=\Phi_\nnabla$.

Consider the map $\Psi: S_0 M \times [0,\infty) \to S_0 M$ defined as
\[
\Psi(v,t) = \Phi_\nnabla^t(v,0)
\]
for every $v \in S_0 M$ and $t \in [0,\infty)$.

Let $\Psi_t=\Psi(\cdot,t): S_0 M \to S_0 M$.

\begin{theorem}
Let $(M,\nabla)$ be a balanced Zoll two-sphere.
Then the maps
\[
\Psi_t: S_0 M \to S_0 M
\]
induced by the curvature flow define a one-parameter family of diffeomorphisms with $\Psi_0=\id$.

Furthermore, the canonical projection $\Pi:S_0 M \to M$ takes every orbit of the $\Phi^t_\nnabla$-action to an embedding of~$S^1$ into~$M$.
\end{theorem}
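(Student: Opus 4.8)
The plan is to regard $(\Psi_t)$ as a smooth isotopy of self-maps of the closed orientable $3$-manifold $S_0 M \cong \RP^3$ with $\Psi_0 = \id$, and then to upgrade the local statement ``$\Psi_t$ is a local diffeomorphism'' to the global one ``$\Psi_t$ is a diffeomorphism'' by a degree argument. First I would check that $\Psi_t$ is defined and smooth for every $t \in [0,\infty)$. Since $(M,F)$ is balanced, each $F$-geodesic $\gamma_{\pi_\sharp(v)}$ divides the round sphere into two domains of equal $g_0$-area; by Theorem~\ref{theo:prop}.\eqref{c5} its curvature flow $\gamma_{\pi_\sharp(v)}^t$ exists for all $t$, stays embedded (hence regular, so that $\pi_\flat$ can normalize its velocity), and never collapses to a point. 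Smooth dependence of the solution of $\partial_t \gamma_t = \kappa\,\mathfrak{n}$ on its initial data, together with the smoothness of the geodesic flow $v \mapsto \gamma_{\pi_\sharp(v)}$ and of the musical diffeomorphisms $\pi_\sharp,\pi_\flat$, makes $(v,t) \mapsto \Psi_t(v)$ jointly smooth. Finally $\Psi_0 = \id$ because $\gamma_{\pi_\sharp(v)}^0 = \gamma_{\pi_\sharp(v)}$ gives $(\gamma_{\pi_\sharp(v)}^0)'(0) = \pi_\sharp(v)$ and $\pi_\flat \circ \pi_\sharp = \id$.

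The heart of the proof is that $d\Psi_t$ is everywhere nonsingular. I would use the fibration $q : S_0 M \to \Gamma_F$ onto the space of oriented geodesics to split $T_v S_0 M$ into the vertical line (sliding the marked point along $\gamma_{\pi_\sharp(v)}$) and a horizontal plane identified, via \cite[Proposition~2.13]{besse}, with the space of normal Jacobi fields along $\gamma_{\pi_\sharp(v)}$; by Theorem~\ref{theo:LM} a nonzero horizontal vector corresponds to a nontrivial normal Jacobi field $Y_\perp$, which has exactly two simple zeros. Under the flow, the marked point traces the embedded curve $\gamma_{\pi_\sharp(v)}^t$ through the (injective) Lagrangian parametrization, so a vertical vector is sent to a nonzero vector tangent to that curve. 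A horizontal vector induces a normal variation $Y_\perp^t$ of $\gamma_{\pi_\sharp(v)}^t$ solving the linearization of the curvature flow, a linear parabolic equation of Jacobi type; by backward uniqueness for such equations $Y_\perp^t$ cannot vanish identically for any $t$, so the unparametrized flowed curve genuinely moves and $d\Psi_t$ of the horizontal vector has nonzero horizontal component. Conversely, if $d\Psi_t(\xi) = 0$ then the unparametrized curve does not move, i.e. $Y_\perp^t \equiv 0$, whence $Y_\perp \equiv 0$ and $\xi$ is vertical; the vertical computation then forces $\xi = 0$. Thus $d\Psi_t$ is injective, and by equality of dimensions an isomorphism.

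Once $\Psi_t$ is a local diffeomorphism, it is a covering map because $S_0 M$ is compact and connected. Since $\Psi_0 = \id$ has degree one and the degree is invariant along the isotopy $(\Psi_t)$, every $\Psi_t$ has degree one and is therefore a diffeomorphism; equivalently, $\RP^3$ admits no connected self-covering of degree greater than one. The remaining assertion is immediate: the $\Phi_\nnabla^t$-orbit through $v$, namely $\{ \pi_\flat[(\gamma_{\pi_\sharp(v)}^t)'(\theta)] : \theta \in S^1 \}$, projects under $\Pi$ onto the flowed curve $\gamma_{\pi_\sharp(v)}^t$, which is an embedded circle by Theorem~\ref{theo:prop}.\eqref{c2}. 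The main obstacle is the nonsingularity of $d\Psi_t$ on the horizontal distribution: one must rule out that a nontrivial Jacobi variation degenerates into a pure reparametrization of the flowed curve, and it is precisely the parabolic backward-uniqueness principle (rather than any abstract deformation argument) that supplies this, using the long-time existence and embeddedness furnished by Theorem~\ref{theo:prop}.
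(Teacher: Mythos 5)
Your overall architecture matches the paper's: smoothness and long-time existence of the flowed curves from the balanced hypothesis, nonsingularity of $d\Psi_t$ as the core step, then the covering-space endgame (the paper argues via $\pi_1$-injectivity preserved under homotopy, you argue via degree --- both are fine), and Theorem~\ref{theo:prop}.\eqref{c2} for the statement about orbits. The vertical part of your kernel analysis is also correct. But the horizontal part, which you yourself single out as the heart of the matter, has a genuine gap, and the parabolic backward-uniqueness principle cannot close it.

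The point is that $\Psi_t$ only records the $1$-jet at the marked point $\theta=0$ of the flowed curve: writing the flowed curves as graphs $\uu(\cdot,t,\lambda)$ in normal coordinates, as the paper does, the condition $d\Psi_t(\xi)=0$ amounts to the simultaneous vanishing of $\vv=\uu_\lambda$ and $\vv_x$ at the marked point, i.e.\ to the normal variation field $Y_\perp^t$ having a \emph{multiple zero} at $\theta=0$. This is perfectly compatible with $Y_\perp^t\not\equiv 0$, so your inference ``$Y_\perp^t\not\equiv0$, hence the unparametrized flowed curve genuinely moves, hence $d\Psi_t(\xi)$ has nonzero horizontal component'' does not follow: a globally nontrivial normal variation whose $2$-jet degenerates at the marked point still kills $d\Psi_t(\xi)$, and ruling this out is precisely the difficulty. (Moreover, speaking of a ``horizontal component'' at time $t>0$ presupposes that the flowed curves form a smooth fibration of $S_0M$, which is essentially equivalent to the theorem being proved, so that reading of your argument is circular.) What is needed is a Sturmian zero-counting argument, not backward uniqueness: by Theorem~\ref{theo:ang-crelle}, the number of zeros of the solution $\vv$ of the linearized flow equation is finite, nonincreasing in $t$, and drops strictly whenever a multiple zero occurs; it equals two, with simple zeros, at $t=0$ by Theorem~\ref{theo:LM}; and it is at least two for all $t$ because both flowed curves bisect the $g_0$-area, so the variation cannot be one-signed --- this is the second, essential use of the balanced hypothesis, which your proposal invokes only for long-time existence. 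Without that lower bound, a multiple zero could simply drop the count from two to zero. With it, the count is constantly two, so no multiple zero ever occurs, and only then is $d\Psi_t$ injective; this is exactly Lemma~\ref{lem:2} and Proposition~\ref{prop:immersion} in the paper.
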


\begin{remark}
The curvature flow on the round sphere induces a one-parameter family of diffeomorphisms 
\[
\Psi_t: S_0 M \to S_0 M
\]
between $\Psi_0=\id$ and $\Psi_\infty=\Psi$ such that the geodesic flows of $\nabla$ and~$\nabla_0$ are conjugate by~$\Psi$, that is, 
\[
\Psi \circ \Phi_\nnabla(v,s) = \Phi_{\nnablao}(\Psi(v,s))
\]
for every $v \in S_0 M$ and $s \in S^1$.

Furthermore, the canonical projection $\Pi:S_0 M \to M$ takes every orbit of the $\Phi_t$-actions to an embedding of~$S^1$ into~$M$, where 
\[
\Phi_t: S_0 M \times S^1 \to S_0 M
\]
represents the free action of~$S^1$ on~$S_0 M$ given by
\[
\Phi_t(v,s) = \Psi_t^{-1} [ \Phi_{\nnablao}(\Psi_t(v),s) ]
\]
for every $v \in S_0 M$ and $s \in S^1$.
\end{remark}

\forgotten

\forget

\medskip

First, we show the following result.

\begin{proposition} \label{prop:inj}
Let $(M,F)$ be a balanced Zoll Finsler two-sphere.
For every $t \in [0,\infty)$, the map $\Psi_t: S_0 M \to S_0 M$ induced by the curvature flow of the round sphere is injective.
\end{proposition}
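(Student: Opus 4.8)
The plan is to read off from the definition exactly what data $\Psi_t(v)$ records, and then rule out coincidences. For $v\in S_0 M$ with basepoint $x$, the vector $\pi_\sharp(v)$ determines a unique oriented $F$-geodesic $\gamma:=\gamma_{\pi_\sharp(v)}$ with $\gamma(0)=x$; write $\gamma^t$ for its curvature flow on the canonical round sphere. Since $(M,F)$ is balanced, Theorem~\ref{theo:prop}.\eqref{c5} guarantees that $\gamma^t$ is defined for all $t\in[0,\infty)$ and never shrinks to a point, so $\Psi_t$ is well defined. By construction, $\Psi_t(v)=\pi_\flat[(\gamma^t)'(0)]$ is nothing but the point $\gamma^t(0)$ together with the oriented tangent direction of $\gamma^t$ there. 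Thus $\Psi_t(v_1)=\Psi_t(v_2)$ forces the two flowed curves $\gamma_1^t$ and $\gamma_2^t$ to pass through a common point with the same oriented tangent line, and I would split the analysis according to whether $v_1$ and $v_2$ lie over the same oriented geodesic.

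If $\pi_\sharp(v_1)$ and $\pi_\sharp(v_2)$ are tangent to the same oriented geodesic $\gamma$, then $\gamma_1^t=\gamma_2^t=\gamma^t$ and the only remaining freedom is the basepoint. Here I would use that, by Theorem~\ref{theo:prop}.\eqref{c2}, each $\gamma^t$ stays embedded, so the normal flow correspondence $\theta\mapsto\gamma^t(\theta)$ is a diffeomorphism from $S^1$ onto $\gamma^t$; hence distinct basepoints of $v_1\neq v_2$ on the embedded oriented geodesic flow to distinct points, giving $\Psi_t(v_1)\neq\Psi_t(v_2)$. The remaining case in which $v_1$ and $v_2$ lie over the same \emph{unoriented} geodesic with opposite orientations is handled by observing that the recorded oriented directions are then opposite at any common point. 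This disposes of everything except the case of two genuinely distinct unoriented geodesics.

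The heart of the argument is to show that the flowed curves of two distinct geodesics never become tangent. First, Theorem~\ref{theo:2} gives that $\gamma_1$ and $\gamma_2$ meet transversally in exactly two points, so their geometric intersection number equals $2$ at $t=0$. I would then combine three facts: (a) a topological lemma, via the Jordan curve theorem and the equal-area condition, that two distinct balanced simple closed curves on $S^2$ must cross, and in fact meet in at least two points, since if they were disjoint or touched only at a single tangential contact then one would be enclosed in a region of area $2\pi$ bounded by the other, forcing it to bound a disk of area strictly less than $2\pi$; (b) Theorem~\ref{theo:prop}.\eqref{c5}, so that $\gamma_1^t$ and $\gamma_2^t$ remain balanced for all $t$; and (c) Angenent's intersection theory for the curve shortening flow on surfaces~\cite{angII}, by which the number of intersection points of two solutions is non-increasing in $t$ and strictly decreases whenever a tangential contact occurs. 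Sandwiching the lower bound from (a) against the initial value from Theorem~\ref{theo:2}, the intersection number is forced to equal $2$ transversally for every $t$, and a tangency at any time would push it below $2$ immediately afterwards, contradicting (a). Hence $\gamma_1^t$ and $\gamma_2^t$ meet only transversally and in particular cannot share a point with a common oriented tangent, which rules out $\Psi_t(v_1)=\Psi_t(v_2)$ and completes the proof.

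The main obstacle I expect is step (c): justifying that the intersection count is genuinely controlled under the flow and, in particular, that no tangency can occur at the specific time $t$ in question. I plan to invoke Angenent's zero-counting and strong maximum principle arguments~\cite{angII} for the parabolic equation governing the flow, and to be careful in the borderline analysis at potential tangency times, ensuring that the area lower bound in (a) survives even when the two curves touch rather than cross cleanly. The embeddedness and disjointness-preservation already recorded in Theorem~\ref{theo:prop} should supply the regularity needed to make the point-correspondence and tangent-line comparisons rigorous.
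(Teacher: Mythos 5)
Your proof is correct and follows essentially the same route as the paper's: the same case split between vectors lying over the same (oriented or unoriented) geodesic and over genuinely distinct geodesics, with preservation of embeddedness under the flow handling the first case, and the combination of Theorem~\ref{theo:2}, Angenent's intersection monotonicity (strict decrease at tangencies), and the balanced equal-area condition forcing exactly two transverse intersections for all time in the second. The only difference is that you spell out the Jordan-curve/equal-area argument behind the lower bound $|\gamma_1^t \cap \gamma_2^t| \geq 2$, which the paper asserts directly from the balanced property.
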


\begin{proof}
By definition, $\Psi_t(.) = \pi_\flat[(\gamma_{\pi_\sharp(.)}^t)'(0)]$, where the maps $\pi_\flat$ and $\pi_\sharp$ are diffeomorphisms.
Thus, the map~$\Psi_t$ is injective if and only if the map 
\[
\Xi_t:S M \to TM
\]
defined as $\Xi_t(v) = (\gamma_v^t)'(0)$ is injective.
Let $v,w \in S M$ with $v \neq w$.

Suppose that $w=\pm \gamma_v'(\tau)$ for some $\tau \in (0,2 \pi)$.
Then the geodesics~$\gamma_v$ and~$\gamma_w$ represent the same unparametrized geodesic.
More precisely, $\gamma_w(\theta) = \gamma_v(\tau \pm \theta)$ for every $\theta \in S^1$.
This relation is preserved under the curvature flow, that is, $\gamma_w^t(\theta) = \gamma_v^t(\tau \pm \theta)$ for every $t \in [0,\infty)$ and $\theta \in S^1$.
Moreover, by Theorem~\ref{theo:prop}.\eqref{c2}, an embedded curve remains embedded under the curvature flow.
Hence, $\gamma_v^t(0)$ is distinct from $\gamma_w^t(0)=\gamma_v^t(\tau)$.
In particular, $(\gamma_v^t)'(0) \neq (\gamma_w^t)'(0)$.

Suppose otherwise, that is, the vectors~$w$ and $\gamma_v'(\tau)$ are not collinear for every~$\tau$.
Then the geodesics $\gamma_v$ and~$\gamma_w$ represent different unparametrized geodesics.
By Theorem~\ref{theo:2}, the closed geodesics $\gamma_v$ and~$\gamma_w$ have exactly two intersection points.
From~\cite{angII}, the number of intersection points does not increase under the curvature flow, that is, $|\gamma_v^t \cap \gamma_w^t| \leq 2$.
Now, since the simple curves $\gamma_v^t$ and $\gamma_w^t$ divide the canonical round sphere into two domains of equal area for $t=0$ and therefore for every $t \geq 0$, \cf~Theorem~\ref{theo:prop}.\eqref{c5}, we have $|\gamma_v^t \cap \gamma_w^t| \geq 2$.
Hence, the curves $\gamma_v^t$ and~$\gamma_w^t$ have exactly two intersection points for every~$t \geq 0$.
It follows from~\cite{angII} that $\gamma_v^t$ and $\gamma_w^t$ do not have any pair of vectors pointing in the same direction (oriented tangency) for any $t \in [0,\infty)$, otherwise their number of intersection points would decrease with~$t$, which is impossible.
Thus, $(\gamma_v^t)'(0) \neq (\gamma_w^t)'(0)$.
\end{proof}

\forgotten

\medskip

We will need the following classical result about the number of zeros of a parabolic partial differential equation.

\begin{theorem}[{see \cite[Theorem~C]{ang-crelle}}] \label{theo:ang-crelle}
Let $u: S^1 \times [0,T] \to \R$ be a bounded solution of the equation
\[
u_t = a(x,t) \, u_{xx} + b(x,t) \, u_x + c(x,t) \, u
\]
where $a$, $a^{-1}$, $a_t$, $a_x$, $a_{xx}$, $b$, $b_t$, $b_x$ and $c$ are bounded functions.
Then, for every $t  \in (0,T)$, the number~$z(t)$ of zeros of $u(.,t)$ is finite.

Furthermore, if both $u$ and $u_x$ vanish at $(x_0,t_0)$ then $z(t_-) > z(t_+)$ for every $t_- < t_0 < t_+$.
That is, the number of zeros decreases whenever a multiple zero occurs.
\end{theorem}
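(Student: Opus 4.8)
\emph{Strategy.} The plan is to reduce the global statement about the zero count $z(t)$ to a purely local analysis of the zero set of $u$ in space-time near an isolated zero. The two engines are the parabolic maximum principle, which prevents zeros from being created, and a local asymptotic expansion of $u$ near each of its zeros, which dictates precisely how many zeros merge and disappear as $t$ crosses a degenerate time. Since $u \equiv 0$ makes the statement vacuous, I assume throughout that $u$ is not identically zero.

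\emph{Finiteness of $z(t)$.} First I would show that for each $t_0 \in (0,T)$ the set of zeros of $u(\cdot,t_0)$ is finite and that each zero has finite order of vanishing in $x$. Suppose the zeros accumulated at some $x_\ast \in S^1$; then $u(\cdot,t_0)$ would vanish to infinite order in $x$ at $x_\ast$. The regularity hypotheses on the coefficients $a$, $a^{-1}$, $a_t$, $a_x$, $a_{xx}$, $b$, $b_t$, $b_x$, $c$ guarantee enough interior regularity to invoke a strong unique continuation property for the parabolic operator, which I would obtain from an Almgren--Poon parabolic-frequency monotonicity in the backward self-similar variables centered at $(x_\ast,t_0)$: a solution vanishing to infinite order at an interior point must vanish identically on $\{t \le t_0\}$, forcing $u \equiv 0$ and contradicting our assumption. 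The same monotonicity attaches to every genuine zero $(x_0,t_0)$ a finite integer $m = m(x_0,t_0) \ge 1$, its order of vanishing in $x$.

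\emph{Local nodal structure.} With the order $m$ in hand, I would establish the leading asymptotics of $u$ near $(x_0,t_0)$. After translating to $x_0 = 0$, $t_0 = 0$, normalizing $a(0,0) = 1$, and parabolically rescaling $u_\lambda(x,t) = u(\lambda x, \lambda^2 t)/M(\lambda)$ with $M(\lambda)$ the supremum of $|u|$ over the parabolic cylinder of radius $\lambda$, standard interior parabolic estimates make the family $\{u_\lambda\}$ precompact, and every limit solves the constant-coefficient heat equation $u_t = u_{xx}$ and is parabolically homogeneous of degree $m$ under $(x,t) \mapsto (\mu x, \mu^2 t)$. The only nonzero such solutions are multiples of the heat polynomial $v_m$ (thus $v_2 = x^2 + 2t$, $v_3 = x^3 + 6tx$, and so on). Transferring the nodal set of $v_m$ back to $u$ by a continuity-and-degree argument, the zeros of $u(\cdot,t)$ near $x_0$ number $m$ for $t < t_0$ and either $0$ or $1$, according to the parity of $m$, for $t > t_0$. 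In particular this local count never increases, and it strictly drops exactly when $m \ge 2$, that is, exactly when $(x_0,t_0)$ is a multiple zero where $u$ and $u_x$ vanish simultaneously. Summing these contributions over the finitely many zeros yields both global monotonicity of $z$ and the asserted strict inequality $z(t_-) > z(t_+)$ whenever a multiple zero occurs.

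\emph{Main obstacle.} The hard part will be the local asymptotic analysis: proving that each zero has finite order of vanishing and that the parabolic blow-up converges to a nontrivial heat polynomial whose nodal structure can be pushed back to the variable-coefficient solution $u$. This is precisely where the coefficient regularity is consumed, since it is what permits the frequency monotonicity and the compactness that reduce the problem to the constant-coefficient heat equation in the limit. By contrast, the maximum-principle bookkeeping and the finiteness of $z(t)$ are comparatively routine once this local picture is secured.
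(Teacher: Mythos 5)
First, a point of order: the paper does not prove this statement at all — it is quoted as a black box from Angenent \cite{ang-crelle} and invoked later (in the proof of Proposition~\ref{prop:immersion}). So the real comparison is with Angenent's original argument, which is Sturmian in character: one uses the stated bounds on $a_t, a_x, a_{xx}, b_t, b_x$ to change dependent and independent variables and reduce the equation to a canonical form $u_t = u_{xx} + \tilde{c}\,u$ with $\tilde{c}$ bounded, and then analyzes the local zero set near a multiple zero by maximum-principle and barrier arguments; no frequency function and no blow-up classification appear (Poon's parabolic frequency postdates this result). Your strategy is therefore a genuinely different, modern route, and in broad outline such a nodal-set analysis can be carried out — but as written it has two genuine gaps.

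The first gap is regularity, and it is fatal to your finiteness step as organized. The hypotheses give only finitely many bounded derivatives of the coefficients, and $c$ is merely bounded, so interior parabolic estimates make $u$ only finitely differentiable in $x$ (roughly $C^{1,\alpha}$ with $u_{xx}$ in $L^p$); consequently, zeros of $u(\cdot,t_0)$ accumulating at $x_\ast$ yield, via Rolle, vanishing of $u$ and $u_x$ (and little more) at $x_\ast$ — \emph{not} vanishing to infinite order. Moreover, even granting finite-order spatial vanishing, vanishing in $x$ on a single time slice is not the space-time (parabolic) vanishing that strong unique continuation consumes; relating the two already requires the local asymptotics you are trying to establish, so the logic is circular as arranged. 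Finally, the Almgren--Poon monotonicity is proved for the heat equation; its extension to variable coefficients of exactly this roughness (Lipschitz leading coefficient, bounded zeroth-order term, à la Escauriaza--Fern\'andez--Vessella) is substantially heavier machinery than the theorem being proved, which inverts the difficulty rather than resolving it. The second gap is the transfer of the exact zero count: a ``continuity-and-degree argument'' controls only the \emph{parity} of the number of zeros, not the exact count $m$ before and $0$ or $1$ after. To get exact counts you need (a) simplicity of the real roots of the heat polynomials $v_m(\cdot,\mp 1)$ (true, but must be used explicitly), (b) $C^1$ convergence of the rescalings, and, crucially, (c) a statement uniform across scales ruling out zeros of $u(\cdot,t)$ in the intermediate region between scale $\sqrt{|t-t_0|}$ and the fixed neighborhood size — convergence of the rescaled family at each fixed scale says nothing about zeros living between scales. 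This last point is exactly where uniqueness of the blow-up and uniform doubling estimates are consumed, and in your sketch it is asserted rather than proved.
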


We can now show the following result.

\begin{proposition} \label{prop:immersion}
Let $(M,F)$ be a balanced Zoll Finsler two-sphere.
For every $t \in [0,\infty)$, the map $\Psi_t: S_0 M \to S_0 M$ induced by the curvature flow of the canonical round sphere is an immersion.
\end{proposition}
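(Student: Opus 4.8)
The plan is to show directly that the differential of $\Psi_t$ is injective at every $v\in S_0M$. Writing $w=\pi_\sharp(v)\in SM$, the vector $\Psi_t(v)$ records the point $\gamma_w^t(0)\in M$ together with the $g_0$-direction of $(\gamma_w^t)'(0)$, so for a tangent vector $\xi\in T_vS_0M$ the condition $d\Psi_t(\xi)=0$ means precisely that the footpoint $\gamma_w^t(0)$ is stationary and that the tangent direction at that point does not rotate. Via the diffeomorphism $\pi_\sharp$, I split $\xi$ into a component along the geodesic flow orbit and a transverse component; let $(w_\lambda)$ realize $\xi$, set $U(s)=\partial_\lambda\gamma_{w_\lambda}^t(s)|_{\lambda=0}$, and let $u(s,t)$ be the $g_0$-normal component of $U$ written in the Frenet frame of $\gamma_w^t$. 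Reading the footpoint condition $U(0)=0$ and the direction condition off the normal frame gives $u(0,t)=0$ and $u_s(0,t)=0$, that is, $u(\cdot,t)$ has a \emph{double zero} at $s=0$; in the purely tangential case $u\equiv0$, and then $U(0)=0$ forces the reparametrization speed to vanish, whence $\xi=0$. Thus a nonzero kernel vector must have nonzero transverse component and produce a nontrivial $u$ with a double zero.

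A nonzero transverse component corresponds to a genuine variation of the $F$-geodesic $\gamma_w$, so its normal Jacobi field $Y_\perp=u(\cdot,0)$ is nontrivial and, by Theorem~\ref{theo:LM}, has exactly two simple zeros. Linearizing the curvature flow $\partial_t\gamma=\kappa\,\mathfrak{n}$ in a fixed $S^1$-parametrization shows that $u$ solves a linear parabolic equation $u_t=a\,u_{ss}+b\,u_s+c\,u$; on a balanced sphere the flow exists for all time and converges to an equator with controlled geometry (Theorem~\ref{theo:prop}.\eqref{c5}), so on every finite interval $[0,T]$ the coefficients satisfy the boundedness hypotheses of Theorem~\ref{theo:ang-crelle}. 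Consequently the number $z(t)$ of zeros of $u(\cdot,t)$ is finite, nonincreasing, and strictly decreases whenever a multiple zero occurs, with $z(0)=2$.

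The two bounds now collide. Since $(M,F)$ is balanced, each $\gamma_{w_\lambda}$ and hence each flowed curve $\gamma_{w_\lambda}^t$ bounds two $g_0$-domains of equal area (Theorem~\ref{theo:prop}.\eqref{c5}); differentiating the constant enclosed area in $\lambda$ yields $\int_{\gamma_w^t}u\,ds=0$, so $u(\cdot,t)$ has vanishing mean and therefore at least two zeros. As a nontrivial solution of a linear parabolic equation cannot vanish identically at a single time, $u(\cdot,t)$ stays nontrivial, whence $z(t)\ge 2$ for all $t$. Combined with $z(t)\le z(0)=2$, this forces $z(t)\equiv 2$ with no multiple zero ever occurring, contradicting the double zero found above. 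Hence $\ker d\Psi_t=0$ and $\Psi_t$ is an immersion. I expect the main obstacle to lie in making the second paragraph rigorous: deriving the linearized evolution equation for $u$ with the precise coefficient regularity demanded by Theorem~\ref{theo:ang-crelle}, and verifying carefully that the vanishing of $d\Psi_t(\xi)$ translates into a genuine double zero of $u$ rather than a mere simple degeneracy of the variation field.
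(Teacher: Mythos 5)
Your proposal is correct and follows essentially the same route as the paper's proof: a kernel vector of $d\Psi_t$ is shown to produce a double zero of the normal variation field along the flowed curve, which satisfies a linear parabolic equation, so Angenent's zero-counting theorem (Theorem~\ref{theo:ang-crelle}), the initial count of two simple zeros from Theorem~\ref{theo:LM}, and the lower bound of two zeros forced by the balanced hypothesis combine to give a contradiction. The differences are only cosmetic: you work intrinsically with the Frenet-frame normal component $u$ and a mean-zero integral identity where the paper uses graph coordinates $\vv=\uu_\lambda$ and a one-sidedness argument, and you explicitly dispose of the purely tangential (orbit-direction) case, a detail the paper leaves implicit.
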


\begin{proof}
By definition, $\Psi_t(.) = \pi_\flat[(\gamma_{\pi_\sharp(.)}^t)'(0)]$, where the maps $\pi_\flat$ and $\pi_\sharp$ are diffeomorphisms.
Thus, the map~$\Psi_t$ is an immersion if and only if the map 
\[
\Xi_t:S M \to TM
\]
defined as $\Xi_t(v) = (\gamma_v^t)'(0)$ is an immersion.

For $t=0$, this clearly holds true.
Indeed, by construction, $\Xi_0(v) = (\gamma_v^0)'(0)=v$ for every $v \in S M$.
That is, the map~$\Xi_0$ is the inclusion map and so is an immersion.

%Therefore, the differential $d \, \Xi_0$ agrees with the identity map, which implies that $\Xi_0$ is an immersion.

\medskip

Fix $v \in S M$ and $\tau \in (0,\infty)$.
Let $w=w(\lambda)$ be a smooth curve in $S M$ with $w(0)=v$.
Denote $\nu = w'(0)$.
For the sake of simplicity, we will sometimes write $\gamma_\lambda$ for~$\gamma_{w(\lambda)}$.
Note that $\gamma_0 = \gamma_v$.
We want to show that the differential $d \, \Xi_\tau(v)$ of~$\Xi_\tau$ at~$v$ is injective.
That is, if the derivative $d \, \Xi_\tau(v)(\nu)$ of~$\Xi_\tau(w(\lambda))$ vanishes at~$\lambda=0$ then the vector $\nu=w'(0)$ of~$T S M$ is zero.
The idea is to write down in local coordinates the partial differential equation satisfied by~$\Xi_\tau(w)$ and to study the evolution of the normal Jacobi field given by the geodesic variation~$(\gamma_w)$.
%If $\nu$ is a zero of the differential $d \, \Xi_\tau(v)$ of~$\Xi_\tau$ at~$v$ we derive that 

\medskip

By Theorem~\ref{theo:prop}.\eqref{c2}, the curve~$\gamma_v^\tau$ defines an embedding of~$S^1$ into~$M$.
This embedding extends to an embedding $h:S^1 \times (-1,1) \to M$ of a cylinder onto a collar neighborhood of~$\gamma_v^\tau$, which gives rise to a normal coordinate system with $h(.,0) = \gamma_v^\tau$ in the canonical round sphere.

In this normal coordinate system around~$\gamma_v^\tau$, every curve~$\gamma_\lambda^t$ with $(\lambda,t)$ close enough to~$(0,\tau)$ can be represented in a nonparametric way as the graph 
\[
\{ (x,\uu(x,t,\lambda)) \in S^1 \times (-1,1) \mid x \in S^1 \}
\]
of a function $\uu(.,t,\lambda)$ over~$S^1$.
Observe that $\uu(x,\tau,0)=0$ for every $x \in S^1$.
From~\cite[Eq.~(3.2)]{angI} or \cite[Appendix]{gage}, the function 
\[
\uu:S^1 \times (\tau- \delta,\tau+\delta) \times (-\varepsilon,\varepsilon) \to (-1,1)
\]
satisfies the following parabolic partial differential equation of the curvature flow:
\[
\uu_t = \FF(x,\uu,\uu_x,\uu_{xx})
\]
where $\FF$ is a smooth function defined on $S^1 \times (-1,1) \times \R^2$ with \mbox{$\FF_q(x,u,p,q) > 0$}, which can be expressed in terms of the coefficients of the canonical round metric in the normal coordinate system.
Here, the subscript notations refer to partial differentiations.

In a parametric representation, the abscisse of~$\gamma_\lambda^t$ is a function of the parameter~$\theta$, that is, $x=x(\theta,t,\lambda)$ with $x(\theta,\tau,0)=\theta$.
Thus,
\[
\gamma_\lambda^t(\theta) = (x(\theta,t,\lambda),\uu(x(\theta,t,\lambda),t,\lambda)).
\]
Differentiating this expression with respect to~$\theta$ yields the tangent vector~$(\gamma_\lambda^t)'(\theta)$ which can be represented as
\[
(\gamma_\lambda^t)'(\theta) = (x(\theta,t,\lambda),\uu(x(\theta,t,\lambda),t,\lambda),x_\theta(\theta,t,\lambda),\uu_x(x(\theta,t,\lambda),t,\lambda) \, x_\theta(\theta,t,\lambda))
\]
or
\begin{equation} \label{eq:gamma'}
(\gamma_\lambda^t)'(\theta) = (x,\uu,x_\theta,\uu_x \, x_\theta)
\end{equation}
for short.

Note that $\Xi_t(w) = (\gamma_w^t)'(0)$.
Thus, the differential of~$\Xi_t$ at~$v$ in the direction~$\nu=w'(0)$ is obtained by differentiating the relation~\eqref{eq:gamma'} with respect to~$\lambda$ at $\lambda=0$.
That is,
\[
d \, \Xi_t(v)(\nu) = (x_\lambda, \uu_x \, x_\lambda + \uu_\lambda, x_{\theta \lambda}, \uu_{xx} \, x_\theta \, x_\lambda + \uu_{x \lambda} \, x_\theta + \uu_x \, x_{\theta \lambda})
\]
evaluated at~$(0,t,0)$.
Observing that $x_\theta(0,\tau,0)=1$, we simplify this expression as follows
\[
d \, \Xi_\tau(v)(\nu) = (x_\lambda, \uu_x \, x_\lambda + \uu_\lambda, x_{\theta \lambda}, \uu_{xx} \, x_\lambda + \uu_{x \lambda} + \uu_x \, x_{\theta \lambda}).
\]

Now, suppose that $\nu$ lies in the kernel of the differential~$d \, \Xi_\tau(v)$ of~$\Xi_\tau$ at~$v$, that is, $d \, \Xi_\tau(v)(\nu) = 0$.
In this case, the functions $x_\lambda$, $\uu_\lambda$, $x_{\theta \lambda}$ and $\uu_{x\lambda}$ vanish at~$(0,\tau,0)$.
Hence, both $\vv$ and $\vv_x$ vanish at $(0,\tau,0)$, where $\vv=\uu_\lambda$.
That is, the function~$\vv$ has a multiple zero at~$(0,\tau,0)$.

\medskip

Now, in a more intrinsic way, the zeros of~$\vv$ can be related to the zeros of the normal vector field induced by the curve variation~$(\gamma_\lambda^t)$ as follows.
The vector field along~$\gamma_v^t$ induced by the curve variation $(\gamma_\lambda^t)$, \cf~Definition~\ref{def:Y}, is given by
\begin{equation} \label{eq:Yt}
Y^t(\theta) = \frac{\partial}{\partial \lambda} \gamma_\lambda^t(\theta)_{|\lambda=0} = (x,\uu,x_\lambda,\uu_x \, x_\lambda + \uu_\lambda)
\end{equation}
evaluated at~$(\theta,t,0)$.
As $x_\theta(\theta,t,0) \neq 0$ for $t$ close enough to~$\tau$, it follows from the expression of~$(\gamma_v^t)'$ and~$Y^t$, \cf~\eqref{eq:gamma'} and~\eqref{eq:Yt}, that the normal vector field~$Y_\perp^t$ defined in Definition~\ref{def:Y} vanishes if and only if $\vv=\uu_\lambda$ vanishes.
More precisely,
\begin{equation} \label{eq:equiv}
Y_\perp^t(\theta) =0 \Leftrightarrow \vv(x,t) = 0
\end{equation}
where $\vv(x,t) = \uu_\lambda(x,t,0)$ and $x = x(\theta,t,0)$.

\medskip

The number of zeros of~$Y_\perp^t$ is given by the following result.

\begin{lemma} \label{lem:2}
The normal vector field~$Y_\perp^t$ has exactly two zeros along~$\gamma_v^t$ for every~$t \geq 0$.
\end{lemma}

\begin{proof}
At $t=0$, the curves~$\gamma_\lambda^0$ are geodesic for the Zoll Finsler metric~$F$.
It follows from Theorem~\ref{theo:LM} that $Y_\perp^0$ has exactly two zeros along~$\gamma_v$.
Moreover, these zeros are simple.
By the implicit function theorem, we deduce that $Y_\perp^t$ has exactly two zeros along~$\gamma_v^t$ for every $t>0$ small enough.

Let us examine how the number of zeros of~$Y_\perp^t$ evolves with $t$ for every \mbox{$t > 0$}.
Differentiating the partial differential equation
\[
\uu_t = \FF(x,\uu,\uu_x,\uu_{xx})
\]
with respect to~$\lambda$ yields the following expression
\[
\uu_{\lambda t} = \FF_u(x,\uu,\uu_x,\uu_{xx}) \, \uu_\lambda + \FF_p(x,\uu,\uu_x,\uu_{xx}) \, \uu_{\lambda x} + \FF_q(x,\uu,\uu_x,\uu_{xx}) \, \uu_{\lambda xx}.
\]
Thus, the function $\vv=\uu_\lambda$ satisfies the parabolic partial differential equation
\begin{equation} \label{eq:v}
\vv_t = a(x,t,\lambda) \, \vv_{xx} + b(x,t,\lambda) \, \vv_x + c(x,t,\lambda) \, \vv
\end{equation}
where $a = \FF_q(x,\uu,\uu_x,\uu_{xx})$, $b = \FF_p(x,\uu,\uu_x,\uu_{xx})$ and $c = \FF_u(x,\uu,\uu_x,\uu_{xx})$.

By Theorem~\ref{theo:ang-crelle}, the number of zeros of~$\vv(.,t)$ is nonincreasing with~$t$.
Therefore, the number of zeros of the normal vector field~$Y_\perp^t$ along~$\gamma_v^t$ is nonincreasing too from the relation~\eqref{eq:equiv}.
Since $Y_\perp^t$ has exactly two zeros for $t$ small enough, it follows that $Y_\perp^t$ has at most two zeros for every $t>0$.

Now, if $Y_\perp^t$ had less than two zeros, then all the curves~$(\gamma_\lambda^t)$ would be on one side of the simple loop~$\gamma_v^t$ for every $\lambda > 0$ small enough (at least to the first order).
This is impossible since $\gamma_v^t$ and $\gamma_w^t$ divide the round sphere into two domains of the same area.
Therefore, the vector field~$Y_\perp^t$ has exactly two zeros along~$\gamma_v^t$ for every~$t$.
\end{proof}

Let us continue the proof of Proposition~\ref{prop:immersion}.
Combined with~\eqref{eq:equiv}, Lemma~\ref{lem:2} shows that the function~$\vv(.,t)$ has a constant number of zeros, namely two, for every~$t \geq 0$.
Since $\vv$ satisfies the parabolic partial differential equation~\eqref{eq:v}, we deduce from Theorem~\ref{theo:ang-crelle} that the functions $\vv$ and~$\vv_x$ do not simultaneously vanish.
Thus, the differential of~$\Xi_\tau$ at~$v$ is injective.
Hence the result.
\end{proof}

The previous propositions yield the following result.

\begin{theorem} \label{theo:diffeo}
Let $(M,F)$ be a balanced Zoll Finsler two-sphere.
For every $t \in [0,\infty)$, the map $\Psi_t: S_0 M \to S_0 M$ induced by the curvature flow of the canonical round sphere is a diffeomorphism.
\end{theorem}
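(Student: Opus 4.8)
The goal is to prove Theorem~\ref{theo:diffeo}: that for every $t \in [0,\infty)$, the map $\Psi_t: S_0 M \to S_0 M$ induced by the curvature flow is a diffeomorphism. The plan is to assemble the conclusion from the two preceding propositions (injectivity and immersion) together with a topological/compactness argument, since $\Psi_t$ is built from the diffeomorphisms $\pi_\flat$, $\pi_\sharp$ and the map $\Xi_t$. Having already established in Proposition~\ref{prop:immersion} that $\Psi_t$ is an immersion, and (by the reasoning recorded in the \texttt{\char`\\forget} block for Proposition~\ref{prop:inj}) that $\Psi_t$ is injective, the remaining task is essentially to upgrade ``injective immersion'' to ``diffeomorphism.''

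First I would record that $S_0 M$ is a compact manifold (it is diffeomorphic to $\RP^3$, the unit tangent bundle of the round sphere) and that $\Psi_t$ maps $S_0 M$ to itself. An injective immersion between manifolds of the \emph{same} dimension is a local diffeomorphism onto its image; since $S_0 M$ is compact and $S_0 M$ (the target) is connected, the image $\Psi_t(S_0 M)$ is both open (by the local diffeomorphism / invariance of domain) and closed (by compactness), hence all of $S_0 M$. Therefore $\Psi_t$ is a surjective injective local diffeomorphism, i.e. a bijective local diffeomorphism, and its inverse is automatically smooth. This yields that $\Psi_t$ is a diffeomorphism. I would also note that $\Psi_0 = \id$, so the family $(\Psi_t)$ is an isotopy starting at the identity.

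The two propositions to be combined are Proposition~\ref{prop:immersion} (immersion) and the injectivity statement. For injectivity, the argument splits according to whether two distinct unit vectors $v,w \in S M$ induce the same unparametrized $F$-geodesic or distinct ones: in the first case one uses that embeddedness is preserved under the curvature flow (Theorem~\ref{theo:prop}.\eqref{c2}) so that the two initial points of the evolved curve stay distinct; in the second case one uses that the two geodesics meet in exactly two transverse points (Theorem~\ref{theo:2}), that the number of intersections cannot increase under the flow, yet the balanced (equal-area) condition (Theorem~\ref{theo:prop}.\eqref{c5}) forces at least two intersections, so exactly two persist, ruling out any oriented tangency and hence any coincidence of the derivatives $(\gamma_v^t)'(0)$ and $(\gamma_w^t)'(0)$.

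The main obstacle, and the only genuinely nontrivial input, is already absorbed into Proposition~\ref{prop:immersion}: the parabolic zero-counting argument (Theorem~\ref{theo:ang-crelle}) showing that the normal Jacobi field $Y_\perp^t$ keeps exactly two simple zeros for all $t$, so that $\vv$ and $\vv_x$ never vanish simultaneously. Once the immersion and injectivity statements are in hand, the promotion to a diffeomorphism is a soft point-set/invariance-of-domain argument and presents no real difficulty; the delicate analytic work lies entirely in controlling the zeros of the linearized flow, which the earlier propositions handle. I would therefore present the proof of Theorem~\ref{theo:diffeo} as a short deduction invoking the equal-dimension invariance-of-domain principle and the compactness and connectedness of $S_0 M$.
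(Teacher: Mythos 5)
Your proof is correct, but it takes a genuinely different route from the paper's. The paper's printed proof never uses injectivity of~$\Psi_t$ at all: from Proposition~\ref{prop:immersion} and the compactness of~$S_0M$, the map~$\Psi_t$ is a \emph{proper} local diffeomorphism, hence a covering map; since $\Psi_t$ is homotopic to $\Psi_0=\id$ through the family $(\Psi_s)_{0\le s\le t}$, it induces an isomorphism on $\pi_1(S_0M)$, so the covering has a single sheet and $\Psi_t$ is a diffeomorphism. This is exactly why the injectivity proposition you invoke survives only in a deleted (\texttt{forget}) block of the source and is not an established statement of the compiled paper: you cannot cite it, you must prove it, as you indeed sketch. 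Your route --- injectivity plus immersion, then the soft upgrade via invariance of domain, compactness and connectedness of~$S_0M$ --- is valid, and your injectivity sketch (case split on whether $v,w$ generate the same unparametrized geodesic; preservation of embeddedness in the first case; in the second case Theorem~\ref{theo:2}, Angenent's monotonicity of intersection numbers with strict decrease at tangencies, and the balanced condition forcing at least two intersections) reproduces the author's discarded first draft. The trade-off: the covering-space argument buys economy, requiring only the immersion property and the homotopy to the identity, whereas your argument needs the extra nontrivial geometric input of injectivity but keeps the final step elementary (inverse function theorem and point-set topology, no covering theory). One small case to patch in your injectivity sketch: for $w=-v$ the evolved initial points \emph{coincide}, $\gamma_{-v}^t(0)=\gamma_v^t(0)$, so ``the two initial points stay distinct'' fails there; instead observe that the evolved tangent vectors are opposite nonzero vectors, hence distinct.
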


\begin{proof}
\forget
The map $\Psi_t$ is an injective immersion from Proposition~\ref{prop:inj} and Proposition~\ref{prop:immersion}.
Now, since $\Psi_0$ agrees with the identity map on~$S_0 M$ and the degree is preserved under homotopies, the degree of~$\Psi_t$ is nontrivial.
Thus, the map $\Psi_t$ is surjective for every $t \geq 0$ for topological reasons.
It follows from the global inverse function theorem that $\Psi_t$ is a diffeomorphism for every~$t \geq 0 $.

We can also argue as follows without relying on Proposition~\ref{prop:inj}.
\forgotten
From Proposition~\ref{prop:immersion} and since $S_0 M$ is compact, the map~$\Psi_t$ is a proper local diffeomorphism.
Therefore, it is a covering map.
Now, the map~$\Psi_t$ is $\pi_1$-injective (this is clearly the case for~$\Psi_0$ and this property is preserved under homotopy).
Hence, the covering~$\Psi_t$ is a diffeomorphism for every~$t \geq 0$.
\end{proof}

\forget
\begin{remark}
The one-parameter family of diffeomorphisms $\Psi_t : S_0 M \to S_0 M$ may not converge to a diffeomorphism of~$S_0 M$.
For instance, the $F$-geodesics converge to equators of the round sphere only as unparametrized curves, \cf~Theorem~\ref{theo:prop}.\eqref{c5}.
\end{remark}
\forgotten

\forget
Even though the diffeomorphisms $\Psi_t : S_{\nabla_0} M \to S_{\nabla_0} M$ do not necessarily converge to a diffeomorphism of~$S_{\nabla_0} M$, the $\rho_t$-actions of~$S^1$ on~$S_{\nabla_0} M$ converge to the standard action of~$S^1$ on~$S_{\nabla_0} M$ up to reparametrization.
\forgotten

This isotopy of diffeomorphisms allows us to define a deformation~$\rho_t$ of the geodesic flow $\rho_0=\rho$ of balanced Zoll Finsler spheres, \cf~\eqref{eq:rho}, to the geodesic flow of the canonical round sphere as follows.

\medskip

Let $(M,F)$ be a balanced Zoll Finsler two-sphere.
For every $v \in S_0 M$, consider the unique curve~$\gamma_u^t$ tangent to~$v$ at~$\theta=0$ and pointing in the same direction as~$v$.
That is, $u=\Psi_t^{-1}(v)$ under the identification $S_0M=SM$.
Reparametrize this curve proportionally to its $g_0$-arclength preserving both its initial point and its orientation.
Define the $S_1$-action
\[
\rho_t:S^1 \to \Diff(S_0 M)
\]
such that $\rho_t(\theta)$ takes~$v$ to the tangent vector of this new curve at the point of parameter~$\theta$.
Since $\Psi_t$ is a diffeomorphism, the map $\rho_t(\theta)$ is also a diffeomorphism of~$S_0 M$.
Clearly, the $S^1$-action~$\rho_t$ on~$S_0 M$ is free and satisfies the symmetry property 
\begin{equation} \label{eq:sym}
\rho_t(\theta)(-v) = - \rho_t(-\theta)(v)
\end{equation}
for every $t \in [0,\infty)$, $\theta \in S^1=\R/2\pi \Z$ and $v \in S_0 M$.
Moreover, every $\rho_t$-orbit projects to an embedding of~$S^1$ into~$M$ by the canonical projection~$S_0 M \to M$.
It is also worth pointing out that the expression of~$\rho_t(\theta)(v)$ vary smoothly with respect to $t \in [0,\infty)$, $\theta \in S^1$ and $v \in S_0 M$.

\medskip

Thus defined, the actions~$\rho_t$ satisfy the following convergence result which implies Theorem~\ref{theo:flowrp2} by passing to the quotient.

\forget
\section{Convergence of the geodesic flow under the curvature flow} \label{sec:action}

In this section, we define a deformation of the geodesic flow of balanced Zoll Finsler spheres to the geodesic flow of the canonical round sphere, which relies on the isotopy of diffeomorphisms previously introduced.

\medskip

Let $(M,F)$ be a balanced Zoll Finsler two-sphere.
Recall that the geodesic flow of~$F$ on~$S M$ induces a free $S^1$-action on~$S_0 M$ by conjugacy by the musical diffeomorphisms $\pi_\flat$ and~$\pi_\sharp$.
This $S^1$-action denoted by
\[
\rho:S^1 \to \Diff(S_0 M)
\]
is defined as
\[
\rho(\theta)(v) = \pi_\flat[\gamma_{\pi_\sharp(v)}'(\theta)]
\]
for every $\theta \in S^1$ and $v \in S_0 M$.
%Given the identification between $S M$ and~$S_0 M$, this expression can be written $\rho(\theta)(v) = \gamma_v'(\theta)$.
One can deform this action through the one-parameter family of $S^1$-actions~$(\Psi_t)_* \,  \rho$, \cf~Section~\ref{sec:flow}, which can be expressed as
\[
\begin{array}{ccc}
S^1 \times S_0 M & \to & S_0 M \\
(\theta,v) & \mapsto & \pi_\flat[(\gamma_{\pi_\sharp(v)}^t)'(\theta)]
\end{array}
\]
They are free since embedded loops remain embedded through the curvature flow, \cf~Theorem~\ref{theo:prop}.\eqref{c2}.
However, it is not clear whether they converge to a free $S^1$-action as $t$ goes to infinity.
Instead, we consider a different family of $S^1$-actions defined as follows. 

\medskip

Consider  an embedded loop $\gamma:S^1 \to M$.
Let~$\hat{\gamma}$ be the orientation-preserving reparametrization of~$\gamma$ proportional to its $g_0$-arclength with the same initial point, \ie, $\hat{\gamma}(0) = \gamma(0)$.
Denote by $s:S^1 \to S^1$ the corresponding change of parameter with 
\[
\hat{\gamma}(\theta) = \gamma(s(\theta)).
\]
Given $t \in [0,1]$, define $\gamma_t:S^1 \to M$ as
\[
\gamma_t(\theta) = \gamma(t \, s(\theta) + (1-t) \, \theta)
\]
for every $\theta \in S^1$.
Clearly, the loop~$\gamma_t$ is a regular orientation-preserving reparametrization of~$\gamma$ with the same initial point.
By construction, the isotopy~$(\gamma_t)$ obtained by reparametrization connects~$\gamma_0=\gamma$ to~$\gamma_1=\hat{\gamma}$.
Note that the isotopy~$(\gamma_t)$ smoothly depends on~$\gamma$ (where the loop space on~$M$ is endowed with the smooth Fr\'echet structure) as do the initial curve~$\hat{\gamma}$ and the change of parameter~$s$.

\medskip

Applying these reparametrizations to the $F$-geodesics~$\gamma_v$, we obtain an $S^1$-action deformation~$\rho_t$ of~$\rho$ for $t \in [0,1]$
\[
\rho_t:S^1 \to \Diff(S_0 M)
\]
with $\rho_0=\rho$ such that 
\begin{equation} \label{eq:rhot}
\rho_t(\theta)(v) = \pi_\flat[(\gamma_{\pi_\sharp(v)})_t'(\theta)]
\end{equation}
for every $\theta \in S^1$ and $v \in S_0 M$.
Here, the map $\pi_\flat: TM \setminus \{ 0\} \to S_0 M$ is the radial projection onto~$S_0 M$.
Although the $S^1$-action~$\rho_1$ differs from the original action~$\rho$ induced by the geodesic flow of~$F$, the $\rho_t$-orbits remain the same for every $t \in [0,1]$.
Actually, the $S^1$-action~$\rho_1$ is given by the following expression:
\[
\rho_1(\theta)(v) = \pi_\flat[(\widehat{\gamma_{\pi_\sharp(v)}})'(\theta)].
\]

Now, we extend the $S^1$-action deformation~$(\rho_t)$ from $t \in [0,1]$ to $t \geq 1$ by letting
\begin{equation} \label{eq:rhott}
\rho_{t+1}(\theta)(v) = \pi_\flat[(\widehat{\gamma_{\pi_\sharp(\Psi_t^{-1}(v))}^t})'(\theta)].
\end{equation}
for every $t \geq 0$, $\theta \in S^1$ and $v \in S_0 M$.
In other words, for every $v \in S_0 M$, consider the unique curve~$\gamma_u^t$ tangent to~$v$ at~$\theta=0$ and pointing in the same direction as~$v$.
Reparametrize this curve proportionally to its $g_0$-arclength preserving both its initial point and its orientation.
Then, for every $\theta \in S^1$, the diffeomorphism~$\rho_t(\theta)$ takes~$v$ to the tangent vector of this new curve at the point of parameter~$\theta$.
Clearly, the $S^1$-action~$\rho_t$ on~$S_0 M$ is free and satisfies the symmetry property 
\begin{equation} \label{eq:sym}
\rho_t(\theta)(-v) = - \rho_t(-\theta)(v)
\end{equation}
for every $t \in [0,\infty)$, $\theta \in S^1=\R/2\pi \Z$ and $v \in S_0 M$.
Moreover, every $\rho_t$-orbit projects to an embedding of~$S^1$ into~$M$ by the canonical projection~$S_0 M \to M$.
It is also worth pointing out that the expressions~\eqref{eq:rhot} and~\eqref{eq:rhott} vary (piecewise) smoothly with respect to $t \in [0,\infty)$, $\theta \in S^1$ and $v \in S_0 M$.

\medskip

Thus defined, the actions~$\rho_t$, whose construction relies on Theorem~\ref{theo:diffeo}, satisfy the following convergence result which implies Theorem~\ref{theo:flowrp2} by passing to the quotient.
\forgotten

\begin{theorem} \label{theo:flowS2}
Let $(M,F)$ be a balanced Zoll Finsler two-sphere.
Then the smooth free $S^1$-actions 
\[
\rho_t:S^1 \times S_0 M \to S_0 M
\]
converge to the action~$\rho_\infty$ induced by the geodesic flow of the canonical round sphere.

Furthermore, for $t \in [0,\infty]$, every $\rho_t$-orbit projects to an embedding of~$S^1$ into~$M$ under the canonical projection $S_0 M \to M$.
\end{theorem}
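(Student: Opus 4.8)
The plan is to read each $\rho_t$-orbit as the parametrized tangent lift of a single curve in $M$ and to follow that curve under the curvature flow as $t\to\infty$. First I would dispose of the embedding claim for finite $t$. By construction, the $\rho_t$-orbit through a vector $v\in S_0M$ projects, under the canonical projection $S_0M\to M$, to the curve $\gamma_u^t$ with $u=\Psi_t^{-1}(v)$, reparametrized proportionally to $g_0$-arclength; since reparametrization does not alter the image, it suffices to know that $\gamma_u^t$ is embedded. This is exactly Theorem~\ref{theo:prop}.\eqref{c2}: the initial curve $\gamma_u$ is a simple closed $F$-geodesic, and embeddedness is preserved along the curvature flow. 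For $t=\infty$ the orbit will project to an equator, which is trivially embedded, so the ``furthermore'' reduces to the convergence statement.

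For the convergence I would fix $v\in S_0M$ and exploit the normalization built into the definition of $\rho_t$. By the defining relation $\pi_\flat[(\gamma_u^t)'(0)]=v$ with $u=\Psi_t^{-1}(v)$, and since the radial projection $\pi_\flat$ preserves footpoints and directions, the curve $\gamma_u^t$ starts at the fixed footpoint $x_v\in M$ of~$v$ with fixed $g_0$-unit tangent~$v$ there, for every $t\geq 0$ --- even though $u$ itself varies with $t$. Hence the only object I need to control is the single trajectory $t\mapsto\gamma_u^t$ of curves sharing the initial datum $(x_v,v)$; the $t$-dependence of $u$ is harmless precisely because this datum is pinned down.

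Next I would upgrade the convergence recorded in Theorem~\ref{theo:prop} to a smooth statement. Since $(M,F)$ is balanced, every $F$-geodesic divides the round sphere into two domains of equal $g_0$-area, so by Theorem~\ref{theo:prop}.\eqref{c5} no curve shrinks to a point and each $\gamma_w^t$ converges to an equator. Combining Theorem~\ref{theo:prop}.\eqref{c4} (the curvature tends to $0$ in the $C^\infty$-norm) with the fact that the length tends to~$2\pi$, the curves $\gamma_w^t$ converge in~$C^\infty$ to equators, and over the compact family $w\in SM$ this convergence is locally uniform. Reparametrizing by $g_0$-arclength with fixed initial point is then a smooth operation that likewise converges. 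Because the trajectory $\gamma_u^t$ retains the initial condition $(x_v,v)$, every $C^\infty$-subsequential limit is an equator through $x_v$ in direction~$v$; as such an equator is unique, $\gamma_u^t$ converges smoothly to it. Reading off the arclength-parametrized tangent lift yields $\rho_t(\theta)(v)\to\rho_\infty(\theta)(v)$, where $\rho_\infty$ is the geodesic flow of~$g_0$, whose orbits are exactly the tangent lifts of arclength-parametrized equators. This identifies the limit and shows the $\rho_\infty$-orbits project to equators, completing the embedding claim at $t=\infty$.

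The hard part will be the locally uniform $C^\infty$-convergence of the curvature flow to equators over the whole compact family of initial $F$-geodesics, together with the passage to the ``diagonal'' limit $\gamma_u^t$ in which the initial geodesic drifts with~$t$. This is precisely where the quantitative estimates of Gage~\cite{gage} and Grayson~\cite{grayson} (in the weaker form sufficient here, \cf~\cite{hsu}) enter: one must ensure not merely that each individual trajectory converges but that the convergence is uniform in the initial datum and strong enough in~$C^1$ on tangent vectors to yield convergence of the $S^1$-actions as maps $S^1\times S_0M\to S_0M$. Once this uniformity is secured, freeness and the symmetry property \eqref{eq:sym} pass to the limit automatically, since $\rho_\infty$ is literally the geodesic flow of the canonical round metric.
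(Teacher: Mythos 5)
Your proposal is correct and follows essentially the same route as the paper: both rely on Theorem~\ref{theo:prop} (curvature going to zero, hence closeness to equators, uniformly over the compact family of $F$-geodesics) together with the fact that the curve $\gamma_{\Psi_t^{-1}(v)}^t$ is pinned to the footpoint and direction of~$v$ at $\theta=0$, which identifies the limit as the equator tangent to~$v$, and both obtain the embedding claim from Theorem~\ref{theo:prop}.\eqref{c2}. The only cosmetic difference is that the paper reads off closeness to the tangent equator directly from the small-curvature estimate at each large time, whereas you pass through subsequential limits and uniqueness of the limiting equator; the ingredients and structure are otherwise identical.
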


\begin{proof}
It follows from Theorem~\ref{theo:prop} that for every $\varepsilon > 0$ and every $t\geq 0$ large enough, the unparametrized loops~$\gamma_u^t$ have curvature at most~$\varepsilon$ on the canonical round sphere.
In particular, these loops are uniformly close to the equators to which they are tangent (in the smooth Fr\'echet topology).
By construction, this implies that the action~$\rho_t$ is close to the action~$\rho_\infty$ induced by the $g_0$-geodesic flow for $t$ large enough.

The last statement about the orbits of~$\rho_t$ is also satisfied since these orbits are transverse to the fibers of $S_0M \to M$ and project to the images of the $F$-geodesics under the curvature flow (which do not self-intersect).
\end{proof}

\begin{remark}
This convergence result shows that the one-parameter family of $S^1$-actions~$(\rho_t)$ is defined for $t \in [0,\infty]$.
In the rest of this article, we will consider a reparametrization~$(\rho_\tau)$ of this family for $\tau \in [0,1]$ with $\tau = \frac{t}{t+1}$.
\end{remark}

\section{Crofton formula on Zoll Finsler two-spheres}

We review some constructions on Zoll Finsler two-spheres~$(M,F)$ all of whose geodesics are of length~$2\pi$, including the general Crofton formula on Finsler surfaces. 

\medskip

%Consider a Zoll Finsler two-sphere~$(M,F)$ all of whose geodesics are of length~$2\pi$.

%\medskip

Let $\mathcal{L}: TM \to T^*M$ be the Legendre transform of the Lagrangian~$\frac{1}{2} F^2$.
Since $F$ is quadratically convex, the Legendre transform is a diffeomorphism between $TM$ and~$T^*M$.
By homogeneity of~$F$, it preserves the norm on each fiber of the bundle vectors $TM$ and~$T^*M$.
In particular, it induces a diffeomorphism between the unit sphere bundle and the unit co-sphere bundle $SM$ and~$S^*M$.
Geometrically, this diffeomorphism is defined as follows: for every vector $v \in S_xM$, the image~$\mathcal{L}(v)$ of~$v$ is the unique covector of~$S_x^*M$ such that $\mathcal{L}(v)(v)=1$.

\medskip

From now on, we will identify $TM$ with~$T^*M$ and $SM$ with~$S^*M$ via the Legendre transform.
Recall that we also identify~$SM$ with the unit tangent bundle~$S_0 M$ of the canonical round sphere under the musical diffeomorphisms~$\pi_\flat$ and~$\pi_\sharp$, \cf~\eqref{eq:sharp} and~\eqref{eq:flat}.
With these identifications, the action~$\rho_F$ of~$S^1$ on~$S M$ given by the geodesic flow of~$F$ induces an action on~$S^* M$ by conjugacy by the Legendre transform, namely the co-geodesic flow of~$F$, and an action on~$S_0^* M$ by conjugacy both by the Legendre transform and the musical diffeomorphisms.
Despite the risk of confusion, all these $S^1$-actions will be denoted by~$\rho_F$.

%With these identifications, the action~$\rho_F$ of~$S^1$ on~$S M$ given by the geodesic flow induces an action, still denoted by~$\rho_F$, on~$S^*_0 M$ by conjugacy by the Legendre transform and the musical diffeomorphisms.

\forget

\medskip

There is another diffeomorphism $\vartheta:S_0^* M \to S_0 M$ depending on the Legendre transform and defined as follows: for every $\xi \in S_0^* M_{|x}$. the image~$\vartheta(\xi)$ of~$\xi$ is the unique vector~$v \in S_0 M_{|x}$ lying in the kernel of~$\xi$, \ie $\xi(v)=0$, such that the basis $(v,\mathcal{L}^{-1}(\xi))$ of~$T_x M$ is positively oriented.
The conjugacy of the geodesic flow~$\rho_F$ on~$S_0 M$ by the diffeomorphism~$\vartheta$ defines a free $S^1$-action on~$S_0^* M$
\[
\rho^\perp_F : S^1 \to \Diff(S_0^* M)
\]
given by 
\begin{equation} \label{eq:perp}
\rho^\perp_F(\theta)(\xi) = \vartheta^{-1}[\rho_F(\theta)(\vartheta(\xi))]
\end{equation}
for every $\theta \in S^1$ and $\xi \in S_0^* M$.

\forgotten

\medskip

Let $\alpha$ be the tautological one-form on~$T^* M$.
By definition,
\[
\alpha_\xi(V) = \xi(d \pi_\xi(V))
\]
for every $\xi \in T^* M$ and $V \in T_\xi T^* M$, where $\pi:T^* M \to M$ is the canonical surjection.
From the Liouville theorem, the tautological one-form~$\alpha$ (and so the symplectic form~$\omega = d\alpha$) is invariant under the co-geodesic flow of any Finsler metric.
Observe also that the $S^1$-orbits of~$\rho_F$ on~$S^* M$ and~$S_0^* M$ are transverse to the contact structures given by the kernels of~$\alpha$ and~$\pi_\sharp^*(\alpha)$.
Now, the pull-back of the $3$-form $\alpha \wedge d\alpha$ under the inclusion map \mbox{$S^* M \hookrightarrow T^* M$} defines a volume form on~$S^* M$.
(That is, the pull-back of~$\alpha$ is a contact one-form on~$S^* M$.)
Since $S^* M$ is the unit cotangent bundle of a Finsler sphere all of whose geodesics are closed of lengh~$2\pi$, the integral of this volume form on~$S^* M$ does not depend on the Finsler metric and is equal to~$\pm \, 8 \pi^2$ by a result of A.~Weinstein, \cf~\cite[\S2.C]{besse}.
That is,
\begin{equation} \label{eq:wein}
\int_{S^* M} \alpha \wedge d\alpha = \int_{S_0^* M} \alpha \wedge d\alpha = \pm \, 8 \pi^2.
\end{equation}

\forget
Recall that the quotient manifold theorem asserts that if $G$ is a Lie group acting smoothly, freely and properly on a smooth manifold~$N$, then the quotient space $N/G$ is a topological manifold with a unique smooth structure such that the quotient map $N \to N/G$ is a smooth submersion.
This result applies to the $S^1$-action on~$S^*_0 M$ given by the co-geodesic flow~$\rho_F$.
%~$\rho^\perp_F$ on~$S^*_0 M$ defined in~\eqref{eq:perp}.
% \simeq S^* M$ given by the co-geodesic flow of~$F$.

Denote by 
\[
\Gamma_F=S^*_0 M/\rho_F
\]
the quotient manifold and by 
\[
q_F:S^*_0 M \to \Gamma_F
\]
the quotient submersion.
The manifold~$\Gamma_F$ represents the space of unparametrized oriented geodesics of the Zoll two-sphere~$(M,F)$.
It is diffeomorphic to~$S^2$.
\forgotten

By the quotient manifold theorem, the $S^1$-action on~$S^*_0 M$ given by the co-geodesic flow~$\rho_F$ gives rise to a quotient manifold
\[
\Gamma_F=S^*_0 M/\rho_F
\]
diffeomorphic to~$S^2$, representing the space of unparametrized oriented geodesics of the Zoll Finsler two-sphere~$(M,F)$, and a quotient submersion
\[
q_F:S^*_0 M \to \Gamma_F.
\]
By construction, the map~$q_F$ takes a unit cotangent vector of~$M$ to the unparametrized oriented $F$-geodesic of~$M$ with the Legendre transform of this unit cotangent vector as initial condition.
Thus, the projection~$\pi(q_F^{-1}(\gamma))$ of a fiber over~$\gamma$ represents the unparametrized closed geodesic of~$(M,F)$ given by~$\gamma \in \Gamma_F$.
We will sometimes identify~$\gamma$ with~$\pi(q_F^{-1}(\gamma))$.

\medskip

Consider the double fibration
\[
\xymatrix{
  & S^*_0 M \ar[dl]_\pi \ar[dr]^{q_F} \ar[r]^i & T^*M \\
 M & & \Gamma_F}
\]
where $i: S^*_0 M \xrightarrow{\pi_\flat^*} S^* M \hookrightarrow T^*M$ is given by the canonical injection once $S^*_0 M$ is identified with~$S^* M$, and $\pi:S_0^* M \to M$ is the canonical surjection.
Note that 
%both fibrations~$\pi$ and~$q_F$ are Legendrian and that 
the product map $\pi \times q_F:S_0^* M \to M \times \Gamma_F$ is an embedding.
From~\cite{besse}, there exists a unique symplectic form~$\lambda_F$ on~$\Gamma_F$ such that 
\begin{equation} \label{eq:lambda}
q_F^* \, \lambda_F = i^* \omega.
\end{equation}
%where $\omega$ is the Liouville symplectic form on~$T^*M$.

\medskip

The general Crofton formula on Finsler surfaces can be stated as follows.

\begin{theorem}[{\cite[Theorem~5.2]{AB06}}] \label{theo:crofton}
With the previous notations, the length of every smooth curve~$c$ on~$(M,F)$ satisfies
\begin{equation} \label{eq:crofton}
\length_F(c) = \frac{1}{4} \int_{\gamma \in \Gamma_F} \#(\gamma \cap c) \, |\lambda_F|
\end{equation}
where $|\lambda_F|$ is the smooth positive area density on~$\Gamma_F$ induced by the symplectic form~$\lambda_F$. 
\end{theorem}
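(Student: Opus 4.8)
The plan is to reduce the Crofton formula to an integral-geometric computation on the incidence cylinder $\pi^{-1}(c)\subset S_0^*M$, using the identity $q_F^*\lambda_F = i^*\omega$ from~\eqref{eq:lambda}. First I would interpret the intersection number geometrically: for an oriented geodesic $\gamma\in\Gamma_F$, the fiber $q_F^{-1}(\gamma)$ meets $\pi^{-1}(c)$ in exactly one covector over each point of $\gamma\cap c$, namely the Legendre transform of the velocity of $\gamma$ at that point. Hence the restriction $q_F|_{\pi^{-1}(c)}\colon\pi^{-1}(c)\to\Gamma_F$ has precisely $\#(\gamma\cap c)$ preimages over $\gamma$. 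For a generic curve $c$ the intersections are transverse, and the tangencies — where $q_F|_{\pi^{-1}(c)}$ fails to be an immersion, i.e.\ where $c'$ is parallel to the geodesic direction — are isolated; so the area formula for densities yields $\int_{\Gamma_F}\#(\gamma\cap c)\,|\lambda_F| = \int_{\pi^{-1}(c)} q_F^*|\lambda_F| = \int_{\pi^{-1}(c)}|i^*\omega|$, the last equality being $q_F^*|\lambda_F|=|q_F^*\lambda_F|=|i^*\omega|$.

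It then remains to evaluate $\int_{\pi^{-1}(c)}|i^*\omega|$ directly. I would parametrize the cylinder by $(s,\phi)$, where $s$ runs along $c$ and $\phi$ parametrizes the unit cosphere $S_{c(s)}^*M$, and first check that the pulled-back tautological form is $i^*\alpha=\xi(c'(s))\,ds$, since the vertical directions lie in the kernel of $d\pi$ while $\partial_s$ projects to $c'(s)$. Writing $h(s,\phi)=\xi(s,\phi)(c'(s))$ for the support function, this gives $i^*\omega=d(i^*\alpha)=h_\phi\,d\phi\wedge ds$, so that the associated density is $|i^*\omega|=|h_\phi|\,ds\,d\phi$.

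The heart of the computation is the fiber integral $\int_0^{2\pi}|h_\phi|\,d\phi$, which is the total variation of $\phi\mapsto h(s,\phi)$. By convex duality the maximum of $\xi(c'(s))$ over the unit cosphere equals $F(c'(s))$, and reversibility forces the minimum to be $-F(c'(s))$; quadratic convexity guarantees that these extrema are attained exactly once and that $h$ is unimodal on the circle, so the total variation is exactly $2\bigl(F(c'(s))-(-F(c'(s)))\bigr)=4\,F(c'(s))$. Integrating in $s$ then yields $\int_{\pi^{-1}(c)}|i^*\omega|=4\length_F(c)$, which combined with the previous paragraph establishes~\eqref{eq:crofton}. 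As a consistency check, taking $c$ to be a geodesic — which meets every other geodesic in two points by Theorem~\ref{theo:2} — recovers the total mass of $\Gamma_F$ predicted by Weinstein's normalization~\eqref{eq:wein}.

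The main obstacle I anticipate is the rigorous handling of the area formula at the level of densities: one must argue that the tangencies of $c$ with the geodesic foliation form a negligible set that does not contribute, and that $\gamma\mapsto\#(\gamma\cap c)$ is finite and integrable (controlled, for instance, by the facts that distinct geodesics meet in finitely many points and that $c$ is compact). By contrast, the differential-geometric computation of $i^*\alpha$ and the fiber integral are routine once the support-function picture is in place; the conceptual weight lies entirely in matching the intersection count with the pushforward of the density and in pinning down the constant $4$ via reversibility and quadratic convexity.
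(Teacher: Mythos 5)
The paper does not prove this statement at all: it is quoted directly from \'Alvarez Paiva--Berck (\cite[Theorem~5.2]{AB06}), so there is no internal proof to compare against. Your argument is correct, and it is essentially the standard proof underlying the cited result: identifying $\#(\gamma\cap c)$ with the fiber cardinality of $q_F$ restricted to the incidence surface $\pi^{-1}(c)$ (one covector per intersection point, because $\Gamma_F$ consists of \emph{oriented} geodesics), transporting the density via $q_F^*\lambda_F=i^*\omega$ and the area formula, and then computing the fiber integral of the support function $h(s,\phi)=\xi(s,\phi)(c'(s))$, whose total variation over each cosphere is $2\bigl(F(c'(s))-(-F(c'(s)))\bigr)=4F(c'(s))$ by duality, reversibility, and strict convexity. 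This is exactly what pins down the constant $\tfrac14$, and your consistency check agrees with how the paper itself uses the formula (the total mass $4\pi$ of $|\lambda_\tau|$ from \eqref{eq:wein} forcing geodesic length $2\pi$ in the proof of Theorem~\ref{theo:retractS2}). One refinement: the genericity discussion in your first paragraph is unnecessary. The area formula for densities holds for an arbitrary smooth map between manifolds of equal dimension, with Sard's theorem disposing of critical values and compactness of $\pi^{-1}(c)$ giving a.e.\ finiteness of the multiplicity, so the identity holds for every smooth curve $c$ directly, without any transversality assumption or approximation argument.
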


\begin{remark}
Strictly speaking, the integrand in the formula~\eqref{eq:crofton} should be $\#(\pi(q_F^{-1}(\gamma)) \cap c)$ instead of~$\#(\gamma \cap c)$, but as aforementioned, we identify the elements~$\gamma$ in~$\Gamma_F$ with the unparametrized geodesics~$\pi(q_F^{-1}(\gamma))$ they represent.
\end{remark}

\begin{remark}
The Crofton formula~\eqref{eq:crofton} shows that the Zoll Finsler metric~$F$ is uniquely determined by the submersion $q_F:S_0^*M \to \Gamma_F$ (and the symplectic form~$\lambda_F$ on~$\Gamma_F$ derived from~$q_F$).
\end{remark}

\section{Deforming Zoll Finsler two-spheres}

In this section, we construct a natural deformation of Zoll Finsler two-spheres to the canonical round two-sphere by applying the Crofton formula to the orbits of the converging family of circle actions given by the curvature flow, \cf~Theorem~\ref{theo:flowS2}.

\medskip

Consider a Zoll Finsler two-sphere~$(M,F)$ all of whose geodesics are of length~$2\pi$.
Let $\rho$ be a smooth free $S^1$-action on~$S_0^* M$ whose orbits are transverse to the contact structure given by the kernel of~$\pi_\sharp^*(\alpha)$.
(Recall that we identify $S^*M$ with~$S_0^*M$ under the musical diffeomorphisms~$\pi_\flat^*$ and~$\pi_\sharp^*$.)
Denote by $q_\rho:S_0^* M \to \Gamma_\rho$ the submersion induced by the free $S^1$-action~$\rho$, where $\Gamma_\rho = S_0^* M / \rho$.

\medskip

Define a volume form~$\Omega$ on~$S_0^* M$ as follows
\begin{equation} \label{eq:Omega}
\Omega = \pi_\sharp^*(\alpha \wedge d\alpha).
\end{equation}
Along with the $S^1$-action~$\rho$, this volume form gives rise to a two-form~$\bar{\omega}=\bar{\omega}_\rho$ on~$S_0^* M$ by the following averaging construction:

\begin{equation} \label{eq:omegabar}
\bar{\omega}_\rho = \frac{1}{2\pi} \int_{S^1} \rho(\theta)^*[i_\nu (\Omega)] \, d\theta
\end{equation}
where $\nu=\nu_\rho$ is the vector field on~$S_0^* M$ generated by the $S^1$-action~$\rho$, that is, 
\[
\nu(\xi) = \frac{d}{d\theta}_{|\theta=0} \rho(\theta)(\xi)
\]
for every $\xi \in S_0^* M$.
Thus,
\begin{equation} \label{eq:omegabar2}
\bar{\omega}_\xi(u,v) = \frac{1}{2\pi} \,  \int_{S^1} \Omega_{\rho(\theta)(\xi)}(d\rho(\theta)_{|\xi}(u),d\rho(\theta)_{|\xi}(v),\tfrac{d}{d\theta} \rho(\theta)(\xi) ) \, d\theta
\end{equation}
for every $\xi \in S_0^* M$ and $u,v \in T_\xi S_0^* M$.
Here, despite the ambiguity in the notation, $d\rho(\theta)_{|\xi}$ denotes the differential of the diffeomorphism $\rho(\theta):S_0^* M \to S_0^* M$ at~$\xi$.
By construction, the two-form~$\bar{\omega}_\rho$ is \mbox{$\rho$-invariant} and projects to a two-form~$\lambda_\rho$ on the quotient surface~$\Gamma_\rho=S^*_0 M/\rho$ with 
\[
\bar{\omega}_\rho=q_\rho^* \, \lambda_\rho.
\]
Up to the multiplicative factor~$\frac{1}{2\pi}$, the form~$\lambda_\rho$ is the two-form induced by~$\Omega$ by integration along the fibers of~$q_\rho$ (that is, the push-forward of~$\Omega$ by the fibration~$q_\rho$).
Note that both two-forms $\bar{\omega}_\rho$ and~$\lambda_\rho$ are determined by~$\rho$.

\medskip 

We have the following straightforward result.

\begin{lemma}
The two-form~$\lambda_\rho$ does not vanish (and so defines an area-form on~$\Gamma_\rho$).
\end{lemma}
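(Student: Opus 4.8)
The plan is to show that $\lambda_\rho$ is nondegenerate at every point of $\Gamma_\rho$ by working upstairs on $S_0^* M$ and analyzing the two-form $\bar{\omega}_\rho$. Since $q_\rho$ is a submersion with $\bar{\omega}_\rho = q_\rho^* \lambda_\rho$, the two-form $\lambda_\rho$ is nonvanishing at $q_\rho(\xi) \in \Gamma_\rho$ if and only if $\bar{\omega}_\rho$ restricts to a nondegenerate form on the quotient plane $T_\xi S_0^* M / \R.\nu(\xi)$, equivalently on any complement to the line $\R.\nu(\xi)$ spanned by the $\rho$-orbit direction. Because $\Gamma_\rho$ is a surface, this amounts to checking that $\bar{\omega}_\rho(u,v) \neq 0$ for some (hence any) pair $u,v$ completing $\nu(\xi)$ to a basis of $T_\xi S_0^* M$.

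The key idea is to read off positivity directly from the averaging formula~\eqref{eq:omegabar2}. First I would fix $\xi$ and a transversal plane $P \subset T_\xi S_0^* M$ with $T_\xi S_0^* M = P \oplus \R.\nu(\xi)$, and choose a basis $(u,v)$ of $P$ so that $(u,v,\nu(\xi))$ is positively oriented with respect to the volume form $\Omega$. For each $\theta$, the diffeomorphism $\rho(\theta)$ pushes this triple to $\bigl(d\rho(\theta)(u), d\rho(\theta)(v), \tfrac{d}{d\theta}\rho(\theta)(\xi)\bigr)$, whose last entry is exactly the orbit-generating vector $\nu$ at the point $\rho(\theta)(\xi)$. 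Since $\Omega = \pi_\sharp^*(\alpha \wedge d\alpha)$ is a genuine volume form and $\rho(\theta)$ is a diffeomorphism, the evaluation $\Omega_{\rho(\theta)(\xi)}(\cdots)$ in the integrand is the value of a volume form on an honest basis of the tangent space, hence nonzero, and its sign is determined by the orientation of that basis. The crucial structural input is that the $\rho$-orbits are \emph{transverse} to the contact structure given by the kernel of $\pi_\sharp^*(\alpha)$: this guarantees that $\nu$ is never tangent to this distribution, so the triple genuinely spans $T S_0^* M$ along the whole orbit and the orientation does not flip. Thus each integrand term has the same (say positive) sign for all $\theta$, and the integral of a strictly positive continuous function over $S^1$ is strictly positive.

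Concretely I would argue that the sign of $\Omega_{\rho(\theta)(\xi)}(d\rho(\theta)(u), d\rho(\theta)(v), \nu)$ is constant in $\theta$: at $\theta = 0$ it is positive by our choice of orientation of $(u,v,\nu(\xi))$, and it cannot vanish for any intermediate $\theta$ because vanishing would force $d\rho(\theta)(u), d\rho(\theta)(v), \nu$ to be linearly dependent. Linear dependence is ruled out by transversality to the contact distribution together with the fact that $d\rho(\theta)$ is an isomorphism sending the complement $P$ to a complement of the orbit line at $\rho(\theta)(\xi)$; by continuity in $\theta$ over the connected interval $S^1$ the sign is preserved. Therefore $\bar{\omega}_\xi(u,v) = \tfrac{1}{2\pi}\int_{S^1}(\text{positive})\,d\theta > 0$, so $\bar{\omega}_\rho$ is nondegenerate on $P$ and $\lambda_\rho$ is nonvanishing at $q_\rho(\xi)$.

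The main obstacle is verifying that the integrand never changes sign, i.e.\ controlling the orientation of the moving frame $(d\rho(\theta)(u), d\rho(\theta)(v), \nu(\rho(\theta)(\xi)))$ uniformly over the orbit. Everything hinges on the transversality hypothesis: if the orbit direction $\nu$ were tangent to the contact plane at some point, the frame could degenerate and the averaging could produce cancellation. Once transversality is used to show that the frame stays a basis throughout the orbit, the remaining steps are the routine facts that $q_\rho^*\lambda_\rho = \bar{\omega}_\rho$ and that nondegeneracy of $\bar{\omega}_\rho$ transverse to the fibers descends to nonvanishing of $\lambda_\rho$ on the two-dimensional quotient.
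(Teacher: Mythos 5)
Your proof is correct and is essentially the paper's argument: both reduce the statement to showing that the integrand in \eqref{eq:omegabar2}, namely $\Omega$ evaluated on the triple $\bigl(d\rho(\theta)_{|\xi}(u),\, d\rho(\theta)_{|\xi}(v),\, \tfrac{d}{d\theta}\rho(\theta)(\xi)\bigr)$, is nonzero for every $\theta$, hence of constant sign on the connected circle, hence has nonzero average. The only difference is in how the frame property is certified: you use that $d\rho(\theta)$ is an isomorphism carrying the orbit line to the orbit line (since the generator $\nu$ is invariant under its own flow), so it maps the basis $(u,v,\nu(\xi))$ to a basis; the paper instead lifts two independent vectors $\bar{u},\bar{v}$ from $\Gamma_\rho$ and notes that $d\rho(\theta)_{|\xi}(u)$ and $d\rho(\theta)_{|\xi}(v)$ still project to $\bar{u},\bar{v}$ under $dq_\rho$ while the third vector is vertical. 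Both are one-line linear-algebra verifications of the same fact.

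One correction, however: your repeated claim that ``everything hinges'' on the transversality of the $\rho$-orbits to the contact structure $\ker \pi_\sharp^*(\alpha)$ is a red herring. That hypothesis plays no role in this lemma: your own isomorphism argument already rules out degeneration of the frame with no reference whatsoever to the contact structure, and the paper's proof never invokes transversality either. Indeed, the lemma holds for an arbitrary smooth free $S^1$-action on $S_0^*M$. The transversality assumption is a standing hypothesis needed elsewhere in the section (for the \'Alvarez Paiva--Berck construction of the metric $F_\rho$ via the Crofton formula), not for the nondegeneracy of $\lambda_\rho$; presenting it as the crux misidentifies the mechanism that makes the averaging argument work.
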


\begin{proof}
Consider two independent vectors $\bar{u}$ and~$\bar{v}$ based at the same point tangent to~$\Gamma_\rho$.
Let $u$ and~$v$ be two lifts of~$\bar{u}$ and~$\bar{v}$, based at the same point~$\xi \in S_0^* M$, under the submersion~$q_\rho$.
That is, the vectors~$u$ and~$v$ tangent to~$S_0^* M$ at~$\xi$ project to~$\bar{u}$ and~$\bar{v}$ under~$dq_\rho$.
By construction, we have
\[
\lambda_\rho(\bar{u},\bar{v}) = \bar{\omega}_\xi(u,v).
\]
Furthermore, for every $\theta \in S^1$, the vectors $d\rho(\theta)_{|\xi}(u)$ and $d\rho(\theta)_{|\xi}(v)$ also project to~$\bar{u}$ and~$\bar{v}$ under~$dq_\rho$.
Now, since the vector~$\frac{d}{d\theta} \rho(\theta)(\xi)$ is tangent to the fibers of~$q_\rho$, it follows that the three vectors $d\rho(\theta)_{|\xi}(u)$, $d\rho(\theta)_{|\xi}(v)$ and $\frac{d}{d\theta} \rho(\theta)(\xi)$ form a basis of~$T_{\rho(\theta)(\xi)} S_0^* M$.
Thus, the value of the volume form~$\Omega_{\rho(\theta)(\xi)}$ at these vectors is nonzero (and so of constant sign) for every $\theta \in S^1$.
From the expression~\eqref{eq:omegabar2}, we conclude that both $\bar{\omega}_\xi(u,v)$ and $\lambda_\rho(\bar{u},\bar{v})$ are nonzero.
Hence the result.
\end{proof}

By fiber integration and Fubini's theorem, we have the following relation
\[
\int_{S_0^* M} \Omega = 2 \pi \int_{\Gamma_\rho} \lambda_\rho.
\]
In particular, the integral of the (non-vanishing) area-from~$|\lambda_\rho|$ over~$\Gamma_\rho$ is equal to~$4 \pi$ when~$\rho$ is given by the co-geodesic flow of~$F$, \cf~\eqref{eq:wein}.

\medskip

From now on, suppose that the $S^1$-action~$\rho$ is symmetric, that is,
\[
\rho(\theta)(-\xi) = - \rho(-\theta)(\xi)
\]
for every $\theta \in S^1=\R/2\pi \Z$ and~$\xi \in S_0^* M$.
This is the case when $\rho$ is given by the co-geodesic flow of~$F$, and more generally, by its one-parameter family of deformations~$(\rho_\tau)_{0 \leq \tau \leq 1}$ defined at the end of Section~\ref{sec:flow}, \cf~\eqref{eq:sym}.
The symmetric $S^1$-action~$\rho$ induces a submersion
\[
p_\rho:S_0^* M \to \Gamma_\rho
\]
as follows
\[
p_\rho(\xi)=\gamma \Leftrightarrow \ker \xi \mbox{ is tangent to the non-oriented curve } \gamma
\]
where $\xi \in S_0^* M$ and $\gamma \in \Gamma_\rho$.
Clearly, the map~$p_\rho$ induces a submersion $PT^*M \to \Gamma_\rho$ to the quotient $PT^*M = S_0^* M/\pm$ representing the space of contact elements of~$M$.
The fibers of this submersion are transverse to those of the canonical projection $PT^*M \to M$.
Furthermore, these fibers are Legendrian with respect to the contact structure induced by~$\alpha$.
Indeed, let $\xi \in p_\rho^{-1}(\gamma)$ and $V \in T_\xi p_\rho^{-1}(\gamma)$.
The vector~$d \pi_\xi (V)$, based at~$\pi(\xi)$, is tangent to~$\gamma$.
Hence, $\xi(d \pi_\xi (V))=0$ since $p_\rho(\xi)=0$, by definition of~$p_\rho$.
That is, $\alpha_\xi(V)=0$ for every $V \in T_\xi p_\rho^{-1}(\gamma)$.
Note also that the product map $\pi \times p_\rho: S_0^* M \to M \times \Gamma_\rho$ is an embedding.

\medskip

We can now apply the results of~\cite{AB} asserting that a (non-vanishing) area-form on~$\Gamma_\rho$ gives rise to a Finsler metric on~$M$ via the Crofton formula.

\begin{theorem}[{\cite[Theorem~2.2]{AB}}] \label{theo:AB}
With the previous notations, there exists a unique Finsler metric~$F_\rho$ on~$M$ satisfying the Crofton formula
\begin{equation} \label{eq:croftonrho}
\length_{F_\rho}(c) = \frac{1}{4} \int_{\gamma \in \Gamma_\rho} \#(\gamma \cap c) \, |\lambda_\rho|
\end{equation}
for any smooth curve~$c$ on~$M$.

Morevover, the Finsler metric~$F_\rho$ admits the following expression: for every $x \in M$, there exists a unique non-vanishing one-form~$\beta_x$ on~$S_x^*M$ such that
\begin{equation} \label{eq:Frho}
F_\rho(x;v) = \int_{\xi \in S_x^*M} |\xi(v)| \, \beta_x
\end{equation}
for every $v \in T_x M$, where the non-vanishing one-form~$\beta_x$ is defined for every $\xi \in S_x^* M$ by the relation
\begin{equation} \label{eq:beta}
(\bar{\omega}_\rho)_{(x,\xi)} = \pi^* \xi \wedge \beta_{(x,\xi)}.
\end{equation}
Furthermore, the one-form~$\beta_x$ smoothly depends on~$x$.
\end{theorem}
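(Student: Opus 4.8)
The plan is to read the two displayed identities \eqref{eq:beta} and \eqref{eq:Frho} as the \emph{definitions} of $\beta_x$ and of $F_\rho$, and then to prove, in order, that (i) \eqref{eq:beta} does determine a unique smooth nowhere-vanishing density $\beta_x$ on the cosphere $S_x^*M$, (ii) the function $F_\rho$ defined by \eqref{eq:Frho} is a genuine reversible Finsler metric, (iii) this $F_\rho$ satisfies the Crofton formula \eqref{eq:croftonrho}, and (iv) it is the only metric doing so. The existence-and-uniqueness assertion is the content of~\cite[Theorem~2.2]{AB}, and I would reproduce its argument, the only extra input being the verification that the data $(\Gamma_\rho,|\lambda_\rho|)$ produced by a symmetric free $S^1$-action with contact-transverse orbits meets its hypotheses; the general Crofton identity of Theorem~\ref{theo:crofton} (\cf~\cite{AB06}) serves as the model for step (iii).

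First I would pin down $\beta_x$. The point is that $\beta_x$ is nothing but the disintegration of the area density $|\lambda_\rho|$ along the fibres of the product embedding $\pi\times p_\rho:S_0^*M\hookrightarrow M\times\Gamma_\rho$: fixing $x$, the curves through $x$ are indexed by their conormal covector $\xi\in S_x^*M$ via $p_\rho$, and \eqref{eq:beta} extracts the corresponding one-form on the circle $S_x^*M$ by decomposing $\bar{\omega}_\rho$ against the tautological direction $\pi^*\xi$. That this decomposition has a unique solution, and that $\beta_x$ is nowhere zero, is exactly the transverse non-degeneracy of $\bar{\omega}_\rho$ recorded in the Lemma that $\lambda_\rho$ does not vanish, together with the transversality of the fibres of $\pi\times p_\rho$ to those of $\pi$; smoothness of $\beta_x$ in $x$ is then inherited from the smoothness of $\bar{\omega}_\rho$ and of the fibration. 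The symmetry hypothesis \eqref{eq:sym} is what makes $p_\rho$, hence $\beta_x$, well defined on the unoriented contact manifold $PT^*M$; reversibility of $F_\rho$ will in any case be automatic from the absolute value in \eqref{eq:Frho}.

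Next I would check that \eqref{eq:Frho} is a Finsler metric. Positive homogeneity is immediate, and $F_\rho(x;v)>0$ for $v\neq 0$ because $\beta_x$ is a positive density and the covectors $\xi\in S_x^*M$ span $T_x^*M$, so the integrand cannot vanish identically. The substantive point, and the one I expect to be the main obstacle, is quadratic convexity. The map $v\mapsto\int_{S_x^*M}|\xi(v)|\,\beta_x$ is exactly the support function of the symmetric convex body $Z_x=\int_{S_x^*M}[-\xi,\xi]\,\beta_x$, \ie $F_\rho(x;\cdot)$ is a norm of zonoid type; I would have to show that a \emph{smooth positive} weight $\beta_x$ forces $Z_x$ to be smooth with strictly positive curvature, equivalently that the fibrewise Hessian of $F_\rho^2$ is positive definite off the zero section. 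This is the analytic heart of~\cite{AB}: differentiating \eqref{eq:Frho} twice in $v$ and working in a frame of $T_xM$, the degeneracy of the Hessian would force the push-forward of $\beta_x$ to the circle of directions to be supported on a great subfamily, which a smooth nowhere-zero $\beta_x$ precludes; the two-dimensionality of $M$ keeps this computation to a single angular integral and lets me invoke the classical zonoid criterion to conclude.

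Finally I would establish the Crofton formula and uniqueness. Localizing, a generic curve $\gamma\in\Gamma_\rho$ meets a short segment issued from $x$ in the direction $v$ at most once, and the $|\lambda_\rho|$-measure of those that meet it is, to first order in the length of the segment, governed precisely by the conormal weight $\beta_x$ and the crossing factor $|\xi(v)|$; after dividing by the length this reproduces \eqref{eq:Frho} and fixes the pointwise statement. To pass to an arbitrary smooth curve $c$ I would write $\length_{F_\rho}(c)=\int_c\int_{S_{c(t)}^*M}|\xi(c'(t))|\,\beta\,dt$ and run a Fubini computation along the double fibration, using that $\pi\times p_\rho$ is an embedding so that no intersection of $c$ with a curve $\gamma$ is counted twice; exchanging the integrations identifies the inner integral with the fibre push-forward of $\Omega$ computed in \eqref{eq:omegabar2}, and yields $\tfrac14\int_{\gamma\in\Gamma_\rho}\#(\gamma\cap c)\,|\lambda_\rho|$, the constant $\tfrac14$ being forced by the normalization \eqref{eq:wein} and the factor two coming from the two conormal covectors $\pm\xi$ at each crossing. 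Uniqueness is then immediate: a reversible Finsler metric is determined by its length functional on curves, hence by its restriction to infinitesimal segments, so any two metrics obeying \eqref{eq:croftonrho} coincide, which completes the proof of Theorem~\ref{theo:AB}.
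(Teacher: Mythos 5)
The first thing to say is that the paper does \emph{not} prove this statement: Theorem~\ref{theo:AB} is imported wholesale from \'Alvarez Paiva--Berck, \cite[Theorem~2.2]{AB}, and the only ingredients the paper itself supplies are the verification of its hypotheses (the Lemma that $\lambda_\rho$ is a non-vanishing area form, the symmetry assumption on $\rho$, and the fact that the fibers of $p_\rho$ are Legendrian, proved just before the statement) together with the remark following it. So there is no internal proof to compare you against; what can be judged is whether your reconstruction of the cited argument is sound. Structurally it is, and it matches the architecture of~\cite{AB}: read \eqref{eq:beta} and \eqref{eq:Frho} as definitions, prove quadratic convexity by the two-dimensional zonoid/cosine-transform criterion applied to the smooth positive weight $\beta_x$, obtain \eqref{eq:croftonrho} by Fubini along the double fibration, and get uniqueness from the fact that the length functional determines a reversible Finsler metric on tangent vectors.

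Two details in your sketch are, however, wrong or misattributed. First, your justification of step (i): the solvability of \eqref{eq:beta} does not follow from the non-vanishing of $\lambda_\rho$ together with "transversality of the fibres of $\pi\times p_\rho$ to those of $\pi$". A $2$-form on a $3$-manifold is divisible by a $1$-form $\pi^*\xi$ if and only if $\pi^*\xi\wedge\bar\omega_\rho=0$, i.e.\ if and only if the one-dimensional kernel of the $2$-form lies in $\ker\pi^*\xi$; this is exactly what the Legendrian property of the $p_\rho$-fibers provides, and it is why the paper bothers to establish that property immediately before the theorem. Transversality alone cannot suffice: for $\rho=\rho_F$ one has $\bar\omega_\rho=d\alpha$ (Remark~\ref{rk:startend}) while $\pi^*\xi=\alpha$ at the point $(x,\xi)$, and $d\alpha$ is not divisible by $\alpha$ precisely because $\alpha\wedge d\alpha\neq 0$; the decomposition \eqref{eq:beta} must therefore be read at the \emph{conormal} covector of the curve through $x$ (that is, through the fibration $p_\rho$, with $p_\rho(\xi)=\gamma$ iff $\ker\xi$ is tangent to $\gamma$), not at the covector attached to the $q_\rho$-orbit, and your appeal to "transverse non-degeneracy" misses this mechanism entirely. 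Second, the constant $\tfrac14$ in \eqref{eq:croftonrho} is a universal feature of the Crofton formula -- one factor $\tfrac12$ because each unoriented geodesic appears twice in $\Gamma_\rho$ with its two orientations, and one factor $\tfrac12$ because the two conormal covectors $\pm\xi$ at a crossing define the same line -- and is not "forced by the normalization \eqref{eq:wein}"; the identity \eqref{eq:wein} plays no role in Theorem~\ref{theo:AB} and is only used later, in the proof of Theorem~\ref{theo:retractS2}, to see that $|\lambda_\rho|$ has total mass $4\pi$ and hence that the $F_\rho$-geodesics have length $2\pi$.
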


Note that the relation~\eqref{eq:beta} allows us to define the one-form~$\beta_{(x,\xi)}$ in a unique way only on~$T_\xi S_x^* M$, not on~$T_{(x,\xi)} S^* M$.

\medskip

Before making use of Theorem~\ref{theo:AB}, let us mention three important observations that will be useful in the proof of Theorem~\ref{theo:retractS2} below.

\medskip

The first one deals with the dependance of~$F_\rho$ with respect to~$\rho$.

\begin{remark}
By construction, both forms~$\bar{\omega}_\rho$ and~$\beta=\beta_\rho$, \cf~\eqref{eq:omegabar} and~\eqref{eq:beta}, continuously vary with the $S^1$-action~$\rho$.
It follows from the expression of~$F_\rho$, \cf~$\eqref{eq:Frho}$, that the Finsler metric~$F_\rho$ continuously varies with~$\rho$ too.
\end{remark}

The second observation is about the geodesics of~$F_\rho$.

\begin{remark} \label{rk:geodesic}
In the two-dimensional case, it follows from the Crofton formula that the geodesics of~$F_\rho$ are exactly the curves~$\pi(q_\rho^{-1}(\gamma))$ given by~$\gamma \in \Gamma_\rho$, see~\cite[Theorem~3.3]{AB}.
\end{remark}

The third observation deals with $S^1$-actions arising from the co-geodesic flow of the Zoll Finsler two-sphere~$(M,F)$.

\begin{remark} \label{rk:startend}
In the special case when $\rho$ is given by the co-geodesic flow of~$F$, that is, $\rho=\rho_F$, the following properties hold.
The vector field~$\nu$ agrees with the co-geodesic vector field~$X_F$ of~$F$ on~$S_0^*M \simeq S^* M$.
(In the sequel, the musical isomorphisms will be omitted.)
Since $i_{X_F}(\alpha) = 1$, we deduce that $i_\nu(\Omega) = d\alpha$ from the expression of the volume form~$\Omega$, \cf~\eqref{eq:Omega}.
Now, by the Liouville theorem, the symplectic form~$d\alpha$ is $\rho_F$-invariant, that is, $\rho_F(\theta)^* (d\alpha) = d\alpha$ for every $\theta \in S^1$.
This shows that $\bar{\omega} = d\alpha$.
Hence, $\lambda_\rho = \lambda_F$ by definition of~$\lambda_F$, \cf~\eqref{eq:lambda}.
Now, as $\Gamma_\rho=\Gamma_F$ and $q_\rho = q_F$, it follows from the Crofton formulas \eqref{eq:crofton} and~\eqref{eq:croftonrho} that $F_\rho = F$.
%In particular, this relation holds true when $\rho$ is the co-geodesic flow of the canonical round metric~$g_0$ on the two-sphere, that is, $F_\rho = g_0$ when~$\rho=\rho_{g_0}$.
\end{remark}

We can now proceed to the proof of the following result.

\begin{theorem} \label{theo:retractS2}
The space of balanced Zoll Finsler metrics on the two-sphere whose geodesic length is equal to~$2\pi$ strongly deformation retracts to the canonical round metric on the two-sphere.

Furthermore, this strong deformation retraction is induced by the curvature flow on the canonical round two-sphere.
\end{theorem}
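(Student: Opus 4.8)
The plan is to assemble the two constructions developed above into a single retraction: the curvature-flow family of circle actions from Theorem~\ref{theo:flowS2} and the Crofton reconstruction of Finsler metrics from Theorem~\ref{theo:AB}. Given a balanced Zoll Finsler two-sphere~$(M,F)$ of geodesic length~$2\pi$, let $\rho_F$ be the $S^1$-action on~$S_0^* M$ induced by its co-geodesic flow and let $(\rho_\tau)_{0 \leq \tau \leq 1}$ be the family of symmetric free $S^1$-actions obtained from the curvature flow, so that $\rho_0 = \rho_F$ and $\rho_1 = \rho_\infty$ is the co-geodesic flow of~$g_0$. I would set
\[
H(F,\tau) = F_{\rho_\tau},
\]
where $F_{\rho_\tau}$ is the Finsler metric attached to~$\rho_\tau$ by Theorem~\ref{theo:AB}. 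The metric $F_{\rho_\tau}$ is reversible, being even in~$v$ by the expression~\eqref{eq:Frho}. At the endpoints, Remark~\ref{rk:startend} gives $H(F,0) = F_{\rho_F} = F$ and, applied to the round metric, $H(F,1) = F_{\rho_\infty} = g_0$.

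First I would check that $H$ never leaves the space, that is, that each $F_{\rho_\tau}$ is a balanced Zoll Finsler metric of geodesic length~$2\pi$. By Remark~\ref{rk:geodesic}, the geodesics of~$F_{\rho_\tau}$ are the projected $\rho_\tau$-orbits $\pi(q_{\rho_\tau}^{-1}(\gamma))$, which are embedded circles by Theorem~\ref{theo:flowS2}; hence all geodesics of~$F_{\rho_\tau}$ are simple closed curves and $F_{\rho_\tau}$ is Zoll by Proposition~\ref{prop:equiv}. As unparametrized curves, these geodesics are the images of the $F$-geodesics under the curvature flow of the $g_0$-sphere; since $(M,F)$ is balanced, each $F$-geodesic divides the round sphere into two domains of equal $g_0$-area, and this is preserved by the flow, \cf~Theorem~\ref{theo:prop}.\eqref{c5}. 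Thus $F_{\rho_\tau}$ is balanced as well.

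The geodesic length is the key point. Since the volume form $\Omega = \pi_\sharp^*(\alpha \wedge d\alpha)$ of~\eqref{eq:Omega} does not depend on the action~$\rho$, the Weinstein normalization~\eqref{eq:wein} gives $\int_{S_0^* M} \Omega = \pm\, 8\pi^2$ for every~$\tau$, and the Fubini relation $\int_{S_0^* M}\Omega = 2\pi \int_{\Gamma_{\rho_\tau}}\lambda_{\rho_\tau}$ then yields $\int_{\Gamma_{\rho_\tau}} |\lambda_{\rho_\tau}| = 4\pi$. Applying the Crofton formula~\eqref{eq:croftonrho} to a geodesic~$\gamma_0$ of the (now Zoll) metric~$F_{\rho_\tau}$, and invoking Theorem~\ref{theo:2}, so that $\#(\gamma \cap \gamma_0) = 2$ for every geodesic $\gamma \neq \gamma_0$, gives
\[
\length_{F_{\rho_\tau}}(\gamma_0) = \frac{1}{4}\int_{\Gamma_{\rho_\tau}} \#(\gamma \cap \gamma_0)\, |\lambda_{\rho_\tau}| = \frac{1}{2}\int_{\Gamma_{\rho_\tau}} |\lambda_{\rho_\tau}| = 2\pi,
\]
the diagonal $\gamma = \gamma_0$ being negligible. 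So every $F_{\rho_\tau}$ has geodesic length~$2\pi$.

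It remains to verify that $H$ is a strong deformation retraction onto~$g_0$. Continuity of $(F,\tau)\mapsto F_{\rho_\tau}$ follows from the continuous dependence of~$\rho_\tau$ on~$F$ and~$\tau$ (the $F$-geodesics and their curvature-flow evolution vary continuously) together with the continuous dependence of~$F_\rho$ on~$\rho$ recorded after Theorem~\ref{theo:AB}. Finally, the round metric is fixed throughout: when $F = g_0$ the $F$-geodesics are equators, which are $g_0$-geodesics, so the curvature flow is stationary on them by Theorem~\ref{theo:prop}.\eqref{c1}; hence $\rho_\tau = \rho_{g_0}$ for all~$\tau$ and $H(g_0,\tau) = g_0$. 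The main obstacle is not a single estimate but this invariance-of-the-space package, especially the length normalization, which hinges on the total Crofton mass being the fixed value~$4\pi$ precisely because $\Omega$ is independent of the action, together with the intersection count of Theorem~\ref{theo:2}.
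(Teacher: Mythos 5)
Your construction coincides with the paper's up to time $\tau=1$, but the endpoint claim $H(F,1)=F_{\rho_\infty}=g_0$ is where the argument breaks. Remark~\ref{rk:startend} cannot be ``applied to the round metric'' here: it requires that the volume form entering the averaging construction be the contact volume of the \emph{same} metric whose co-geodesic flow gives the action. In the construction of $F_\rho$, the form $\Omega=\pi_\sharp^*(\alpha\wedge d\alpha)$ of~\eqref{eq:Omega} is fixed once and for all from the original metric~$F$ (it is the $F$-contact volume transplanted to $S_0^*M$ by the musical identification), and it is this same $\Omega$ that is averaged over $\rho_\tau$ for every~$\tau$. At $\tau=1$ the action $\rho_1$ is the co-geodesic flow of~$g_0$, but $\Omega$ is not the $g_0$-contact volume on $S_0^*M$; consequently $i_\nu\Omega$ need not equal $d\alpha$, the averaged form $\bar{\omega}_{\rho_1}$ need not be $d\alpha$, and the quotient measure $|\lambda_1|$ on $\Gamma_1=\Gamma_{g_0}$ need not be the standard measure $|\lambda_{g_0}|$ on the space of great circles. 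What you can conclude is only that $F_1$ is a balanced Zoll Finsler metric of geodesic length~$2\pi$ whose geodesics are the great circles; since the Crofton formula determines the metric from the measure, $F_1=g_0$ exactly when $|\lambda_1|=|\lambda_{g_0}|$, which fails in general. So your homotopy retracts the space onto the set of Crofton metrics whose geodesics are great circles, not onto $\{g_0\}$.

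The paper closes this gap with a second stage that you are missing: for $1\le\tau\le2$ it keeps the fibration $q_\tau=q_1$ (hence the geodesics) fixed and linearly interpolates the measure, $|\lambda_\tau|=(2-\tau)\,|\lambda_1|+(\tau-1)\,|\lambda_{g_0}|$, feeding each of these measures into Theorem~\ref{theo:AB}. This stays inside the space of balanced Zoll metrics of geodesic length~$2\pi$ (the total mass remains $4\pi$, so the intersection-count argument via Theorem~\ref{theo:2} still gives geodesic length $2\pi$), it is constant when $F=g_0$ (since then $|\lambda_1|=|\lambda_{g_0}|$), and it ends at $F_2=g_0$ because the metric determined by $(\Gamma_{g_0},|\lambda_{g_0}|)$ is $g_0$ itself. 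The remainder of your argument (preservation of the Zoll and balanced properties, the $4\pi$ normalization, and the fixed-point property at $g_0$ along the first stage) does match the paper.
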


\begin{proof}
Let $(M,F)$ be a balanced Zoll Finsler two-sphere all of whose geodesics are of length~$2\pi$.
Consider the deformation~$(\rho_\tau)_{0 \leq \tau \leq 1}$ of $S^1$-actions on~$S_0^* M$ defined at the end of Section~\ref{sec:flow}, where $\rho_0$ is given by the co-geodesic flow of~$F$ and $\rho_1$ agrees with the co-geodesic flow of the canonical round metric~$g_0$.
Since every $\rho_\tau$-orbit projects to an embedding of~$S^1$ into~$M$ by the canonical projection~$S_0^* M \to M$, these orbits are transverse to the contact structure given by the kernel of~$\pi_\sharp^*(\alpha)$.

Define a one-parameter family of Finsler metrics~$F_\tau = F_{\rho_\tau}$ for \mbox{$0 \leq \tau \leq 1$} as in Theorem~\ref{theo:AB}.
We will also write $q_\tau$,  $\Gamma_\tau$ and $\lambda_\tau$ for~$q_{\rho_\tau}$, $\Gamma_{\rho_\tau}$ and~$\lambda_{\rho_\tau}$.
From Remark~\ref{rk:startend}, the metric deformation~$(F_\tau)$ starts at~$F$, that is, $F_0=F$.
By Remark~\ref{rk:geodesic}, the geodesics of~$F_\tau$ are precisely the simple closed curves represented by~$\Gamma_\tau$, namely, the curves~$\pi(q_\tau^{-1}(\gamma))$ where $\gamma$ runs over~$\Gamma_\tau$.
Thus, the geodesics of~$F_\tau$ agree with the images of the geodesics of~$F$ under the curvature flow at some time depending on~$\tau$.
In particular, the metrics~$F_\tau$ are balanced Zoll Finsler metrics, \cf~Proposition~\ref{prop:equiv}, and the metric~$F_1$ has the same geodesics as the canonical round metric~$g_0$.

At this point, we do not claim that $F_1$ agrees with~$g_0$.
Indeed, by construction, the metric~$F_1$ is determined by its space of geodesics $\Gamma_1 = \Gamma_{g_0}$ \emph{and} a smooth positive measure~$|\lambda_1|$ on it, which, in this case, may differ from~$|\lambda_{g_0}|$.
This leads us to extend the metric deformation~$(F_\tau)$ in a natural way as follows.
For every $1 \leq \tau \leq 2$, define a Finsler metric~$F_\tau$ as in Theorem~\ref{theo:AB} with $\Gamma_\tau=\Gamma_1$, $q_\tau=q_1$ and 
\[
|\lambda_\tau|=(2-\lambda) |\lambda_1| + (\tau-1) |\lambda_{g_0}|.
\]
As previously, the geodesics of these new metrics agree with those of the canonical round sphere, but now, $F_2$ is equal to~$g_0$, since $|\lambda_2| = |\lambda_{g_0}|$.

We can also estimate the lengths of the geodesics of~$F_\tau$ as follows. 
Let~$c_0$ be a geodesic of~$F_\tau$.
By Theorem~\ref{theo:2}, every pair of distinct closed geodesics of~$F_\tau$ has exactly two intersection points.
Hence, $\#(\gamma \cap c_0)=2$ for almost every~$\gamma \in \Gamma_\tau$.
Since the integral of~$|\lambda_\tau|$ equals~$4 \pi$, it follows from the Crofton formula~\eqref{eq:croftonrho} that the length of~$c_0$ is equal to~$2 \pi$.

In conclusion, the metric deformation~$(F_\tau)$ gives rise to a retraction from the space of balanced Zoll Finsler metrics on the two-sphere with geodesic length~$2 \pi$ to the canonical round metric~$g_0$ on the two-sphere.
\end{proof}

\begin{remark}
Theorem~\ref{theo:retractrp2} follows from Theorem~\ref{theo:retractS2} by taking the orientable double cover of the Zoll Finsler projective plane since, by construction, the strong deformation retraction on the two-sphere passes to the quotient by the antipodal map.
\end{remark}

%if $\Omega$ is a volume form on~$\Gamma$, then the Gelfand transform $F=\Pi_*(\pi^* |\Omega|)$ is a Finsler metric on~$M$ which satisfies the Crofton formula~$\eqref{eq:crofton}$ and whose geodesics are precisely given by~$\Gamma$. \\

%This result applies to the $S^1$-action~$\rho_F$ on~$S^*_0 M$ given by the co-geodesic flow of~$F$, \cf~Section~\ref{sec:action}, to the deformed $S^1$-action~$\rho_t$, \cf~Equation~\eqref{eq:tsv}, and their limit~$\rho_\infty$, \cf~Proposition~\ref{prop:rho}. \\

\section{Appendix}

In this appendix, we show how flexible Zoll Finsler metrics are.
Given a closed Zoll Finsler manifold~$M$, we construct an infinite-dimensional family of Zoll Finsler metrics on~$M$ with the same geodesic length.
The construction -- proceeding by local perturbations of the initial metric -- is loosely constrained and fairly easy to implement.
Furthermore, all local Zoll perturbations are obtained by following this construction.

\medskip

The results in this section are not new, but we provide details we were unable to find in the literature.
Our presentation follows the approach of~\cite{Iv13} and \cite{BI} regarding boundary rigidity problems (and used in~\cite{chen} to study Finsler tori without conjugate points).
It largely borrows from~\cite{chen}. 

\medskip

Let $(M,F)$ be a closed Finsler $n$-manifold.
Fix an open ball~$D$ in~$M$ of radius less than $\frac{1}{4} \inj(M)$ centered at~$x_0$ .
For every $p \in \partial D$, the arclength parametrized geodesic~$\gamma_p$ with $\gamma_p(0)=x_0$ and~$\gamma_p(r)=p$ defines a point~$\gamma_p(-r)$ lying in~$\partial D$ denoted by~$-p$.
The set of points of~$D$ equidistant from~$p$ and~$-p$ forms a hypersurface~$H_p$ passing through~$x_0$ which divides~$D$ into two connected components: $H_p^+$ containing~$p$ and~$H_p^-$ containing~$-p$.

Define a smooth function $f:\partial D \times D \to \R$ as
\[
f(p,x) = 
\begin{cases}
d(H_p,x) & \mbox{if } x \in H_p^+ \\
-d(H_p,x) & \mbox{otherwise}
\end{cases}
\]
For every $p \in \partial D$, denote $f_p=f(p,\cdot)$.
The function~$f$ is an \emph{enveloping function}, that is, it satisfies the following conditions:
\begin{enumerate}
\item for every $x \in D$, the map $p \mapsto df_p(x)$ is a diffeomorphism from~$\partial D$ to the boundary of a quadratically convex body of~$T_x^* M$ containing the origin; \label{env1}
\item for every $p \in \partial D$, we have $f_{-p}=-f_p$. \label{env2}
\end{enumerate}
In our case, the boundary of the quadratically convex body in~\eqref{env1} is~$S_x^* M$ since $F^*(df_p(x))=1$ for every $p \in \partial D$ and~$x \in D$.
The condition~\eqref{env2} ensures that the convex bodies in~\eqref{env1} are symmetric with respect to the origin.

The distance function induced by~$F$ can be written as
\begin{equation} \label{eq:dF}
d_F(x,y) = \sup_{p \in \partial D} f_p(x) - f_p(y)
\end{equation}
for every $x,y \in D$.
Similarly, the Finsler metric~$F$ can be expressed as
\[
F(v) = \sup_{p \in \partial D} df_p(v)
\]
for every $v \in TD$.

\medskip

Consider a sufficiently small $C^\infty$-perturbation~$\tilde{f}$ of~$f$ such that $\tilde{f}$ is an enveloping function which agrees with~$f$ on~$\partial D \times U$, where $U$ is a tubular neighborhood of~$\partial D$ in~$D$.
Note that if the perturbation is small enough, the condition~\eqref{env1} is immediately satisfied by~$\tilde{f}$.

Define a new Finsler metric~$\tilde{F}$ on~$D$ as
\[
\tilde{F}(v) = \sup_{p \in \partial D} d\tilde{f}_p(v)
\]
for every $v \in TD$.
The induced distance is given by
\begin{equation} \label{eq:dFF}
d_{\tilde{F}}(x,y) = \sup_{p \in \partial D} \tilde{f}_p(x) - \tilde{f}_p(y)
\end{equation}
for every $x,y \in D$.
Here, the reversibility of~$\tilde{F}$ (and symmetry of~$d_{\tilde{F}}$) follows from the condition~\eqref{env2}.
Also, the function~$\tilde{f}_p$ satisfies $\tilde{F}^*(d\tilde{f}_p(x))=1$ for every~$x \in D$.
Moreover, for every~$x \in D$, there exists a unique $\tilde{F}$-unit tangent vector~$v \in T_x D$ such that $d\tilde{f}_p(v)=1$.
This tangent vector smoothly depends on~$x$ (and~$p$) and defines an $\tilde{F}$-unit vector field~$\tilde{\nabla} \tilde{f}_p$ on~$D$, called the \emph{$\tilde{F}$-gradient} of~$\tilde{f}_p$. 
Since $F$ and~$\tilde{F}$ agree on~$U$ (as do $f$ and~$\tilde{f}$ on~$\partial D \times U$), we can extend~$\tilde{F}$ by letting $\tilde{F}=F$ outside~$D$.

\medskip

The geodesics of~$\tilde{F}$ can be described as follows.

\begin{proposition}
The geodesics of~$\tilde{F}$ agree with the integral curves of~$\tilde{\nabla} \tilde{f}_p$ on~$D$, with~$p \in \partial D$.
Furthermore, these curves are $\tilde{F}$-minimizing on~$D$.
\end{proposition}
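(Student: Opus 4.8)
The plan is to use the eikonal identity $\tilde{F}^*(d\tilde{f}_p)=1$, which turns each function $\tilde{f}_p$ into a calibration for $\tilde{F}$-length. I would first record two pointwise facts along an integral curve $\gamma$ of the smooth unit field $\tilde{\nabla}\tilde{f}_p$. By the very definition of the $\tilde{F}$-gradient, $\tilde{F}(\gamma')=1$, so $\gamma$ is parametrized by $\tilde{F}$-arclength; and
\[
\tfrac{d}{dt}\,\tilde{f}_p(\gamma(t)) = d\tilde{f}_p(\gamma'(t)) = d\tilde{f}_p(\tilde{\nabla}\tilde{f}_p) = 1,
\]
so $\tilde{f}_p$ grows at unit speed along $\gamma$. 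In particular, if $\gamma$ runs from $x$ to $y$, then $\length_{\tilde{F}}(\gamma) = \tilde{f}_p(y) - \tilde{f}_p(x)$.

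Next I would establish minimality by the standard calibration inequality. For any Minkowski norm one has $\xi(v) \le \tilde{F}^*(\xi)\,\tilde{F}(v)$, so taking $\xi = d\tilde{f}_p$ and using $\tilde{F}^*(d\tilde{f}_p)=1$ gives $d\tilde{f}_p(v) \le \tilde{F}(v)$ for every $v\in TD$. Hence for an arbitrary curve $c$ from $x$ to $y$ in $D$,
\[
\length_{\tilde{F}}(c) = \int_c \tilde{F}(\dot c)\,dt \ \ge\ \int_c d\tilde{f}_p(\dot c)\,dt = \tilde{f}_p(y) - \tilde{f}_p(x) = \length_{\tilde{F}}(\gamma).
\]
Thus the integral curve $\gamma$ minimizes $\tilde{F}$-length among all curves joining its endpoints, and $\tilde{f}_p(y)-\tilde{f}_p(x) = d_{\tilde{F}}(x,y)$ along it, consistently with~\eqref{eq:dFF}. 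Since an arclength-parametrized length minimizer is a geodesic, every integral curve of $\tilde{\nabla}\tilde{f}_p$ is a minimizing $\tilde{F}$-geodesic on~$D$.

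It remains to see that these exhaust the geodesics, and here I would invoke the enveloping condition~\eqref{env1}. At each $x\in D$ the map $p\mapsto d\tilde{f}_p(x)$ is a diffeomorphism from $\partial D$ onto $S_x^*M$, while the $\tilde{F}$-gradient assigns to $d\tilde{f}_p(x)$ its Legendre dual, namely the unique $\tilde{F}$-unit vector $v$ with $d\tilde{f}_p(v)=1$. Because quadratic convexity makes the Legendre transform a bijection $S_x^*M \to S_xM$, the vectors $\{\tilde{\nabla}\tilde{f}_p(x): p\in\partial D\}$ sweep out the entire unit tangent sphere $S_xM$. Therefore every $\tilde{F}$-unit vector $v$ at $x$ equals $\tilde{\nabla}\tilde{f}_p(x)$ for some $p$, and by uniqueness of geodesics the $\tilde{F}$-geodesic with initial condition $(x,v)$ coincides with the integral curve of $\tilde{\nabla}\tilde{f}_p$ through $x$. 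This shows the geodesics of $\tilde{F}$ on $D$ are precisely the integral curves of the fields $\tilde{\nabla}\tilde{f}_p$, $p\in\partial D$.

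The calibration inequality is routine; the step I expect to require the most care is the converse, which rests on the enveloping property~\eqref{env1} persisting for the perturbed function $\tilde{f}$ and on the bijectivity of the Legendre transform supplied by quadratic convexity. One should also keep in mind that the calibration bounds length only for curves contained in $D$; since the assertion is local on $D$ this is enough, but it reflects the fact that the $\tilde{f}_p$ are genuinely defined only on the chart where the construction is carried out.
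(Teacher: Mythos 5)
Your proof is correct and follows essentially the same route as the paper: both establish minimality of the integral curves via the eikonal/calibration property of $\tilde{f}_p$ (your pointwise inequality $d\tilde{f}_p(v)\le\tilde{F}(v)$ is just the infinitesimal form of the paper's ``$\tilde{f}_p$ is $\tilde{F}$-nonexpanding''), and both prove the converse by using condition~\eqref{env1} to realize every $\tilde{F}$-unit vector as $\tilde{\nabla}\tilde{f}_p(x)$ for some $p$ and then invoking uniqueness of geodesics with given initial condition.
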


\begin{proof}
Consider an integral curve $\gamma$ of~$\tilde{\nabla} \tilde{f}_p$.
By construction, the curve~$\gamma$ is parametrized by its $\tilde{F}$-arclength and $d\tilde{f}_p(\gamma'(t)) = 1$ for every $t \in [a,b]$.
Thus,
\[
b-a = \int_a^b d\tilde{f}_p(\gamma'(t)) \, dt = \tilde{f}_p(\gamma(b)) - \tilde{f}_p(\gamma(a)) \leq b-a
\]
since $\tilde{f}_p$ is $\tilde{F}$-nonexpanding.
This implies 
\[
d_{\tilde{F}}(\gamma(b),\gamma(a)) = \tilde{f}_p(\gamma(b)) - \tilde{f}_p(\gamma(a)) = b-a
\]
Hence, the arc~$\gamma$ is a minimizing $\tilde{F}$-geodesic on~$D$.

Conversely, let $\gamma_v$ be the $\tilde{F}$-geodesic induced by some $\tilde{F}$-unit tangent vector~$v \in T_x D$ based at~$x$.
From the condition~\eqref{env1}, there exists a (unique) $p \in \partial D$ such that $v$ agrees with~$\tilde{\nabla} \tilde{f}_p$ at~$x$.
The integral curve of~$\tilde{\nabla} \tilde{f}_p$ passing through~$x$ is an $\tilde{F}$-geodesic with the same initial condition~$v$ as the $\tilde{F}$-geodesic~$\gamma_v$.
Therefore, the two $\tilde{F}$-geodesics agree on~$D$.
\end{proof}

\medskip

Let~$\tilde{\gamma}$ be an $\tilde{F}$-geodesic arc of~$D$.
Since~$\tilde{\gamma}$ is $\tilde{F}$-minimizing, it leaves~$D$ through two points~$x$ and~$y$ in~$\partial D$.

\begin{proposition} \label{prop:geod}
Let $\gamma$ be the $F$-geodesic arc of~$D$ with the same endpoints as~$\tilde{\gamma}$.
Then, the arcs $\gamma$ and~$\tilde{\gamma}$ satisfy $\ell_{\tilde{F}}(\tilde{\gamma}) = \ell_F(\gamma)$ and have the same tangent vectors at their endpoints $x$ and~$y$.
\end{proposition}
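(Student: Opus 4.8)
The plan is to exploit that $\tilde f$ and $f$ coincide on $\partial D \times U$: although $\tilde F$ and $F$ differ in the interior of $D$, they share the same ``boundary data'' at the endpoints $x,y\in\partial D$, and this will force both the lengths and the endpoint directions to agree.

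First I would prove the length equality. Since $D$ has radius less than $\tfrac14\inj(M)$, the $F$-geodesic arc $\gamma$ from $x$ to $y$ is the unique minimizing $F$-geodesic between them, so $\ell_F(\gamma)=d_F(x,y)$; by the preceding proposition $\tilde\gamma$ is $\tilde F$-minimizing, so $\ell_{\tilde F}(\tilde\gamma)=d_{\tilde F}(x,y)$. The distance formulas \eqref{eq:dF} and \eqref{eq:dFF} write $d_F(x,y)$ and $d_{\tilde F}(x,y)$ as suprema over $q\in\partial D$ of $f_q(x)-f_q(y)$ and $\tilde f_q(x)-\tilde f_q(y)$, respectively. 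Since $x,y$ lie in the closure of $U$ and $\tilde f_q=f_q$ there for every $q$, the two suprema coincide, whence $d_{\tilde F}(x,y)=d_F(x,y)$ and $\ell_{\tilde F}(\tilde\gamma)=\ell_F(\gamma)$.

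For the tangent vectors I would show that both geodesics are driven, near their endpoints, by the gradient field of one and the same $f_p$. Put $\phi(q)=f_q(y)-f_q(x)$; by the above together with the symmetry \eqref{env2} and \eqref{eq:dF} one has $\max_{q\in\partial D}\phi=d_F(x,y)$. The crucial point is that $\phi$ has a \emph{unique} maximizer $p$. Indeed, since $f_q$ is $F$-nonexpanding we have $\phi(q)=\int df_q(\gamma')\le\int F(\gamma')=\ell_F(\gamma)=\max\phi$ along the unit-speed $\gamma$, with equality if and only if $df_q(\gamma'(t))=F(\gamma'(t))=1$ for all $t$, i.e. if and only if $\gamma'(t)=\nabla f_q(\gamma(t))$. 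Thus $q$ maximizes $\phi$ exactly when $\gamma$ is an integral curve of the $F$-gradient $\nabla f_q$; and by the enveloping condition \eqref{env1} the single vector $\gamma'(x)$ determines $\LL(\gamma'(x))=df_q(x)$, hence $q$, uniquely. So there is a unique maximizer $p$, for which $\gamma$ follows $\nabla f_p$ throughout $D$.

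It then remains to identify $\tilde\gamma$ with the same $p$. By the preceding proposition $\tilde\gamma$ is the integral curve of some $\tilde\nabla\tilde f_{p'}$; since $d\tilde f_{p'}$ equals $1$ on its own gradient, $\phi(p')=\tilde f_{p'}(y)-\tilde f_{p'}(x)=\ell_{\tilde F}(\tilde\gamma)=\max\phi$ (using again $\tilde f=f$ at the endpoints), so $p'$ also maximizes $\phi$ and therefore $p'=p$. Finally, on $U$ the two metrics and functions agree, so $\tilde\nabla\tilde f_p=\nabla f_p$ there; evaluating at the endpoints $x,y$ gives $\tilde\gamma'(x)=\nabla f_p(x)=\gamma'(x)$ and $\tilde\gamma'(y)=\nabla f_p(y)=\gamma'(y)$, as claimed. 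I expect the main obstacle to be precisely the uniqueness of the maximizer $p$, that is, the equality-case analysis of $df_q\le F$ combined with the use of \eqref{env1} to recover $q$ from a tangent direction; once $p$ is seen to be common to both variational problems, matching the gradient fields on $U$ is immediate.
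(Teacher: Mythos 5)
Your proof is correct, and while your length argument is exactly the paper's (both arcs are minimizing, and $f=\tilde f$ near $\partial D$ forces $d_F=d_{\tilde F}$ on $\partial D\times\partial D$ via \eqref{eq:dF} and \eqref{eq:dFF}), your treatment of the tangent vectors takes a genuinely different route. The paper extends $\gamma$ slightly beyond $y$ to an exterior point $y_+$, observes that $z\mapsto d_F(x,z)+d_F(z,y_+)$ is minimized on $\partial D$ at $z=y$, and then applies a first-variation argument to the equal quantity $d_{\tilde F}(x,z)+d_{\tilde F}(z,y_+)$ to conclude that the concatenation $\tilde\gamma\cup\alpha$ is smooth at $y$, hence the tangents agree. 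You instead stay entirely inside the enveloping-function formalism: you show that the functional $\phi(q)=f_q(y)-f_q(x)$ has a unique maximizer $p$ (the equality case of $df_q\le F$ along $\gamma$, combined with strict convexity giving $df_q(x)=\mathcal{L}(\gamma'(x))$ and injectivity of $q\mapsto df_q(x)$ from \eqref{env1}), that $\gamma$ follows $\nabla f_p$ and $\tilde\gamma$ follows $\tilde\nabla\tilde f_{p'}$ with $p'$ also maximizing $\phi$, hence $p'=p$, and finally that the two gradient fields coincide on the collar $U$. Your approach buys something extra: by uniqueness of integral curves of the common field $\nabla f_p=\tilde\nabla\tilde f_p$ on $U$, the arcs $\gamma$ and $\tilde\gamma$ actually coincide throughout the collar, not merely at the level of endpoint tangents; it also replaces the paper's somewhat terse first-variation step by an explicit equality-case analysis. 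The paper's argument, in exchange, is purely metric and never needs uniqueness of the maximizer. Two small points you should make explicit: the choice of $p'$ versus $-p'$ (use the symmetry \eqref{env2} to orient $\tilde\gamma$ from $x$ to $y$ as an integral curve, matching the orientation convention in $\phi$), and the fact that the gradient fields, defined on the open ball $D$, extend continuously to the endpoints $x,y\in\partial D$ (e.g.\ because the radius bound $\tfrac14\inj(M)$ keeps the defining distance functions smooth near $\bar D$), so that ``evaluating at the endpoints'' is legitimate.
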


\begin{proof}
Let us show this latter statement holds for the tangent vectors of~$\gamma$ and~$\tilde{\gamma}$ at~$y$ (and so at~$x$, by symmetry).
Fix a point~$y_+$ close to~$y$ lying slightly outside~$\bar{D}$ on the $F$-geodesic extension of~$\gamma$.
Denote by~$\alpha$ the $F$-minimizing arc (lying on the $F$-geodesic extension of~$\gamma$) joining~$y$ to~$y_+$, \cf~Figure~\ref{fig:geod}.
The arc~$\alpha$ is also minimizing for~$\tilde{F}$ since $\tilde{F}=F$ in the neighborhood of~$y$. 

\medskip

\begin{figure}[htb] 
\def\svgwidth{4.5cm} 
\hspace{1cm} 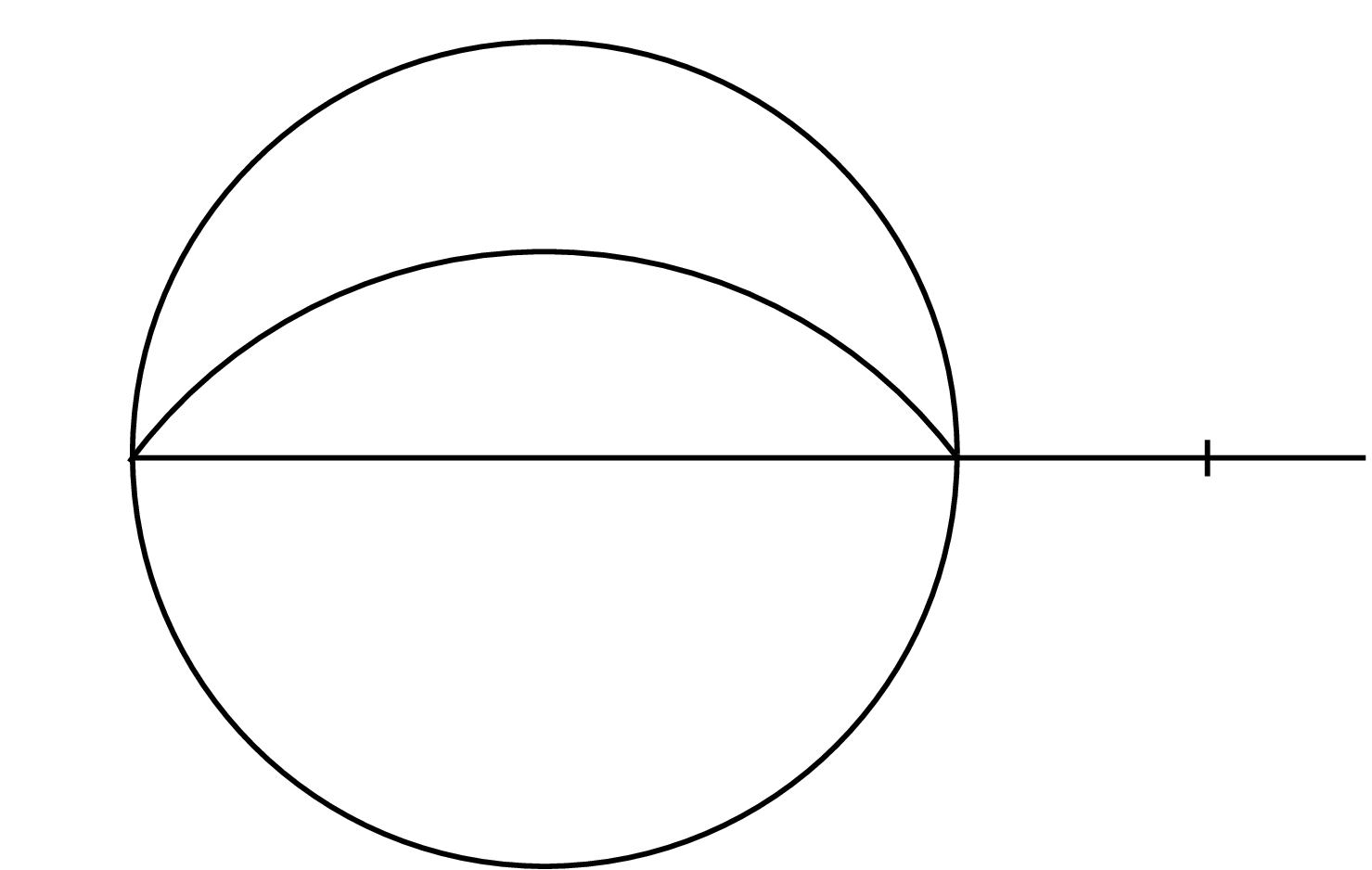
\caption{Geodesics of~$D$ for $F$ and~$\tilde{F}$} \label{fig:geod}
\end{figure}

Since $f$ and~$\tilde{f}$ agree on~$\partial D \times U$, we deduce from the expression of~$d_F$ and~$d_{\tilde{F}}$, \cf~\eqref{eq:dF} and~\eqref{eq:dFF}, that $d_F=d_{\tilde{F}}$ not only for pairs of points in a neighborhood of~$y$ but on~$\partial D \times \partial D$.
Thus, for every $z \in \partial D$ close enough to~$y$, we have
\[
d_F(x,z) + d_F(z,y_+) = d_{\tilde{F}}(x,z) + d_{\tilde{F}}(z,y_+).
\]
The infimum of the left-hand side of this equation over such~$z$ is attained for~$z=y$.
By a first variation argument applied to the right-hand side of the equation, we deduce that $\tilde{\gamma} \cup \alpha$ is smooth.
(Recall that $\tilde{\gamma}$ is an $\tilde{F}$-minimizing arc joining $x$ to~$y$, and that $\alpha$ is an $\tilde{F}$-minimizing arc joining $y$ to~$y_+$.)
Thus, the unit tangent vectors of~$\tilde{\gamma}$ at~$y$ is the same as the unit tangent vector of~$\alpha$ (and so~$\gamma$) at~$y$.

Now, since $\gamma$ and~$\tilde{\gamma}$ are minimizing with respect to~$F$ and~$\tilde{F}$, the relation~$d_F = d_{\tilde{F}}$ on~$\partial D \times \partial D$ also implies
\[
\ell_{\tilde{F}}(\tilde{\gamma}) = d_{\tilde{F}}(x,y) = d_F(x,y) = \ell_F(\gamma).
\]
\end{proof}

Suppose now that $F$ is a Zoll Finsler metric and that, in addition, the radius of~$D$ is small enough so that the geodesics of~$F$ pass at most once through~$D$.
We derive from Proposition~\ref{prop:geod} that the geodesics of~$\tilde{F}$ are closed and agree with those of~$F$ outside~$D$.
Furthermore, they are simple and have the same length as those of~$F$.
In other words, $\tilde{F}$ is also a Zoll Finsler metric with the same geodesic length as~$F$. \\

To make sure the Finsler metrics are not pairwise isometric, we can proceed as follows.
Consider the Mahler product volume of a two-dimensional centrally symmetric convex body~$B$ in an affine plane~$E$
\[
\PP(B) = \vol(B \times B^*)
\]
where $\vol$ is the volume induced by the canonical symplectic form on~$E \times E^*$.
The Mahler product volume is an affine invariant of~$B$ which satisfies the sharp lower bound $\PP(B) \geq 8$ in dimension two, with equality if and only if $B$ is a parallelogram, \cf~\cite[Theorem~2.3.4]{thompson}.

Let $B$ be an $n$-dimensional centrally symmetric convex body in an affine $n$-space.
Define the affine planar content of~$B$ as
\[
c(B) = \min_P \PP(B \cap P)
\]
where $P$ runs over the planes passing through the center of~$B$.
The affine planar content~$c(B)$ is an affine invariant of~$B$ which satisfies $c(B) \geq 8$.
Note that this sharp lower bound is never attained by a quadratically convex body.

To produce an infinite-dimensional family of \emph{non-isometric} Zoll Finsler metrics as above, we perturb the metric~$F$ in a neighborhood of a point~$x_0$ such that $c(B_{x_0}^F) \leq c(B_x^F)$ for every $x \in M$, where $B_x^F$ is the $F$-unit tangent ball in~$T_x M$.
Let $B_{x_0}^F \cap P$ be a planar section of~$B$ with minimal Mahler product volume.
Fix $p_0 \in \partial D$ such that the $F$-gradient of~$f_{p_0}$ at~$x_0$, namely~$\nabla f_{p_0}(x_0)$, lies in~$P$.
We can perturb $f:\partial D \times D \to \R$ in the neighborhood of~$(p_0,x_0)$ (and~$(-p_0,x_0)$ to respect the symmetry condition~\eqref{env2}) through a family of enveloping functions~$\tilde{f}_\tau$ which agree with~$f$ in the neighborhood of~$\partial D \times \partial D$ and such that the content $c(\tilde{F}_\tau) := \min_{x \in M} c(B_x^{\tilde{F}_\tau})$ of the Finsler metric~$\tilde{F}_\tau$ induced by~$\tilde{f}_\tau$ decreases with~$\tau$.
Since the content of a Finsler metric is preserved by isometries, this construction provides a family~$\tilde{F}_\tau$ of non-isometric Zoll Finsler metrics on~$M$ with the same geodesic length.

\begin{remark}
Note that we would derive the same result by considering other affine invariants for centrally symmetric convex bodies.
We decided to work with the affine planar content because its minimum is never attained by a quadratically convex body and because it is not difficult to decrease its value through small perturbations.
\end{remark}

\end{document}